\numberwithin{equation}{section}
\newtheorem{theorem}{Theorem}[section]
\newtheorem{proposition}[theorem]{Proposition}
\newtheorem{corollary}[theorem]{Corollary}
\newtheorem{lemma}[theorem]{Lemma}
\theoremstyle{definition}
\newtheorem{definition}[theorem]{Definition}
\newtheorem{remark}[theorem]{Remark}
\newtheorem{conjecture}[theorem]{Conjecture}
\long\def\forget#1\forgotten{}
\newcommand{\1}{\mathds{1}}
\newcommand{\Z}{{\mathbb Z}}
\newcommand{\R}{\mathbb{R}}
\newcommand{\N}{\mathbb{N}}
\newcommand{\f}{\varphi}
\newcommand{\e}{\varepsilon}
\newcommand{\M}{\mathcal{M}}
\newcommand{\F}{\mathcal{F}}
\renewcommand{\L}{\mathcal{L}}
\newcommand{\supp}{\mathrm{supp}}
\newcommand{\conv}{\mathrm{Conv}}
\newcommand{\dom}{\mathrm{dom}}
\begin{document}
\title{On Mahler's conjecture for even s-concave functions 
in dimensions 1 and 2.}
\author{Matthieu Fradelizi, Elie Nakhle}

\subjclass[2020]{Primary 52A40, 52A20}
\keywords{Mahler's conjecture, functional form, volume product, s-polarity, equipartition, s-concave functions}

%\subjclass[2010]{52A20, 52A40,  53A15, 52B10.}
%\keywords{Mahler conjecture, Blaschke-Santal\'o inequality, functional form, Legendre transform, equipartitions, log-concave functions}

%\nnfootnote{Keywords: Mahler conjecture, Blaschke-Santal\'o inequality, functional form, Legendre transform, equipartitions, s-concave functions\\Mathematics Subject Classification 2020 - 52A20, 52A40,  53A15, 52B10}

\begin{abstract}
In this paper, we establish different sharp forms of Mahler's conjecture for $s$-concave  even functions in dimensions $n$, for $n=1$ and $2$, for $s>-1/n$, thus generalizing our previous results in \cite{FN} on log-concave even functions in dimension 2, which corresponds to the case $s=0$.
The functional volume product of an even $s$-concave function $g$ is 
\[
\int_{\R^{n}}g(x)dx\int_{\R^{n}}\L_{s}g(y)dy,
\]
where $\L_{s}g$ is the $s$-polar function associated to $g$. The analogue of Mahler's conjecture for even $s$-concave functions postulates that this quantity is minimized for the indicatrix of a cube for any $s>-1/n$.
In dimension $n=1$, we prove this conjecture for all $s\in(-1,0)$ (the case $s\ge0$ was established by the first author and Mathieu Meyer in \cite[page 17]{FM10}). In dimension $n=2$, we consider the case $1/s\in\Z$: for $s>0$, we establish Mahler's conjecture for general $s$-concave even functions; for $s<0$, we prove a sharp inequality for $s$-concave functions $g$ such that $g^s$ admits an asymptote in every direction. Notice that this set of functions is quite natural to consider, when $s<0$, since it is the largest subset of $s$-concave functions stable by $s$-duality. 
\end{abstract}
\maketitle

\section{Introduction.}
The classical Blaschke-Santal\'o inequality gives the following sharp relation between the volume of a centrally symmetric convex body $K$ in~$\R^{n}$ and the volume of its polar body $K^{\circ}$: 
$$P(K)=|K||K^{\circ}|\leq P(B_{2}^{n}),$$
where $B_{2}^{n}$ is the Euclidean ball of radius one, $K^{\circ}=\{y\in\R^{n}, \langle x,y\rangle\leq 1, \forall x\in K\}$ is the polar body of $K$ in $\R^{n}$ and $|A|$ stands for the Lebesgue measure of a Borel subset~$A$ of $\R^{n}$.\\

One of the well-known problems in the field of geometry of convex bodies is an inverse form of the Blaschke-Santal\'o inequality, namely Mahler's conjecture. This conjecture states that the product of the volume of a centrally symmetric convex body $K$ and the volume of its polar body~$K^{\circ}$, should be minimized by the cube, that is, 
$$
P(K)\geq P([-1,1]^{n})=\frac{4^{n}}{n!}.
$$
This conjecture was stated in 1938 by Mahler \cite{Mah2}. 
%It was later conjectured that the equality case occurs if and only if $K$ is a Hanner polytope, which is a centrally symmetric polytope obtained by repeatedly taking the Cartesian product or the $l_{1}$-sum of segments. 
The inequality in dimension two was established by Mahler himself \cite{Mah1}, the $3$-dimensional inequality and the equality case were proved by Iriyeh and Shibata \cite{IS}, a shorter proof can be found in \cite{FHMRZ}. The conjecture was also proved for several particular families of convex bodies like unconditional convex bodies by Saint Raymond \cite{SR,M}, zonoids by Reisner \cite{Re,GMR}, hyperplane sections of $B_{p}^{n}=\{x\in\R^{n};\sum |x|_{i}^{p}\leq 1\}$ by Karasev \cite{K}.\\

Functional forms of Mahler's conjecture have also been studied, where the centrally symmetric convex body is replaced with an even $s$-concave function and the polar body is replaced with the  suitable $s$-polar of this function. 
%Let $g:\R^{n}\to\R_{+}$ be an even log-concave function. This means that for some even convex function $\varphi:\R^{n}\to\R\cup\{+\infty\}$, one has 
More precisely, let us recall the definition of $s$-concavity.

\begin{definition}
Let $s\in\R$. A function $g:\R^{n}\to\R_{+}$ is $s$-concave, if for every $\lambda\in[0,1]$, and every $x$, $y\in\R^{n}$ such that $g(x)g(y)>0$, one has
$$
g((1-\lambda)x+\lambda y)\geq( (1-\lambda)g(x)^{s}+\lambda g(y)^{s})^{\frac{1}{s}}, \quad \hbox{for}\ s\neq0, 
$$
and $g((1-\lambda)x+\lambda y)\geq g(x)^{1-\lambda}g(y)^\lambda$, for $s=0$.
\end{definition}
Let $g:\R^{n}\to\R_{+}$ be an $s$-concave function not identically zero. Notice that for $s>0$, it means that $g^{s}$ is concave on its support, while for $s<0$, it means that $g^{s}:\R^n\to\R_+\cup\{+\infty\}$ is convex.
 For $s$-concave functions, the $s$-duality takes the following form. We define, for every $y\in\R^{n}$, 
$$
\L_{s}g(y):=\inf_{x\in\R^{n}}\frac{\left(1-s\langle x,y\rangle\right)_{+}^{\frac{1}{s}}}{g(x)},\quad \hbox{for}\ s\neq0,\  \hbox{and}\quad  \L_{0}g(y)=\inf_{x\in\R^{n}}\frac{e^{-\langle x,y\rangle}}{g(x)},
$$
where the infimum are considered on the set $\{x\in\R^{n}; g(x)>0\}$. Note that $\L_{s}g$ is $s$-concave and that $\L_{0}$ can also be described using the Legendre transform: for any function $\f:\R^n\to\R\cup\{+\infty\}$, one has $\L_{0}(e^{-\f})=e^{-\f^*}$, where $\f^*(y)=\sup_x(\langle x,y\rangle-\f(x))$ denotes the Legendre transform of $\f$.

%We define the $s$-functional volume product of an $s$-concave function $g$ to be
%\[\int_{\R^{n}}g(x)dx\int_{\R^{n}}\L_{s}g(y)dy.\]
The following conjecture is the natural analogue of Mahler's conjecture for even $s$-concave functions. 
\begin{conjecture}\label{mahler-s}
Let $s>-\frac{1}{n}$ and $g:\R^{n}\to\R_{+}$ be an even $s$-concave function such that $0<\int g<+\infty$. Then,
$$
\int_{\R^{n}}g(x)dx\int_{\R^{n}}\L_{s}g(y)dy\geq\frac{4^{n}}{(1+s)\cdots(1+ns)},
$$
with equality for $g=\1_{[-1,1]^n}$.
\end{conjecture}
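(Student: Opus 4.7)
The approach mirrors \cite{FN}, which handled $s=0$ in dimension 2: first establish a sharp 1-dimensional $s$-Mahler inequality, and then in dimension 2 reduce to it via a symmetric equipartition of $\R^2$ into four cones of equal $g$-mass. A unifying tool when $1/s\in\Z$ is the Ball-type body, which embeds $g$ as a convex set of dimension $n+1/|s|$: for $s>0$, $K_s(g)=\{(x,z)\in\R^n\times\R^{1/s}:\|z\|_2\le g(x)^s\}$ is an even convex body with $|K_s(g)|=\omega_{1/s}\int g$; for $s<0$ the analogue is an unbounded convex set whose finite volume and well-behaved $s$-duality are exactly governed by the existence of an asymptote of $g^s$ in every direction. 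In both cases $s$-polarity of $g$ corresponds to geometric polarity of $K_s(g)$ up to a normalization, so the functional volume product is proportional to a body volume product in one higher-dimensional (generalised) setting.

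\emph{Dimension 1.} The case $s\ge 0$ is in \cite{FM10}; the new content is $s\in(-1,0)$. Evenness gives $g(x)=h(|x|)$ with $h^s$ convex, and analysing the sign of $(1-sxy)_+^{1/s}$ for $1/s<0$ yields $\L_s g(y)=\tilde h(|y|)$ with $\tilde h(y)=\inf_{x>0}(1-sxy)_+^{1/s}/h(x)$. Testing on $g=\1_{[-1,1]}$ gives $\L_s g(y)=(1+|s||y|)^{-1/|s|}$ and product $4/(1+s)$, so equality is attained. To identify this as a minimum, the plan is either a direct perturbation argument around the candidate extremizer, or a layer-cake approximation of $g$ by sums of indicators $\1_{[-a_i,a_i]}$ that reduces the problem to a finite-dimensional convex minimization, extremizers of which are forced to be indicators of symmetric intervals by a Schwarz-type rearrangement; an alternative route uses the (possibly fractional) Ball-body embedding in ``dimension'' $1+1/|s|$, interpreted radially, to recast the problem as a sharp radial Mahler-type inequality.

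\emph{Dimension 2 and main obstacle.} With $1/s=k\in\Z$, the $O(k)$-symmetry of $K_s(g)$ in its last $k$ coordinates reduces the Mahler problem in $\R^{2+k}$ to a weighted inequality on $\R^2$. This is handled by the equipartition scheme of \cite{FN}: choose two lines through the origin cutting $\R^2$ into four cones of equal $g$-mass (available by evenness and continuity), observe that $s$-polarity induces a matching equipartition on $\{\L_s g>0\}$, apply the sharp 1-dimensional inequality cone by cone, and sum. Simultaneous equality in the four cones then forces $g=\1_{[-1,1]^2}$ up to a linear change of coordinates. The main technical difficulty is the $s<0$ case: even under the asymptote hypothesis $g$ has unbounded support, so the per-cone 1-dimensional inequality must be formulated for functions supported on a whole ray, and the equality profiles in the four cones must be globally compatible in order to rule out non-cube extremizers. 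This compatibility is the step requiring most care, and it relies on a detailed analysis of how $g^s$ meets its asymptote on each cone boundary --- a feature absent from the log-concave setting of \cite{FN}.
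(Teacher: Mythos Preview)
The statement is Conjecture~\ref{mahler-s}, proved in the paper only in special cases ($n=1$; and $n=2$ with $1/|s|\in\N$, restricted to $g^s\in\F$ when $s<0$). Your plan for $n=2$ diverges from the paper's and has a real gap. You propose to embed $g$ as a body $K_s(g)\subset\R^{2+k}$, use its $O(k)$-symmetry to reduce to a weighted problem on $\R^2$, then equipartition $\R^2$ into four cones of equal $g$-mass and apply a $1$-dimensional bound cone by cone as in \cite{FN}. The paper does something structurally different: writing $g=f^m$ it builds $K_{\infty,m}(f)=\{(x,t)\in\R^2\times\R^m:\|t\|_\infty\le f(x)\}$ with the \emph{cube} $B_\infty^m$ (not the Euclidean ball), and applies the equipartition criterion of \cite{FHMRZ} (Proposition~\ref{prop:Mahler}) \emph{to the $(m{+}2)$-dimensional body itself}, cut by the $m{+}2$ coordinate hyperplanes. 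The sections $K_{\infty,m}(f)\cap e_i^\perp$ are either $K_{\infty,m}(f_{|e_i^\perp})$ for $i\le2$ (reducing to the $1$-dimensional Theorem~\ref{dim1}) or $K_{\infty,m-1}(f)$ for $i\ge3$ (induction on $m$). The volume hypotheses of Proposition~\ref{prop:Mahler} force $f$ to be $m$-equipartitioned --- equal quarter-integrals of \emph{both} $f^m$ and $f^{m-1}$ --- not just the single $g$-mass you propose; the second measure is exactly what controls the sections for $i\ge3$. Unconditionality of $B_\infty^m$ is what makes the coordinate equipartition automatic in the fibre variables, which is why the cube is used and the $O(k)$-reduction is bypassed entirely. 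Your key assertion that ``$s$-polarity induces a matching equipartition on $\{\L_s g>0\}$'' is unproved and is precisely the step the paper's route avoids by working upstairs.

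For $n=1$ with $s\in(-1,0)$, your three sketches (perturbation, layer-cake, fractional Ball body) are too vague to evaluate and none matches the paper. The actual argument is a two-step reduction: first prove the bipolar comparison $\int g\ge(1+s)\int\L_s\L_s g$ (Theorem~\ref{comp-bipolar-one}), which is recast as $\mu_m(C_1(f))\ge\frac{m-1}{m}\,\mu_m(\conv C_1(f))$ for the planar body $C_1(f)$ with weight $\frac{m}{2}|t|^{m-2}$ and settled by an elementary comparison of a triangle to its enclosing rectangle; then multiply by $\int\L_s g$ and invoke the $\F$-case from \cite{FGSZ} applied to $\L_s g$, which automatically has $(\L_s g)^s\in\F$. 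You do not identify this structure. Finally, for $n=2$ and $s<0$ the paper establishes only Conjecture~\ref{mahler-s-F} (the $\F$-restricted inequality with the larger constant $16/((1+s)(1+2s)^2)$), not Conjecture~\ref{mahler-s} itself --- a distinction your proposal blurs.
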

The conjecture has been proved for $s=0$ and $g$ being unconditional in \cite{FM08a, FM08b}. Recall that a function is unconditional if $g(|x_{1}|,...,|x_{n}|)=g(x_{1},...,x_{n})$ for every $(x_{1},...,x_{n})\in\R^{n}$. It has also been established for $s>0$ and $n=1$ in \cite[page 17]{FM10}; for $s>0$ and $g$ unconditional in \cite{BF,FGSZ} and for $s=0$ and $n=2$ in \cite{FN}. 

According to the sign of $s$, there are two ways of reformulating Conjecture \ref{mahler-s}. If $s>0$, then it is equivalent to finding the exact lower bound for the quantity 
\[
\int_{\R^{n}}f(x)^{m}dx\int_{\R^{n}}(\L_1 f(y))^{m}dy,
\]
where $f$ is an even concave function such that $0<\int f^m<+\infty$. While for $s<0$, Conjecture~\ref{mahler-s} is equivalent to finding the exact lower bound for 
\[
\int_{\R^{n}}\frac{1}{f(x)^{m}}dx\int_{\R^{n}}\frac{1}{(\M f(y))^{m}}dy,
\]
where $f:\R^n\to (0,+\infty]$ is even, convex, $f$ goes to infinity at infinity and for every $y\in\R^{n}$
\[
\M f(y)=\sup_{x\in\R^{n}}\frac{1+\langle x,y\rangle}{f(x)}.
\]

In our first main result, we prove Conjecture~\ref{mahler-s} in the case where  $1/s\in\N$, and in dimension $n=2$.
\begin{theorem}\label{thm:spos}
    Let $s>0$ be such that $1/s\in\N$ and let $g:\R^{2}\to\R_{+}$ be an even $s$-concave function such that $0<\int g<+
    \infty$. Then,
    \[
    \int_{\R^{2}}g(x)dx\int_{\R^{2}}\L_{s}g(y)dy\geq\frac{16}{(1+s)(1+2s)},
    \]
with equality for $g=\1_{[-1,1]^2}$.
\end{theorem}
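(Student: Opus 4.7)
The plan is to reduce Theorem~\ref{thm:spos} to a geometric Mahler-type inequality in $\R^{k+2}$, where $k := 1/s \in \N$, via the classical ``Ball body'' lifting of $s$-concave functions. Writing $f := g^s$, so that $f$ is a non-negative even concave function supported on the centrally symmetric convex body $K_0 := \supp(g) \subset \R^2$, I set
\[
L_g \; := \; \bigl\{(x,y) \in \R^2 \times \R^k : x \in K_0,\; \|y\|_\infty \leq f(x)\bigr\}.
\]
Concavity of $f$ and evenness of $g$ ensure that $L_g$ is a centrally symmetric convex body in $\R^{k+2}$; moreover it admits the $k$ hyperplane reflection symmetries $\{y_i = 0\}$, $i = 1, \ldots, k$.

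A Fubini computation in the $y$-variables yields $|L_g| = 2^k \int g$. Computing the polar gives
\[
L_g^\circ \; = \; \bigl\{(y,z) \in \R^2 \times \R^k : \|z\|_1 \leq \inf_{x \in K_0,\, f(x) > 0} (1 - \langle x, y\rangle)_+/f(x)\bigr\},
\]
and performing the substitution $y \mapsto y/s$ (which recovers $\L_s g$) together with a Fubini in $z$, one finds $|L_g^\circ| = (s^2 \cdot 2^k / k!) \int \L_s g$. Combining,
\[
\int g \cdot \int \L_s g \; = \; \frac{k!}{s^2 \cdot 4^k} \, |L_g| \cdot |L_g^\circ|,
\]
and since $s = 1/k$, the claim of Theorem~\ref{thm:spos} becomes exactly the geometric Mahler inequality for $L_g$:
\[
|L_g| \cdot |L_g^\circ| \; \geq \; \frac{4^{k+2}}{(k+2)!}.
\]

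To prove this, I would iterate the hyperplane reduction lemma underlying Saint Raymond's proof of Mahler for unconditional bodies: if $K \subset \R^n$ is a centrally symmetric convex body symmetric with respect to a hyperplane $H$ through the origin, then
\[
|K| \cdot |K^\circ| \; \geq \; \frac{4}{n} \, |K \cap H| \cdot |K^\circ \cap H|.
\]
Applied successively along $\{y_k = 0\}, \ldots, \{y_1 = 0\}$, and observing that each intermediate slice is again a Ball body for the same $g$ but with one fewer $y$-coordinate, this reduces the problem to the two-dimensional slice $L_g \cap \bigcap_i \{y_i = 0\} = K_0$ and the polar section $K_0^\circ$, losing the multiplicative factor $\prod_{j=3}^{k+2}(4/j) = 2 \cdot 4^k / (k+2)!$. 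Mahler's two-dimensional inequality \cite{Mah1} then gives $|K_0| \cdot |K_0^\circ| \geq 8$, and multiplying out yields $|L_g| \cdot |L_g^\circ| \geq 4^{k+2}/(k+2)!$ as required.

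The main obstacle is the hyperplane reduction lemma itself: its proof reduces by Fubini to a Mahler-type inequality for the concave ``height function'' of $K$ in the direction normal to $H$, which can be viewed as a one-dimensional Mahler-type bound. Extremality is attained on cylinders $(K \cap H) \times [-1,1]$, and tracing equality through the $k$-fold iteration shows that the extremizer of Theorem~\ref{thm:spos} is the lift $L_{\1_{[-1,1]^2}} = [-1,1]^{k+2}$, which matches the sharpness stated in the theorem. The delicate technical point is verifying the sharp constant in the reduction lemma and controlling boundary effects where $f$ vanishes.
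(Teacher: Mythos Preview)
Your lifting to the Ball body $L_g=K_{\infty,k}(f)\subset\R^{k+2}$ and the reduction of the theorem to $P(L_g)\ge 4^{k+2}/(k+2)!$ are exactly what the paper does. The gap is in how you propose to prove that Mahler inequality for $L_g$.

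The ``hyperplane reduction lemma'' you invoke, namely
\[
|K|\,|K^\circ|\ \ge\ \frac{4}{n}\,|K\cap H|\,|K^\circ\cap H|
\]
for a centrally symmetric $K$ with a single hyperplane of symmetry $H$, is \emph{not} the lemma underlying Saint Raymond's or Meyer's proof. Meyer's argument for unconditional bodies uses all $n$ coordinate symmetries simultaneously; no result of this strength is known under a single reflection. Your justification---that the proof ``reduces by Fubini to a one-dimensional Mahler-type bound for the height function''---confuses the codimension of $H$ with the dimension of the domain of the height function: writing $K=\{|t|\le h(x')\}$, the function $h$ lives on $\R^{n-1}$, and the required inequality is $\int h\int\L h\ge\frac1n|\supp h||(\supp h)^\circ|$, a genuinely $(n-1)$-dimensional statement that is in fact \emph{stronger} than the functional Mahler conjecture in that dimension (since $|\supp h||(\supp h)^\circ|$ can exceed the Mahler minimum). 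Concretely, in your iteration the first slice of $L_g$ by $\{y_k=0\}$ has height function $h(x,y')=f(x)\1_{\{\|y'\|_\infty\le f(x)\}}$ on $\R^{k+1}$, and your lemma there is equivalent to $P_k(f)\ge\frac{k}{k+2}P_{k-1}(f)$ for even concave $f$ on $\R^2$---which, upon iteration down to $P_0(f)$ and Mahler in the plane, is precisely the theorem you are trying to prove. So the argument is circular.

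The paper avoids this by using not $k$ but $k+2$ hyperplanes. The extra two come from an equipartition lemma: after a linear change of variables one can arrange that both $\int_{\R^2_\varepsilon}f^k$ and $\int_{\R^2_\varepsilon}f^{k-1}$ are quartered by the coordinate axes in $\R^2$. This yields a full equipartition of $L_g$ by the $k+2$ coordinate hyperplanes, and Proposition~\ref{prop:Mahler} (the FHMRZ criterion) then applies, with the hyperplane sections handled by induction on $k$ for the $y$-directions and by the one-dimensional Theorem~\ref{dim1} for the $x$-directions. The equipartition step is the missing idea in your outline.
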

Notice that letting $s\to0$, we get a new proof of the log-concave case from \cite{FN}, though without the equality case, see Corollary \ref{cor:FN23}. \\

For $s<0$, the situation is more involved, because the $s$-dual $\L_s g$ of an integrable $s$-concave function $g$ is not only $s$-concave but, as observed in \cite{Ro, FGSZ}, the function $(\L_{s}g)^s$ has also asymptotes in every direction. For example, for $g=\1_{[-1,1]^n}$, one has $(\L_{s}g(y))^s=1-s\|y\|_1$, where $\|y\|_1=\sum_{i=1}^n|y_i|$, for $y=(y_1,\dots,y_n)$. More precisely, in general, one has $(\L_{s}g)^s\in\F$, where the set $\F$ is defined below. 

\begin{definition}\label{def:F}
$\F$ is the set of convex lower semi-continuous functions $f:\R^{n}\to (0,+\infty)$ such that, for any $x\neq 0$, one has $\lim_{t\to+\infty}f(tx)=+\infty$,  and $t\mapsto\frac{f(tx)}{t}$ is non-increasing on $(0,+\infty)$.
\end{definition}
Notice that if $g^s\in\F$, then it is not difficult to see that $0<\int g<+\infty$, see the inequality  \eqref{eq:gint} in section 2.
Moreover, it is easy to see that for any $g$ such that $g^s\in\F$, one has $\L_s\L_s g=g$, from which it follows that for any map $g$, one has $\L_s\L_s\L_s g=\L_sg$. 
As we shall see from the one-dimensional case, it is natural to formulate two more conjectures from which Conjecture \ref{mahler-s} follows in the case $s<0$. The first one postulates a sharp lower bound on the functional volume product of $s$-concave even functions, among the ones such that $g^s\in\F$.

\begin{conjecture}\label{mahler-s-F}
Let $s\in(-\frac{1}{n},0)$ and let $g:\R^{n}\to\R_{+}$ be even such that $g^s\in\F$. Then,
$$
\int_{\R^{n}}g(x)dx\int_{\R^{n}}\L_{s}g(y)dy\geq\frac{1}{(1+ns)}\times\frac{4^{n}}{(1+s)\cdots(1+ns)},
$$
with equality for $g(x)=(\max(1,\|x\|_\infty))^{1/s}$.
\end{conjecture}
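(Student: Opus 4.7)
The plan is to handle the two relevant low-dimensional cases separately, following the paper's treatment of Conjecture \ref{mahler-s}: dimension $n=1$ for all $s\in(-1,0)$, and dimension $n=2$ for $1/s\in\Z_{<0}$.

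\textbf{Dimension $n=1$.} Set $h:=g^s$. Then $h$ is even, convex, positive, with $t\mapsto h(t)/t$ non-increasing on $(0,\infty)$ and $h(t)\to+\infty$. Since $1/s<0$, the infimum defining $\L_s g$ converts to a supremum:
\[
\L_s g(y)=h^\circ(y)^{1/s},\qquad h^\circ(y)=\sup_{t\geq 0}\frac{1+|s|\,|y|\,t}{h(t)}.
\]
The product $\int g\cdot\int \L_s g$ is invariant under $g\mapsto\lambda g(\mu\cdot)$, so I would normalize $h(0)=1$ and the asymptotic slope $\lim_{t\to+\infty}h(t)/t=1$, forcing $h\geq\max(1,|\cdot|)$. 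Direct computation shows that both $h_0(t)=\max(1,|t|)$ and its $\circ$-dual $\widetilde h_0(t)=1+|s|\,|t|$ saturate the bound $4/(1+s)^2$, while generic interpolating shapes do not. To identify them as the only minimizers, I would approximate $h$ by piecewise linear functions with finitely many breakpoints (dense in the admissible class after normalization), reducing the problem to a finite-dimensional optimization in the breakpoint positions and slopes, and then analyze critical points via Lagrange multipliers; the $\circ$-duality $h\leftrightarrow h^\circ$ halves the effective parameter space and is exploited to discard spurious critical points.

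\textbf{Dimension $n=2$.} For $n=2$ with $1/s\in\Z_{<0}$, the extremal $g_0(x)=\max(1,\|x\|_\infty)^{1/s}$ is not a tensor product of one-dimensional factors, so a naive product decomposition fails. I would instead adapt the \emph{equipartition} scheme of \cite{FN}: choose two lines through the origin partitioning $\R^2$ into four cones with equal contributions to $\int g$, estimate $\int \L_s g$ over each polar cone by a one-dimensional reduction (via a polar-coordinate change of variables), and invoke the 1D case on each cone, using the $\circ$-duality between opposite cones to close the four estimates and recover the sharp constant $16/[(1+s)(1+2s)^2]$.

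\textbf{Main obstacle.} In dimension $1$, the nonlinearity of $h\mapsto h^\circ$ complicates variational analysis; the two distinct extremals $h_0,\widetilde h_0$ related by the $\circ$-duality force a careful case analysis of critical configurations. In dimension $2$, the central region $\{\|x\|_\infty\leq 1\}$, on which $g_0$ is constant, produces boundary terms in the equipartition estimate which do not arise in the log-concave analogue of \cite{FN}; faithfully tracking these terms to recover the extra factor $1/(1+ns)$ encoded by the $\F$-constraint is the crucial technical step.
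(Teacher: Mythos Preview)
Your plan diverges substantially from the paper's approach and, as written, does not constitute a proof in either dimension.

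\textbf{Dimension $n=1$.} The paper does not prove this case at all: it is quoted from \cite{FGSZ}, where the conjecture is established for unconditional functions (hence automatically in dimension one). Your variational sketch --- approximate by piecewise linear functions, then ``analyze critical points via Lagrange multipliers'' --- is not an argument: you have not explained why minimizers of the finite-dimensional problem converge to minimizers of the original, nor how the Lagrange analysis would actually single out $h_0$ and $\widetilde h_0$ among all critical configurations. The $\circ$-duality observation is correct but does not by itself close the argument.

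\textbf{Dimension $n=2$, $-1/s\in\N$.} Here the gap is structural. The paper (Theorem~\ref{th:m-neg-F-d2}) sets $m=-1/s$, $f=g^s$, and constructs the convex body
\[
C_{m-2}^{\infty}(f)=\overline{\left\{(x,t)\in\R^{2}\times\R^{m-2}\setminus\{0\}:\ \|t\|_\infty\, f\!\left(\tfrac{x}{\|t\|_\infty}\right)\le 1\right\}}\subset\R^{m},
\]
whose volume product equals $Q_m(f)$ up to an explicit factor. One then verifies that $C_{m-2}^{\infty}(f)$ satisfies the hypotheses of Proposition~\ref{prop:Mahler} (the FHMRZ equipartition criterion) by (i) $m$-equipartitioning $f$ in $\R^2$ via a linear change of variables, which equipartitions $C_{m-2}^{\infty}(f)$ in $\R^m$; (ii) checking that the coordinate sections are either $C_{m-2}^{\infty}(f_{|e_i^\perp})$ (handled by the $1$-dimensional result of \cite{FGSZ}) or $C_{m-3}^{\infty}(f)$ (handled by induction on $m$, with base case $m=3$ supplied by \cite{IS}). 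The extra factor $1/(1+ns)=m/(m-n)$ falls out of the volume computation for $C_{m-2}^\infty(f)$, not from any boundary term.

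Your proposal instead stays in $\R^2$, equipartitions into four cones, and attempts a ``one-dimensional reduction via a polar-coordinate change of variables'' on each cone. This is not the scheme of \cite{FN} (which also lifts to a higher-dimensional body), and you give no mechanism by which a polar decomposition would produce, on each cone, a $1$-dimensional $s$-concave problem to which the known case applies; the $s$-polarity $\L_s$ does not factor through polar coordinates in any evident way. The ``boundary terms'' you flag are a symptom of this: they are not present in the paper's argument because the lift absorbs the $\F$-constraint geometrically. Without the higher-dimensional construction and the FHMRZ proposition, your plan has no route to the sharp constant.
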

Notice that $(\max(1,\|x\|_\infty))^{1/s}=\L_s\L_s(\1_{[-1,1]^n})$.
In \cite{FGSZ}, the above conjecture was established if the function $g$ is unconditional. In particular, the conjecture holds for $n=1$. In our second main result, we establish Conjecture \ref{mahler-s-F} when $n=2$ and $-1/s\in\N$.

\begin{theorem}\label{th:mahler-s-F}
Let $s\in(-\frac{1}{2},0)$ be such that $-1/s\in\N$ and let $g:\R^{2}\to\R_{+}$ be even  such that $g^s\in\F$. Then,
$$
\int_{\R^{2}}g(x)dx\int_{\R^{2}}\L_{s}g(y)dy\geq\frac{16}{(1+s)(1+2s)^2},
$$
with equality for $g(x)=(\max(1,\|x\|_\infty))^{1/s}$.
\end{theorem}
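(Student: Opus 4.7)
The plan is to follow the architecture of the proof of Theorem~\ref{thm:spos} above, adapting it to the $s<0$ regime under the constraint $g^{s}\in\F$. Write $k:=-1/s\in\N$ (so $k\geq 3$), $f:=g^{s}\in\F$ and $h:=(\L_s g)^{s}\in\F$.

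The proof would begin with the planar equipartition used in the log-concave case~\cite{FN}: by a standard topological argument (of Borsuk--Ulam type) there exist two perpendicular lines through the origin that simultaneously split each of the measures $g(x)\,dx$ and $\L_s g(y)\,dy$ into four equal parts. Rotating coordinates I assume these lines are the coordinate axes, so that
\[
\int_{[0,\infty)^{2}}g=\frac{1}{4}\int_{\R^{2}}g\quad\text{and}\quad\int_{[0,\infty)^{2}}\L_s g=\frac{1}{4}\int_{\R^{2}}\L_s g,
\]
and it suffices to establish the first-quadrant estimate $\int_{[0,\infty)^{2}}g\cdot\int_{[0,\infty)^{2}}\L_s g\geq\tfrac{1}{(1+s)(1+2s)^{2}}$.

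The second step would reduce this quadrant inequality to two iterated applications of the one-dimensional case. The case $n=1$ of Conjecture~\ref{mahler-s-F} is known (it follows from the unconditional case in~\cite{FGSZ}, since in dimension one \emph{even} and \emph{unconditional} coincide) and provides, for every even $u:\R\to\R_{+}$ with $u^{s}\in\F$, the sharp bound $\int_{\R}u\cdot\int_{\R}\L_s u\geq 4/(1+s)^{2}$. I would apply this inequality twice. First, slice by slice in $x_1$ to $g(x_1,\cdot)$, producing a pointwise bound involving the $x_1$-marginal $G(x_1):=\int_{\R}g(x_1,x_2)\,dx_2$. Next, a Fubini/infimum-switching computation based on the identity
\[
\L_s g(y_1,y_2)=\inf_{x_1\in\R}(1-s x_1 y_1)_{+}^{1/s}\,\L_s^{(1)}\!\!\left[\frac{g(x_1,\cdot)}{(1-s x_1 y_1)_{+}^{1/s}}\right]\!(y_2),
\]
where $\L_s^{(1)}$ denotes the 1D $s$-polar acting on the second variable, would identify the $y_1$-marginal of $\L_s g$ (after renormalization) as a 1D $s/(1+s)$-polar of $G$, the shifted exponent $s/(1+s)$ being the one predicted by Borell's marginal theorem. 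A second application of the 1D inequality with parameter $s/(1+s)$ then yields, together with the slice bound and an explicit normalization factor from the Fubini step, the product $\frac{16}{(1+s)(1+2s)^{2}}$. The equality case of each 1D application forces $g(x)=(\max(1,\|x\|_{\infty}))^{1/s}$.

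The main obstacle is the Fubini step: one must identify the $y_1$-marginal of $\L_s g$, after renormalization, with a specific 1D $s/(1+s)$-polar of $G$ and extract the correct normalization factor $(1+s)^{-1}$ needed to reach the sharp constant (a naive iteration of the two 1D $\F$-bounds would give only $16/(1+2s)^{2}$). This is precisely where the arithmetic condition $-1/s\in\N$ enters: it makes the exponent $1/s+1=1-k$ a negative integer, so that the associated 1D integrals $\int(1-sxy)_{+}^{1-k}\,dy$ are explicitly computable as rational functions in $s$, and the normalization factors line up cleanly. A secondary subtlety is to verify that $G^{s/(1+s)}\in\F$ whenever $g^{s}\in\F$, which is required for the second 1D inequality to apply with its sharp constant; this is expected to follow from a careful analysis of the marginal transform on $\F$, again leveraging the integrality of $k$.
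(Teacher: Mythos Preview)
Your approach has a genuine gap at the Fubini step, and it is not a matter of bookkeeping constants. The identification you need---that the $y_1$-marginal of $\L_s g$ is, up to rescaling, the $s/(1+s)$-polar of the $x_1$-marginal $G$---simply fails for general even $g$. Already for $g=\1_{[-1,1]^2}$ one computes $\int_\R\L_s g(y_1,y_2)\,dy_2=c_m\bigl(1+\tfrac1m|y_1|\bigr)^{1-m}$, whereas $\L_{s/(1+s)}G(y_1)$ is proportional to $\bigl(1+\tfrac1{m-1}|y_1|\bigr)^{1-m}$; these match only after a $g$-dependent rescaling of $y_1$, and for non-product $g$ no single rescaling works. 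More structurally, your displayed identity (which is not quite correct as written---the argument of the inner $\L_s^{(1)}$ should be $y_2/(1-sx_1y_1)$, not $y_2$) expresses $\L_s g(y_1,y_2)$ as $\inf_{x_1}[\cdots]$, and swapping $\int dy_2$ with $\inf_{x_1}$ gives only an \emph{upper} bound, the wrong direction for a lower bound on the product. The iterated one-dimensional route you describe is exactly what works for \emph{unconditional} functions---that is the content of~\cite{FGSZ}---and it is precisely the passage from unconditional to merely even that is the difficulty here. Your equipartition step is also problematic: neither~\cite{FN} nor Lemma~\ref{lem:m-equip} splits $g$ and $\L_s g$; they split $g$ and $g^{(m-1)/m}$, and the two lines produced are not perpendicular in general. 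Simultaneous equipartition of two even planar measures by \emph{perpendicular} lines through the origin is a one-parameter/two-constraint problem and fails generically, and without perpendicularity the linear map sending the lines to the axes for $g$ does not do so for $\L_s g$ (it acts on the polar via $(T^{-1})^*$).

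The paper's proof takes an entirely different route. Writing $m=-1/s$ and $f=g^s\in\F$, it lifts to the $m$-dimensional convex body $C_{m-2}^\infty(f)=\overline{\{(x,t)\in\R^2\times\R^{m-2}:\|t\|_\infty f(x/\|t\|_\infty)\le1\}}$, whose volume product equals $Q_m(f)$ up to the explicit factor $(\tfrac{m}{m-2})^2P(B_\infty^{m-2})$, and shows that $C_{m-2}^\infty(f)$ satisfies Mahler's conjecture in $\R^m$ via Proposition~\ref{prop:Mahler}. The coordinate hyperplanes equipartition it (this is where the $m$-equipartition of $1/f^m$ and $1/f^{m-1}$ is used), sections by $e_1^\perp,e_2^\perp$ reduce to the one-dimensional $\F$-inequality of~\cite{FGSZ}, sections by $e_i^\perp$ for $i\ge3$ give $C_{m-3}^\infty(f)$, and one inducts on $m$ with base case $m=3$ provided by the three-dimensional Mahler theorem of Iriyeh--Shibata~\cite{IS}. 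The integrality of $m=-1/s$ is needed solely so that $C_{m-2}^\infty(f)$ lives in an integer-dimensional space; it plays no role of the kind you describe.
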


The second conjecture compares the integral of $g$ with the integral of its double $s$-polar.
\begin{conjecture}\label{comp-bipolar}
Let $s\in(-\frac{1}{n},0)$ and let $g:\R^{n}\to\R_{+}$ be even $s$-concave such that $0<\int g<+\infty$. Then,
\begin{equation}\label{eq:bipolar}
\int_{\R^{n}}g(x)dx\ge(1+ns)\int_{\R^{n}}\L_s\L_{s}g(x)dx,
\end{equation}
with equality for $g=\1_{[-1,1]^n}$.
\end{conjecture}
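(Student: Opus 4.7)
The plan is to exploit an explicit geometric description of $\L_s\L_s g$ obtained by working with $\phi:=g^s$, which is an even convex function bounded below by $\phi(0)>0$ and tending to $+\infty$ at infinity. The function $\tilde\phi:=(\L_s\L_s g)^s$ is, by the bipolar property, the largest element of $\F$ pointwise below $\phi$; since every element of $\F$ is a supremum of affine minorants with non-negative intercept at the origin, one obtains the Legendre-type description
\[
\tilde\phi(x)\;=\;\sup\bigl\{\langle b,x\rangle-\phi^{*}(b)\ :\ \phi^{*}(b)\le 0\bigr\}.
\]
Applying a Lagrange-multiplier argument to this concave optimization over the symmetric convex set $K:=\{b:\phi^{*}(b)\le 0\}=\{b:\langle b,y\rangle\le\phi(y)\ \forall y\}$ yields
\[
\tilde\phi(x)=\phi(x)\ \text{on}\ C,\qquad \tilde\phi(x)=h_K(x)\ \text{on}\ \R^n\setminus C,
\]
where $h_K$ is the support function of $K$ and the ``core'' $C$ is the symmetric convex set of points at which $\phi$ admits a supporting affine function with non-negative intercept at the origin.

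With this structure, the integral outside $C$ simplifies considerably. Writing $r_C(u)$ for the radial function of $C$ in direction $u\in S^{n-1}$, the matching $h_K(u)r_C(u)=\phi(r_C(u)u)$ on $\partial C$ together with the convergent radial integral $\int_{r_C(u)}^{\infty}r^{1/s+n-1}\,dr=\tfrac{-s}{1+ns}r_C(u)^{1/s+n}$ (valid because $s>-1/n$) gives
\[
\int_{\R^n\setminus C}\L_s\L_s g(x)\,dx\;=\;\frac{-s}{1+ns}\int_{S^{n-1}}g(r_C(u)u)\,r_C(u)^n\,du.
\]
A radial integration by parts inside $C$ produces the companion identity
\[
\int_{S^{n-1}}g(r_C(u)u)\,r_C(u)^n\,du\;=\;n\int_C g(x)\,dx\;+\;\int_C\langle\nabla g(x),x\rangle\,dx.
\]
Substituting both into $\int g-(1+ns)\int\L_s\L_s g$ and using $\L_s\L_s g=g$ on $C$, the $\int_C g$ contributions cancel exactly, and what is left is the identity
\[
\int_{\R^n}g\;-\;(1+ns)\int_{\R^n}\L_s\L_s g\;=\;\int_{\R^n\setminus C}g(x)\,dx\;+\;s\int_C\langle\nabla g(x),x\rangle\,dx\;\ge\;0,
\]
the right-hand side being non-negative because $\phi$ is convex even with minimum at $0$, so $g=\phi^{1/s}$ is radially non-increasing from the origin ($\langle\nabla g,x\rangle\le 0$) while $s<0$. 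Tracking simultaneous equality forces $g$ to vanish outside $C$ and to be constant along rays inside $C$, hence constant on $C$ by continuity, so $g=c\,\1_{K'}$ for some even convex body $K'$, recovering and extending the stated extremizer $g=\1_{[-1,1]^n}$.

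The main technical hurdle is the regularity needed to justify the Lagrange analysis, the identification $\tilde\phi=h_K$ on the exterior, and the radial integration by parts: these are straightforward when $\phi$ is $C^2$ and strictly convex with $\phi(x)/|x|\to+\infty$ (so $C$ is a bounded strictly convex body with smooth boundary and $K$ has non-empty interior), but a general even $s$-concave integrable $g$ only gives $\phi$ convex and lower semi-continuous. The strategy is to prove the identity in this generic class first and then approximate a general $g$ by regularizations of the form $\phi*\rho_\varepsilon+\varepsilon|\cdot|^2$, truncated to a large centered cube when necessary to handle the case when $C$ is unbounded (which can occur if $\phi$ grows only linearly in some directions), and verify continuity of $\int g$, $\int\L_s\L_s g$, and $C$ under this approximation.
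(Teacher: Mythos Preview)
Your approach is correct and in fact establishes the conjecture in \emph{all} dimensions $n$, whereas the paper leaves it open in general and only proves the case $n=1$ (Theorem~\ref{th:f-mmf-dim1}). The paper's one-dimensional argument is geometric: writing $f=g^{s}$ and $m=-1/s$, it realizes $\int f^{-m}$ and $\int(\M\M f)^{-m}$ as the weighted measures $\mu_{m}$ of the planar set $C_{1}(f)$ and of its convex hull, and then compares these via a triangle estimate specific to the plane. Your route is analytic and dimension-free: identifying $(\L_{s}\L_{s}g)^{s}=\M\M\phi$ as the supremum of the affine minorants of $\phi=g^{s}$ with non-negative intercept gives the dichotomy $\tilde\phi=\phi$ on a star-shaped core $C$ and $\tilde\phi=h_{K}$ outside (the KKT optimizer lies on $\partial K=\{\phi^{*}=0\}$ with outward normal proportional to $x$, so the constrained value is exactly $h_{K}(x)$), and the radial integration by parts then produces the \emph{identity}
\[
\int_{\R^{n}}g-(1+ns)\int_{\R^{n}}\L_{s}\L_{s}g\;=\;\int_{\R^{n}\setminus C}g\;+\;s\int_{C}\langle\nabla g(x),x\rangle\,dx,
\]
each term non-negative. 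Star-shapedness of $C$ (needed for the radial decomposition, though not convexity) follows from $\tfrac{d}{dr}\bigl[\phi(rx)-r\langle\nabla\phi(rx),x\rangle\bigr]=-r\langle\nabla^{2}\phi(rx)x,x\rangle\le 0$, and the mollification $\phi_{\varepsilon}=\phi*\rho_{\varepsilon}+\varepsilon|\cdot|^{2}\ge\phi$ (by Jensen) makes the approximation step routine via dominated convergence. Your argument also delivers the complete equality characterization $g=c\,\1_{K'}$ for an arbitrary symmetric convex body $K'$, going beyond what the paper obtains for $n=1$. In short, your proof supersedes the paper's partial result.
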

Notice that, for any function $g:\R^{n}\to\R_{+}$, one has, for any $x\in\R^n$, $g(x)\le \L_s\L_sg(x)$ thus $\int_{\R^{n}}g(x)dx\le\int_{\R^{n}}\L_s\L_{s}g(x)dx$. The conjectured inequality \eqref{eq:bipolar} postulates that one has an opposite bound up to constant for $s$-concave even functions.

 %This statement has also been proved in more general situations \cite{AKM,FM07,L1,L2}.\\

%The symmetric version of Mahler's conjecture for even convex functions which was proved in dimension $1$ by \cite{FM08a} and in dimension $2$ by \cite{FN} states the following.
%\begin{conjecture}\label{conj:conv}
%Let $n\ge1$ and $\f:\R^n\to\R\cup\{+\infty\}$ be an even convex function such that $0<\int_{\R^{n}}e^{-\f(x)}dx<+\infty$. Then,
%\[
%\int e^{-\f}\int e^{-\f^{*}}\ge 4^n,
%\]
%with equality if and only if there exists  $c\in\R$ and two Hanner polytopes  $K_1\subset F_1$ and $K_2\subset F_2$, where $F_1$ and $F_2$ are two supplementary subspaces in $\R^n$, such that for all $(x_1,x_{2})\in F_1\times F_{2}$
%\[\f(x_1+x_2)=c+\|x_1\|_{K_1}+I_{K_2}(x_2),\quad a.e.\]
%where $I_{K}$ is the function defined by $I_{K}(x)=0$ if $x\in K$ and $I_{K}(x)=+\infty$ if $x\notin K$.
%\end{conjecture}
%with equality if and only if there exists a partition $\{1,...,n\}=I_{1}\cup I_{2}$ and two Hanner polytopes $K_{1}\subset F_{1}$ and $K_{2}\subset F_{2}$, where $F_{j}=\mbox{Span}\{e_{i},i\in I_{j}\}$, for $j=1$, $2$ such that for any $x_{1}\in F_{1}$ and $x_{2}\in F_{2}$, for $x=x_{1}+x_{2}$, one has $g(x)=(1-s\|x_{1}\|_{K_{1}})^{\frac{1}{s}}_{+}1_{K_{2}}(x_{2})$ for $s\neq0$ and $g(x)=e^{-\|x_{1}\|_{K_{1}}}1_{K_{2}}(x_{2})$ for $s=0$.

\forget
Hence, the inequality in Conjecture \ref{conj:s-concave} can be simplified as follows:
\begin{conjecture}\label{conj:concavedimn}
Let $m\in\Z$ and $f:\R^{n}\to\R_{+}$ be an even concave function such that $0<\int_{\R^{n}}f<+\infty$. Then,
$$
P_{m}(f)=\int_{\R^{n}}f(x)^{m}dx\int_{\R^{n}}(\L f(y))^{m}dy\geq\frac{4^{n}}{(m+1)...(m+n)},
$$
where $\L f(y)=\inf_{x\in\R^{n}}\frac{\left(1-\langle x,y\rangle \right)_{+}}{f(x)}$.
\end{conjecture}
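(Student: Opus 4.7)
The plan is to pin down the structure of $\tilde g:=\L_s\L_s g$ on and off a certain ``contact body'' $D$, then polar-integrate against $D$ and exploit the monotone radial profile of $g$.

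First, set $f:=g^s$ (an even convex function, possibly $+\infty$ outside $\supp g$) and $\tilde f:=\tilde g^s$. Since $(\L_s\L_s g)^s\in\F$ and $\tilde f\le f$, $\tilde f$ is the largest member of $\F$ bounded above by $f$. Introduce the contact body
\[
D := \{x\in\R^n : \tilde f(x) = f(x)\}.
\]
I would show $D$ is symmetric, star-shaped about $0$, and that on $D^c$ the constraint ``$\tilde f(rv)/r$ non-increasing'' must be saturated (otherwise $\tilde f$ could be enlarged while remaining an $\F$-minorant). Saturation forces $\tilde f$ to be positively $1$-homogeneous along each ray outside $D$:
\[
\tilde f(x) = \|x\|_D\,f\bigl(x/\|x\|_D\bigr)\qquad\text{for }x\notin D,
\]
with $\|\cdot\|_D$ the gauge of $D$. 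Equivalently, $\tilde g(rv)=r^{1/s}g(v)$ for $v\in\partial D$ and $r\ge1$. A natural route is the representation $\tilde f(x)=\sup\{L(x):L\text{ affine},\ L\le f,\ L(0)\ge0\}$, where each such $L$ satisfies $L(tv)/t=\langle\nabla L,v\rangle+L(0)/t$ non-increasing in $t>0$, so $L\in\F$. A Lagrangian argument at fixed $x\notin D$ identifies the optimal affine minorant as the tangent to $f$ at $x/\|x\|_D\in\partial D$, which passes through the origin precisely because on $\partial D$ one has $f(v)=\langle\nabla f(v),v\rangle$.

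Next I would polar-integrate with respect to $D$, using the cone measure $d\sigma_D$ on $\partial D$. Define $G(r):=\int_{\partial D}g(rv)\,d\sigma_D(v)$, so $\int_{\R^n}g=\int_0^\infty r^{n-1}G(r)\,dr$, and since $\tilde g=g$ on $D$, also $\int_D\tilde g=\int_0^1 r^{n-1}G(r)\,dr$. On $D^c$ the radial formula together with $s>-1/n$ gives
\[
\int_{D^c}\tilde g\,dx \;=\; G(1)\int_1^\infty r^{n-1+1/s}\,dr \;=\; \frac{-s}{1+ns}\,G(1).
\]
A short algebraic rearrangement turns $\int g\ge(1+ns)\int\tilde g$ into
\[
n\int_0^1 r^{n-1}G(r)\,dr \;+\; \Bigl(-\tfrac{1}{s}\Bigr)\int_1^\infty r^{n-1}G(r)\,dr \;\ge\; G(1).
\]

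I would close the proof by monotonicity: since $g$ is even $s$-concave, for each $v$ the map $r\mapsto g(rv)^s$ is even and convex, hence non-decreasing in $|r|$, so $r\mapsto g(rv)$ is non-increasing on $[0,\infty)$; integrating, $G$ is non-increasing, so $G(r)\ge G(1)$ on $[0,1]$. This alone gives $n\int_0^1 r^{n-1}G(r)\,dr\ge G(1)\cdot n\int_0^1 r^{n-1}\,dr=G(1)$, while the second term is non-negative because $-1/s>0$. Tracking equality forces $G$ constant on $[0,1]$ and zero on $(1,\infty)$, so $g=c\,\1_D$ for some $c>0$, which matches the stated equality case $g=\1_{[-1,1]^n}$ and in fact gives equality for the indicator of any symmetric convex body.

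The main obstacle will be the first step: rigorously establishing the radial representation of $\tilde f$ on $D^c$, especially in the non-smooth regime (e.g.\ when $g$ is an indicator, so $f$ has vertical tangents on $\partial\supp g$, or when $\partial D$ has corners). A careful subdifferential (or mollification/regularization) treatment is needed; that the authors only state the companion theorems for $n=2$ and $1/s\in\Z$ suggests the argument there is more delicate, but the skeleton above should capture the essential mechanism in any dimension and any $s\in(-1/n,0)$.
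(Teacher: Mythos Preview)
Your proposal does not address the stated conjecture. The statement asks for the functional Mahler inequality
\[
P_m(f)=\int_{\R^n}f^m\int_{\R^n}(\L f)^m\ \ge\ \frac{4^n}{(m+1)\cdots(m+n)}
\]
for an even \emph{concave} function $f$; this is the reformulation of Conjecture~\ref{mahler-s} in the regime $s=1/m>0$. Your entire argument, by contrast, is aimed at the bipolar comparison inequality
\[
\int_{\R^n}g\ \ge\ (1+ns)\int_{\R^n}\L_s\L_s g
\]
for $s$-concave $g$ with $s<0$, which is Conjecture~\ref{comp-bipolar}. These are different statements in different regimes: one is a lower bound on a volume \emph{product}, the other a one-sided comparison between a function and its bipolar. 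Nothing in your sketch produces any control on $\int f^m\int(\L f)^m$. The paper's partial proofs of the stated conjecture (Theorems~\ref{dim1} and~\ref{dim2}) proceed by entirely different means: associating to $f$ a symmetric convex body $K_{L,m}(f)\subset\R^{m+n}$, establishing an $m$-equipartition, and invoking Proposition~\ref{prop:Mahler}.

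If your intention was Conjecture~\ref{comp-bipolar}, then your route is genuinely different from the paper's proof of the $n=1$ case (Theorem~\ref{th:f-mmf-dim1}), which passes through the planar body $C_1(f)$ and compares $\mu_m(C_1(f))$ with $\mu_m(\conv(C_1(f)))$ via an explicit triangle reduction. Your radial scheme---identify a contact set $D$, show $\tilde f$ is $1$-homogeneous along rays outside $D$, polar-integrate, and conclude by monotonicity of $G$---is attractive and, if correct, dimension-free. The algebra in your steps (4)--(6) checks out. The real gap is step (3): you assert that on $D^c$ the monotone-ratio constraint is saturated, but ``otherwise $\tilde f$ could be enlarged'' ignores that $\tilde f$ must stay \emph{convex}, not merely satisfy the ray condition; a local increase at a point of $D^c$ need not be admissible. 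Your proposed representation $\tilde f(x)=\sup\{L(x):L\text{ affine},\ L\le f,\ L(0)\ge0\}$ is the right object to make this rigorous, but you still owe the identification of this supremum with $\M\M f$ and the attainment argument showing that for $x\notin D$ the optimal affine minorant is tangent to $f$ at $x/\|x\|_D$ and vanishes at the origin.
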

\forgotten

%The $1$-dimensional case of the previous theorem for even $s$-concave functions was proved for every $s\in\R$ (see \cite{SR}). 
In our third main result, we establish Conjecture \ref{comp-bipolar} in dimension $n=1$.
\begin{theorem}\label{comp-bipolar-one}
Let $s\in(-1,0)$ and let $g:\R\to\R_{+}$ be even $s$-concave such that $0<\int g<+\infty$. Then,
$$
\int_{\R}g(x)dx\ge(1+s)\int_{\R}\L_s\L_{s}g(y)dy,
$$
with equality for $g=\1_{[-1,1]}$.
\end{theorem}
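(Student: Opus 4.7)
The plan is to exploit the one-dimensional structure to describe $\L_{s}\L_{s}g$ explicitly and then conclude by a simple monotonicity argument. By even symmetry and scaling we may assume $g(0)=1$; set $\psi:=g^s$ (so $g=\psi^{-m}$ with $m:=-1/s\in(1,\infty)$). Then $\psi$ is even, convex on $\R$, satisfies $\psi(0)=1$, is nondecreasing on $[0,\infty)$ with $\psi\ge 1$, and equals $+\infty$ outside the symmetric support of $g$.

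The crucial step is the explicit identification of $(\L_{s}\L_{s}g)^s$. From the discussion preceding Definition \ref{def:F}, $(\L_{s}\L_{s}g)^s$ belongs to $\F$ and is $\le\psi$; combined with the idempotency of $\L_{s}\L_{s}$, it is the \emph{largest} such function. In dimension one this closure admits a clean description. Since $\{x>0:\psi(x)/x\le t\}=\{x>0:\psi(x)-tx\le 0\}$ is convex (sublevel set of a convex function), $x\mapsto\psi(x)/x$ is quasi-convex on $(0,\infty)$; let $x^*\in(0,+\infty]$ be its argmin. If $x^*=+\infty$ then $\psi\in\F$, so $\L_{s}\L_{s}g=g$ and the inequality is trivial since $1+s<1$. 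Otherwise, setting $c:=\psi(x^*)/x^*$ (so that the tangent line to $\psi$ at $x^*$ passes through the origin), I claim $\L_{s}\L_{s}g=\tilde\psi^{-m}$ where
$$\tilde\psi(x):=\begin{cases}\psi(x)&\text{if }|x|\le x^*,\\ c|x|&\text{if }|x|>x^*.\end{cases}$$
Three verifications are needed: (a) $\tilde\psi\le\psi$, which is $\psi(x)\ge cx$, the defining property of $x^*$; (b) $\tilde\psi\in\F$, using that $c$ lies in the subgradient of $\psi$ at $x^*$ (convex junction), and that $\tilde\psi(tx)/t$ is nonincreasing because $\psi(y)/y$ is nonincreasing on $(0,x^*]$ by quasi-convexity while $\tilde\psi$ is linear beyond $x^*$; and (c) maximality, since any $f\in\F$ with $f\le\psi$ satisfies $f(y)\le y\inf_{z\le y}\psi(z)/z$, which equals $\psi(y)$ for $y\le x^*$ and $cy$ for $y\ge x^*$.

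With $\tilde\psi$ in hand, the inequality reduces to an elementary estimate. Evenness and the explicit form of $\tilde\psi$ yield
$$\int_{\R}g=2\int_0^\infty\psi(x)^{-m}\,dx,\qquad\int_{\R}\L_{s}\L_{s}g=2\int_0^{x^*}\psi(x)^{-m}\,dx+\frac{2x^*}{(m-1)\psi(x^*)^m},$$
and, since $1+s=(m-1)/m$, the desired inequality $\int g\ge(1+s)\int\L_{s}\L_{s}g$ rearranges to
$$\int_0^{x^*}\psi(x)^{-m}\,dx+m\int_{x^*}^\infty\psi(x)^{-m}\,dx\ \ge\ \frac{x^*}{\psi(x^*)^m}.$$
Since $\psi$ is nondecreasing on $[0,\infty)$, $\psi(x)\le\psi(x^*)$ for $x\in[0,x^*]$, hence $\psi(x)^{-m}\ge\psi(x^*)^{-m}$; integrating shows the first integral alone already dominates $x^*/\psi(x^*)^m$, and the tail is nonnegative. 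Equality forces $\psi\equiv\psi(x^*)$ on $[-x^*,x^*]$ and $\psi\equiv+\infty$ outside, giving $g=\1_{[-1,1]}$ up to scaling.

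The main obstacle is part (b)--(c) of the identification of $\tilde\psi$, since one must handle convex functions that need not be smooth or globally finite. Once that identification is settled, the reduction and the final monotonicity step are routine.
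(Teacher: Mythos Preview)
Your argument is correct and takes a genuinely different route from the paper's.

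The paper proves the equivalent form (Theorem~\ref{th:f-mmf-dim1}) via a \emph{geometric} representation: writing $\int_\R f^{-m}=\mu_m(C_1(f))$ for the two-dimensional body $C_1(f)=\{(x,t):|t|f(x/|t|)\le 1\}$ and the weighted measure $d\mu_m=\frac{m}{2}|t|^{m-2}\,dx\,dt$, then using $C_1(\M\M f)=\conv(C_1(f))$. The inequality becomes $\mu_m(C_1(f))\ge\frac{m-1}{m}\mu_m(\conv(C_1(f)))$, and the paper reduces to the extremal case where $C_1(f)_+$ is a triangle with apex at the origin, computing that case explicitly.

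Your approach instead identifies $(\L_s\L_s g)^s=\M\M f$ \emph{directly} as the $\F$-closure $\tilde\psi$ of $\psi=g^s$: keep $\psi$ up to the minimizer $x^*$ of $\psi(x)/x$, then continue linearly through the origin. Once this is known, the comparison of $\int\psi^{-m}$ and $\int\tilde\psi^{-m}$ is a one-line monotonicity bound. This is more elementary---no auxiliary convex body, no measure $\mu_m$---and isolates exactly where the one-dimensional structure is used (the quasi-convexity of $\psi(x)/x$ and the explicit form of $\tilde\psi$). The paper's geometric picture, on the other hand, unifies this result with the rest of the paper's machinery (the bodies $C_m^K(f)$), and makes the extremal case visually transparent as a triangle-versus-rectangle comparison. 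Both yield the same equality characterization $g=c\,\1_{[-\alpha,\alpha]}$.

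Two minor remarks on your write-up: (i) when the argmin of $\psi(x)/x$ is an interval, $\psi$ is already linear there with slope $c$, so the resulting $\tilde\psi$ is independent of which point in the interval you call $x^*$; (ii) in the case $x^*=+\infty$ (so $\psi\in\F$ already), the inequality is strict since $1+s<1$, so there is no equality issue to discuss.
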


\begin{remark}
Notice that, if Conjecture \ref{comp-bipolar} holds for any $s$-concave even function $g$ and Conjecture~\ref{mahler-s-F} holds for $\L_s g$, then multiplying equation \eqref{eq:bipolar} by $\int\L_s g$, using that $(\L_s g)^s\in\F$ and applying Conjecture~\ref{mahler-s-F} to $\L_sg$, then we deduce that Conjecture~\ref{mahler-s} holds for $g$.  
\end{remark}
From the above remark, the results in \cite{FGSZ} and Theorem \ref{comp-bipolar-one}, one deduces that Conjecture~\ref{mahler-s} holds for $n=1$ and $s\in(-1,0)$, which is the following theorem.
\begin{theorem}\label{th:mahler-n-1}
Let $s\in(-1,0)$ and $g:\R\to\R_{+}$ be even convex such that $0<\int g<+\infty$. Then,
$$
\int_{\R}g(x)dx\int_{\R}\L_{s}g(y)dy\geq\frac{4}{(1+s)^2},
$$
with equality if and only if $g=c\1_{[-a,a]}$, for some $c>0$ and $a>0$.
\end{theorem}

%we first give a shorter and simpler proof of the preceding theorem in the case where $n=1$ and $1/s\in\mathbb{N}$ and we will also prove the case where $n=1$ and $\frac{1}{s}<-1$.
%following conjecture in dimension $1$ for $s=\frac{1}{m}>0$ and $s=-\frac{1}{m}\in]-1,0[$. 
%\begin{conjecture}\label{conj:concavedim1}    Let $s>-1$ and $g:\R\to\R_{+}$ be an even $s$-concave function such that $0<\int_{\R}g<+\infty$. Then,\[\int_{\R}g(x)dx\int_{\R}\L_{s}g(y)dy\geq \frac{4}{1+s}.\]\end{conjecture}
%The main part of our paper concerns the $2$-dimensional version of the previous theorem for even $s$-concave functions for $s=\frac{1}{m}$ and $s=-\frac{1}{m}$ where $m\in\N^{*}$.
%Unfortunately, for the case where $s<0$, we shall restrain our result for lower semi-continuous $s$-concave functions admitting an asymptote in every non-zero direction.
%\begin{theorem}\label{thm:sneg}
%Let $m\in\N^{*}$, $s=-\frac{1}{m}$ and $g:\R^{2}\to\R_{+}$ be an even $s$-concave function admitting an asymptote in every non-zero direction such that $0<\int g<+\infty$. Then,
%\[
%\int_{\R^{2}}g(x)dx\int_{\R^{2}}\L_{s}g(y)dy\geq\frac{s^{2}}{1-2s}\frac{16}{(s-1)(2s-1)}.
%\]
%\end{theorem}

The proof's strategy of our main theorems in dimension $n=2$ relies on constructing an associated symmetric convex body in dimension $m+2$, where $m=1/|s|$, to which we apply the following proposition concerning Mahler's conjecture in higher dimensions proved in \cite{FHMRZ}.
\begin{proposition}\label{prop:Mahler} {\bf{[FHMRZ]}}
Let $K\subset\R^{n}$ be a centrally symmetric convex body that can be partitioned with hyperplanes $H_{1}$, $H_{2}$,..., $H_{n}$ into $2^{n}$ pieces of the same volume such that each section $K\cap H_{i}$ satisfies Mahler's conjecture and is partitioned into $2^{n-1}$ regions of the same $(n-1)$-dimensional volume by the remaining hyperplanes, then
\[
P(K)\geq\frac{4^{n}}{n!}.
\]
\end{proposition}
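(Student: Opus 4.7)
The plan is to establish the following ``octant Meyer'' inequality: for every sign pattern $\epsilon\in\{-1,+1\}^n$,
\begin{equation*}
|K_\epsilon|\cdot|(K^\circ)_{-\epsilon}| \;\geq\; \frac{1}{n!}, \qquad (\star)
\end{equation*}
where, after a volume-preserving linear change of coordinates taking the $H_i$'s to the coordinate hyperplanes $\{x_i=0\}$ (an operation that preserves both $P(K)$ and all the hypotheses), we set $K_\epsilon = K\cap\R^n_\epsilon$ and $(K^\circ)_{-\epsilon} = K^\circ\cap\R^n_{-\epsilon}$, with $\R^n_\epsilon = \{x\in\R^n : \epsilon_i x_i \geq 0,\ i=1,\dots,n\}$. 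Granted $(\star)$, summing over the $2^n$ choices of $\epsilon$ and invoking the equipartition hypothesis $|K_\epsilon|=|K|/2^n$ yields
\begin{equation*}
\frac{|K|\cdot|K^\circ|}{2^n} \;=\; \sum_{\epsilon} |K_\epsilon|\cdot|(K^\circ)_{-\epsilon}| \;\geq\; \frac{2^n}{n!},
\end{equation*}
which is exactly $P(K)\geq 4^n/n!$. The cube $K=[-1,1]^n$ saturates each instance of $(\star)$, since then $|K_\epsilon|=1$ and $|(K^\circ)_{-\epsilon}|=1/n!$ for every $\epsilon$, guaranteeing the sharpness of the bound.

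To establish $(\star)$ I would argue by induction on $n$. The base case $n=1$ is the trivial identity $a\cdot(1/a)=1$. For the inductive step, I would slice both $K_\epsilon$ and $(K^\circ)_{-\epsilon}$ in the $x_n$-direction and use the classical duality between sections and projections of dual bodies, namely $(K\cap H_n)^\circ_{H_n}$ equals the projection of $K^\circ$ onto $H_n$. The slice of $K_\epsilon$ at $x_n=0$ is exactly $(K\cap H_n)_{\epsilon'}$ with $\epsilon'=(\epsilon_1,\ldots,\epsilon_{n-1})$; by hypotheses (c) and (d), the section $K\cap H_n$ itself satisfies the proposition's hypotheses in dimension $n-1$, so the inductive form of $(\star)$ gives
\begin{equation*}
|(K\cap H_n)_{\epsilon'}| \cdot |((K\cap H_n)^\circ_{H_n})_{-\epsilon'}| \;\geq\; \frac{1}{(n-1)!}.
\end{equation*}
The remaining task is to propagate this sectional bound to the full inequality $(\star)$ via an integration in the $x_n$-variable, combined with cone/bicone volume comparisons between $K$ and its sections (and the dual comparisons between $K^\circ$ and its projections).

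The main obstacle is precisely this integration step: one needs to relate slices of $(K^\circ)_{-\epsilon}$ at height $s$ to appropriately normalized polars of sections of $K$ at height $t$, and then match the constants so that the product of slice volumes integrates to exactly $1/n!$. This relies essentially on the equipartition hypotheses; without (c) and (d) the slice-volume functions would not be balanced enough for the standard concavity-based integration inequalities (of Pr\'ekopa--Leindler or Brunn--Minkowski type) to close with the sharp constant. The cube furnishes the model example where every step is tight, guiding the correct choice of integration variable and test function, and making clear why the nested equipartition structure is exactly what is needed to recover the factor $\tfrac{4^n}{n!}$ rather than a weaker constant.
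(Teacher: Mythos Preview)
This proposition is quoted from \cite{FHMRZ} and is not proved in the present paper. Your overall framework---reduce to the octant inequality $(\star)$ and then sum over $\epsilon$ using the equipartition of $K$---is correct and is indeed the skeleton of the argument in \cite{FHMRZ}. The gap lies in your proposed proof of $(\star)$ itself.

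You assert that ``by hypotheses (c) and (d), the section $K\cap H_n$ itself satisfies the proposition's hypotheses in dimension $n-1$.'' This is false: hypothesis (c) says only that each $K\cap H_i$ satisfies the \emph{conclusion} of Mahler's conjecture, and (d) that it is equipartitioned by the remaining $H_j$. Nothing whatsoever is assumed about the codimension-two sections $K\cap H_i\cap H_j$; they need neither satisfy Mahler nor be equipartitioned. Hence the proposition's hypotheses do not descend to $K\cap H_n$, and the ``inductive form of $(\star)$'' you invoke is simply unavailable. What (c) and (d) do give for $K\cap H_n$ is only the \emph{summed} bound $\sum_{\epsilon'}|(K\cap H_n)_{\epsilon'}|\cdot|((K\cap H_n)^\circ)_{-\epsilon'}|\ge 2^{n-1}/(n-1)!$, which is strictly weaker than the per-$\epsilon'$ estimate your slicing argument would need. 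Beyond this structural issue, you explicitly leave the integration step open, and a one-directional Pr\'ekopa--Leindler argument does not by itself recover the sharp constant here.

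The argument in \cite{FHMRZ} proceeds differently: rather than inducting on $n$ through $(\star)$, it proves a direct geometric inequality bounding $|K_\epsilon|\cdot|(K^\circ)_{-\epsilon}|$ from below by an expression built from the facet data $|K\cap H_i\cap\R^n_\epsilon|_{n-1}$ and the corresponding pieces of $(K\cap H_i)^\circ$. Hypothesis (d) converts the former into $|K\cap H_i|/2^{n-1}$, and hypothesis (c) is then applied \emph{once}, as the global bound $P(K\cap H_i)\ge 4^{n-1}/(n-1)!$, with no recursion to lower-dimensional instances of the proposition.
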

In our proofs, we shall need  the following notion of equipartition of functions.
\begin{definition}
Let $m\in\Z$. A function $f:\R^{n}\to\R_{+}$ is said to be $m$-equipartitioned if 
$$
\int_{\R^{n}_{\varepsilon}}f^m=\frac{1}{2^{n}}\int_{\R^{n}}f^{m}\quad\mbox{and}\quad\int_{\R_{\varepsilon}^{n}} f^{m-1}=\frac{1}{2^{n}}\int_{\R^{n}}f^{m-1},
$$
where $\forall\varepsilon\in\{-1,1\}^{n}$, $\R^{n}_{\varepsilon}=\{x\in\R^{n};\varepsilon_{i}x_{i}\geq 0,\forall i\in\{1,...,n\}\}$.   
\end{definition}

This paper is organized in the following way: In section 2, we present some general results and properties of the $s$-dual function $\L_{s}$ and we introduce the construction of convex bodies using $s$-concave functions. In section 3, we establish Conjecture \ref{mahler-s} in dimensions $n=1$ and $n=2$ for $\frac{1}{s}\in\N$. In section 4, we give a lower bound of the functional volume product applied to even $s$-concave functions in dimensions $n=1$ and $n=2$ for $-\frac{1}{s}\in\N$. 

\section{Construction and properties of associated convex bodies.}
Let $s>-\frac{1}{n}$ and $g:\R^{n}\to\R_{+}$ be an $s$-concave function. We discuss two cases according to the sign of $s$.

\subsection{Case $s>0$.} 
Let $m=\frac{1}{s}>0$. Assume that $0<\int g<+\infty$. Then there exists a function $f:\R^{n}\to\R_{+}$ concave on its support such that $g=f^{\frac{1}{s}}=f^{m}$ and
\[
\L_{s}g(y)=\L_{s}f^{\frac{1}{s}}(y)=\inf_{x\in\R^{n}}\frac{\left(1-s\langle x,y\rangle\right)_{+}^{\frac{1}{s}}}{f(x)^{\frac{1}{s}}}=\left(\inf_{x\in\R^{n}}\frac{\left( 1-\langle x,sy\rangle\right)_{+}}{f(x)}\right)^{\frac{1}{s}}=\left(\L f(sy)\right)^{\frac{1}{s}}=\left( \L f\left(\frac{y}{m}\right)\right)^{m},
\]
where $\L f(y)=\L_{1}f(y)=\inf_{x\in\R^{n}}\frac{\left(1-\langle x,y\rangle \right)_{+}}{f(x)}$. 
In addition, by the change of variables $y=mz$, we get
$$
\int_{\R^{n}}\L_{s}g(y)dy=m^{n}\int_{\R^{n}}(\L f(z))^{m}dz.
$$
Introducing the operator $P_{m}$ defined as follows
\[
P_{m}(f)=\int_{\R^{n}}f(x)^{m}dx\int_{\R^{n}}(\L f(y))^{m}dy,
\]
and changing variables, we get
\[
\int_{\R^{n}}g(x)dx\int_{\R^{n}}\L_{s}g(y)dy=m^{n}P_{m}(f).
\]
Thus, for $s>0$, Conjecture \ref{mahler-s} can equivalently be formulated in the following form: for any even function $f:\R^{n}\to\R_{+}$, concave on its support and integrable and for any $m>0$, one should have 
\begin{equation}\label{eq:conjPmf}
P_m(f)=\int_{\R^{n}}f(x)^{m}dx\int_{\R^{n}}(\L f(y))^{m}dy\ge P_m(\1_{[-1,1]^n})=\frac{4^n}{(m+1)\cdots(m+n)}.
\end{equation}
The operator $P_{m}$ is invariant under linear transformation. Indeed, let $T:\R^n\to\R^n$ be an invertible linear map. Then, putting $z=Tx$ we get,
\[
\int_{\R^{n}}(f\circ T(x))^{m}dx=\frac{1}{|\det T|}\int_{\R^{n}}f(z)^{m}dz.
\]
We also obtain, for every $y\in\R^n$,
\[
\L(f\circ T)(y)=\inf_{x\in\R^{n}}\frac{\left(1-\langle x,y\rangle\right)_{+}}{f(Tx)}=\inf_{z\in\R^{n}}\frac{\left(1-\langle T^{-1}z,y\rangle\right)_{+}}{f(z)}=\L f\left((T^{-1})^{*}y\right).
\]
Therefore, $\L(f\circ T)=(\L f)\circ (T^{-1})^*$. Hence, changing variables, we get
\begin{eqnarray*}
P_{m}(f\circ T)%&=&
%\int_{\R^{n}}(f\circ T(x))^{m}dx\int_{\R^{n}}(\L(f\circ T)(y))^{m}dy\\
&=&\int_{\R^{n}}(f(Tx))^{m}dx\int_{\R^{n}}(\L f((T^{-1})^{*}y))^{m}dy\\
&=&\frac{1}{|\det T|}\int_{\R^{n}}f(x)^{m}dx\times |\det T^{*}|\int_{\R^{n}}(\L f(y))^{m}dy=P_{m}(f).
\end{eqnarray*}
Let us introduce some tools that will help us in the proof of our main theorem in section 3. \\
Let $L$ be a symmetric convex body in $\R^{m}$. Define the set 
\[
K_{L,m}(f)=\{(x,t)\in\R^{n}\times\R^{m};\|t\|_{L}\leq f(x)\},
\]
where $\|.\|_{L}$ is defined for $y\in\R^{m}$ by $\|y\|_{L}=\inf\{\lambda>0; y\in \lambda L\}$. Variants of these sets and their volume products were considered in \cite{AKM, BF} but we establish their properties below for completeness. 
Since $f$ is even and concave, then $K_{L,m}(f)$ is a symmetric convex body in $\R^{m+n}$ and for every $m>0$ one has
\[
|K_{L,m}(f)|=|\{(x,t)\in\R^{n}\times\R^{m};\|t\|_{L}\leq f(x)\}|=\int_{\R^{n}}|f(x)L|_{m}dx=|L|\int_{\R^{n}}f(x)^mdx.
\]
Moreover, we have 
\begin{eqnarray*}
(K_{L,m}(f))^{\circ}&=&\{(y,s)\in\R^{n}\times\R^{m};\langle x,y\rangle+\langle s,t\rangle\leq 1,\quad\forall (x,t)\in K_{L,m}(f)\}\\
%&=&\{(y,s)\in\R^{n}\times\R^{m};\quad\langle x,y\rangle+\langle s,t\rangle\leq 1;\quad\forall (x,t) \mbox{ such that }\|t\|_{L}\leq f(x)\}\\
%&=&\{(y,s)\in\R^{n}\times\R^{m};\quad\langle s,t\rangle\leq 1-\langle x,y\rangle;\quad \forall (x,t) \mbox{ such that }\|t\|_{L}\leq f(x)\}\\
&=&\{(y,s)\in\R^{n}\times\R^{m}; \sup_{\{t\in\R^{m};\|t\|_{L}\leq f(x)\}}\langle s,t\rangle\leq 1-\langle x,y\rangle,\quad\forall x\in\R^n\}\\
&=&\left\{(y,s)\in\R^{n}\times\R^{m};f(x)\|s\|_{L^{\circ}}\leq (1-\langle x,y\rangle)_+,\quad\forall x\in\R^n\right\}\\
%&=&\left\{(y,s)\in\R^{n}\times\R^{m};\quad \|s\|_{L^{\circ}}\leq\frac{(1-\langle x,y\rangle)_+}{f(x)},\quad\forall x\in\R^n\right\}\\
&=&\left\{(y,s)\in\R^{n}\times\R^{m}; \|s\|_{L^{\circ}}\leq\L f(y)\right\}\\
&=&K_{L^{\circ},m}(\L f).
\end{eqnarray*}
Hence
\[
|K_{L,m}(f)^{\circ}|=|K_{L^{\circ},m}(\L f)|=|L^{\circ}|\int_{\R^{n}}(\L f(y))^{m}dy.
\]
Thus, one has 
\begin{eqnarray*}
P(K_{L,m}(f))&=&|K_{L,m}(f)||K_{L^{\circ},m}(\L f)|
=|L||L^{\circ}|\int_{\R^{n}}f(x)^mdx\int_{\R^{n}}(\L f(y))^{m}dy
=P(L)P_{m}(f).
\end{eqnarray*}
We need the following lemma describing the relation between the concave function $f$ and its dual.
\begin{lemma}\label{supp}
Let $f:\R^{n}\to\R_{+}$ be a concave function on its support. Then,
\[
\supp(\L f)=(\supp f)^{\circ}.
\]
\end{lemma}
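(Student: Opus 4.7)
The strategy is to prove the equality $\supp(\L f)=(\supp f)^{\circ}$ by verifying the two inclusions separately; both are essentially direct consequences of the definition of $\L f$ combined with (i) density of $\{f>0\}$ in $\supp f$, (ii) closedness of the polar, and (iii) the fact that $f$ is bounded on its support.

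First I would establish $\supp(\L f)\subset(\supp f)^{\circ}$. If $\L f(y)>0$, then since $\L f(y)$ is an infimum of quantities of the form $(1-\langle x,y\rangle)_{+}/f(x)$ over $\{x:f(x)>0\}$, each such quantity must be strictly positive, forcing $\langle x,y\rangle<1$ for every $x$ with $f(x)>0$. By density of $\{f>0\}$ in $\supp f$ and continuity of $x\mapsto\langle x,y\rangle$, this upgrades to $\langle x,y\rangle\le 1$ for every $x\in\supp f$, i.e., $y\in(\supp f)^{\circ}$. Passing to the closure and using that $(\supp f)^{\circ}$ is closed yields $\supp(\L f)\subset(\supp f)^{\circ}$.

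For the reverse inclusion, it suffices to show that the interior of $(\supp f)^{\circ}$ is contained in $\{\L f>0\}$; the inclusion of the full polar then follows by taking closures, using that $(\supp f)^{\circ}$ is the closure of its interior (this is automatic when $\supp f$ is bounded, which holds in the setting of the paper because $f$ is concave and non-negative with $f^{m}$ integrable). Given $y$ in the interior of $(\supp f)^{\circ}$, one can find $\varepsilon>0$ such that $(1+\varepsilon)y\in(\supp f)^{\circ}$, which gives $\langle x,y\rangle\le 1/(1+\varepsilon)$ and therefore $(1-\langle x,y\rangle)_{+}\ge \varepsilon/(1+\varepsilon)$ uniformly for $x\in\supp f$. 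Combining this with a uniform upper bound $\sup_{x}f(x)\le M<+\infty$, one obtains
\[
\L f(y)\ge \frac{\varepsilon}{(1+\varepsilon)M}>0,
\]
so $y\in\supp(\L f)$.

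The main (and essentially only) obstacle is to justify the boundedness of both $\supp f$ and $f$ itself. Both follow from standard properties of non-negative concave functions together with the integrability hypothesis on $f^{m}$: a non-negative concave function cannot blow up at an interior point of its effective domain without being identically $+\infty$ there, and a concave non-negative function with unbounded support cannot have $f^{m}$ integrable, since concavity prevents it from decaying to zero at infinity. Once these standard facts are in place, the two inclusions above combine to give the lemma.
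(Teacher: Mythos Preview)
Your argument is correct, but it proceeds quite differently from the paper. The paper encodes $f$ as the convex body $K(f)=\{(x,t)\in\supp f\times\R:|t|\le f(x)\}\subset\R^{n+1}$, invokes the identity $K(f)^{\circ}=K(\L f)$ already established in Section~2, and then applies the classical duality between sections through the origin and orthogonal projections: since $K(f)\cap e_{n+1}^{\perp}=P_{e_{n+1}^{\perp}}K(f)=\supp f$, one gets $\supp(\L f)=K(\L f)\cap e_{n+1}^{\perp}=(P_{e_{n+1}^{\perp}}K(f))^{\circ}=(\supp f)^{\circ}$ in one line. Your approach instead verifies the two inclusions directly from the definition of $\L f$, using density of $\{f>0\}$ for one direction and boundedness of $f$ and of $\supp f$ for the other. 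Your route is more elementary and self-contained; the paper's is shorter and more conceptual, and fits naturally with the machinery of associated bodies $K_{L,m}(f)$ that the paper develops and uses throughout. Note that both proofs implicitly need $\supp f$ bounded (so that $K(f)$ is a body and $(\supp f)^{\circ}$ has interior), which you correctly flag and justify from the ambient integrability assumption.
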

\begin{proof}
Let $C=\supp(f)$ and $K(f)=\{(x,t)\in C\times\R; |t|\leq f(x)\}$. We know that $K(f)$ is convex in $\R^{n+1}$ and $K(f)^{\circ}=K(\L f)$.
%\begin{eqnarray*}
%K(f)^{\circ}&=&\{(y,s)\in C\times\R; \quad\langle x,y\rangle+\langle s,t\rangle\leq 1, \quad\forall (x,t)\in K(f)\}\\
%&=&\{(y,s)\in C\times\R; \quad\langle x,y\rangle+\langle s,t\rangle\leq 1, \quad\forall (x,t) \mbox{ such that } |t|\leq f(x)\}\\
%&=&\{(y,s)\in C\times\R; \quad\langle s,t\rangle\leq 1-\langle x,y\rangle, \quad\forall(x,t)\mbox{ such that } |t|\leq f(x)\}\\
%&=&\{(y,s)\in C\times\R; \sup_{\{t\in\R;|t|\leq f(x)\}}\langle s,t\rangle\leq 1-\langle x,y\rangle,\quad\forall x\in C\}\\
%&=&\left\{(y,s)\in C\times\R;\quad f(x)|s|\leq (1-\langle x,y\rangle)_+,\quad\forall x\in C\right\}\\
%&=&\left\{(y,s)\in C\times\R ;\quad |s|\leq\frac{(1-\langle x,y\rangle)_+}{f(x)},\quad\forall x\in C\right\}\\
%&=&\left\{(y,s)\in C\times\R ;\quad |s|\leq\L f(y)\right\}\\
%&=&K(\L f).
%\end{eqnarray*}
Since the operation of duality transforms sections of a given convex body into projections of its polar and 
\[
K(f)\cap e_{n+1}^{\perp}=C=P_{e_{n+1}^{\perp}}(K(f)),
\] 
we have
\[
K(\L f)\cap e_{n+1}^{\perp}=K(f)^{\circ}\cap e_{n+1}^{\perp}=\left(P_{e_{n+1}^{\perp}}(K(f))\right)^{\circ}.
\]
Thus, 
\[
\supp\L f=C^{\circ}=(\supp f)^{\circ}.
\]
\end{proof}

\subsection{Case $-\frac{1}{n}<s<0$.} 
Let $m=-\frac{1}{s}>n$. Then there exists a convex function $f:\R^{n}\to(0,+\infty]$ such that $f=g^{s}$. Assume that $0<\int g<+\infty$, one has $\lim_{t\to+\infty}f(tx)=+\infty$, for any $x\neq 0$. Moreover, since $s=-1/m$ and $g=f^{-m}$,
\begin{eqnarray*}
\L_{s}g(y)=\inf_{x\in\R^{n}}\frac{\left(1-s\langle x,y\rangle\right)^{\frac{1}{s}}_{+}}{g(x)}
%=\left(\inf_{x\in\R^{n}}\frac{\left(1+\langle x,y\rangle\right)^{-1}_{+}}{g(x)^{-s}}\right)^{\frac{-1}{s}}
=\left(
\inf_{x\in\R^{n}}\frac{(1+\langle x,\frac{y}{m}\rangle)^{-1}_{+}}{f^{-1}(x)}\right)^{m} 
=\left(\L_{-1}(f^{-1})\left(\frac{y}{m}\right)\right)^{m}.
\end{eqnarray*}
Let us define the function $\M f$, for $y\in\R^{n}$, by
\[
\M f(y)=(\L_{-1}(f^{-1})(y))^{-1}=\sup_{x\in\R^{n}}\frac{1+\langle x,y\rangle}{f(x)}.
\]
It was proved in \cite{FGSZ} that, with these hypotheses, the function $\M f$ belongs to $\F$ (see definition \ref{def:F}).
Using the change of variables $y=mz$, we get
\[
\int_{\R^{n}}\L_{s}g(y)dy=m^{n}\int_{\R^{n}}\left(\L_{-1}(f^{-1})(z)\right)^{m}dz=m^{n}\int_{\R^{n}}\frac{1}{(\M f(z))^{m}}dz.
\]
Consider the operator $Q_{m}$ defined as follows
\[
Q_{m}(f)=\int_{\R^{n}}\frac{1}{f(x)^m}dx\int_{\R^{n}}\frac{1}{(\M f(y))^{m}}dy,
\]
one has
\begin{eqnarray*}
\int_{\R^{n}}g(x)dx\int_{\R^{n}}\L_{s}g(y)dy
%&=&m^{n}\int_{\R^{n}}(f(x))^{\frac{1}{s}}dx\int_{\R^{n}}\frac{1}{(\M f(z))^{m}}dz\\
=m^{n}\int_{\R^{m}}\frac{1}{f(x)^m}dx\int_{\R^{n}}\frac{1}{(\M f(z))^{m}}dz=m^{n}Q_{m}(f).
\end{eqnarray*}
Thus, for $s<0$, Conjecture \ref{mahler-s} can equivalently be formulated in the following form: for any even function $f:\R^{n}\to(0,+\infty]$, convex, such that $\lim_{t\to+\infty}f(tx)=+\infty$, for any $x\neq 0$ and for any $m>n$, one should have 
\begin{equation}\label{eq:conjQmf}
Q_m(f)=\int_{\R^{n}}\frac{1}{f(x)^{m}}dx\int_{\R^{n}}\frac{1}{(\M f(y))^{m}}dy\ge Q_m(1+I_{[-1,1]^n})=\frac{4^n}{(m-1)\cdots(m-n)},
\end{equation}
where, for any $K\subset\R^n$, the function $I_K$ is defined by $I_K(x)=0$ for $x\in K$ and $I_K(x)=+\infty$ for $x\notin K$. 

On the other hand, if $f\in\F$, then it was proved in \cite{FGSZ} that for any $u\in S^{n-1}$ there exists a continuous positive function $a$ such that $f(tu)\ge a(u)t$ for every $t>0$. By compactness of the sphere and continuity of $a$ there exists $c>0$ such that $f(tu)\ge ct$ for every $t>0$. We deduce that $f(x)\ge c\|x\|_2$ for every $x\in \R^n$. Moreover, since $f$ is positive, continuous and goes to infinity at infinity, there exists $c'>0$ such that $f(x)\ge c'$ for every $x\in\R^n$. We conclude that $f(x)\ge\max(c\|x\|_2,c')$. Thus, 
\begin{equation}\label{eq:gint}
g(x)=f(x)^\frac{1}{s}\le \max(c\|x\|_2,c')^\frac{1}{s},
\end{equation}
which is integrable since $-\frac{1}{n}<s<0$. Therefore, for any $f\in\F$, the function $g=f^\frac{1}{s}$ is integrable.

Same as before, Conjecture \ref{mahler-s-F} can be reformulated in the form: for $f\in\F$ even and $m>n$,
\begin{equation}\label{eq:conjQmf-F}
Q_m(f)\ge Q_m(\max(1,\|x\|_\infty))=\frac{m}{m-n}\times\frac{4^n}{(m-1)\cdots(m-n)}.
\end{equation}
And Conjecture \ref{comp-bipolar} reads as: for any even function $f:\R^{n}\to(0,+\infty]$, convex such that $\lim_{t\to+\infty}f(tx)=+\infty$, for any $x\neq 0$ and for any $m>n$, 
\begin{equation}\label{eq:conj-bipolar-f}
\int_{\R^n}\frac{1}{f(x)^m}dx\ge \frac{m-n}{m}\int_{\R^n}\frac{1}{(\M\M f(x))^m}dx,
\end{equation}
with equality for $f=1+I_{[-1,1]^n}$.

Similarly as before, one may easily prove that the operator $Q_{m}$ is invariant under linear transformation.
Let $K\subset\R^{m}$ be a symmetric convex body. For any even convex function $f:\R^{n}\to\R_{+}$ such that $\lim_{x\to\infty}f(tx)=+\infty$, for any $x\neq 0$, we define the function 
\[
\begin{array}{ccccl}
\pi_m(f) & : & \R^{n}\times\R^{m} & \to & \R_{+}\cup\{+\infty\} \\
 & & (x,s) & \mapsto & \pi_m(f)(x,s)=\begin{cases}
  \|s\|_{K} f(\frac{x}{\|s\|_{K}}) & \text{if $s\neq 0$}. \\
  \lim_{s\to 0}\|s\|_{K}f(\frac{x}{\|s\|_{K}}) & \text{if $s=0$}.
\end{cases} \\
\end{array}
\]
We denote the domain of $f$ by $\dom(f)=\{x\in\R^{n}; f(x)<+\infty\}$. Then 
\[
\dom(\pi_m(f))=\{(x,s)\in\R^{n}\times\R^{m}; \pi_m(f)(x,s)<+\infty\}=\{(x,s)\in\R^{n}\times\R^{m}; x\in \|s\|_{K}\dom(f)\}.
\]
Consider the following convex body 
\[
C^{K}_{m}(f)=\{(x,s)\in\R^{n}\times\R^{m}; \pi_m(f)(x,s)\leq 1\}=\overline{\left\{(x,s)\in\R^{n}\times\R^{m}\setminus\{0\}; \|s\|_{K}f\left(\frac{x}{\|s\|_{K}}\right)\leq 1\right\}}.
\]
One has,
\begin{eqnarray*}
 (C^{K}_{m}(f))^{\circ}&=&\{(y,t)\in\R^{n}\times\R^{m}; \langle (x,s),(y,t)\rangle\leq 1, \forall (x,s)\in C^{K}_{m}(f)\}\\
 &=&\left\{(y,t)\in\R^{n}\times\R^{m}; \langle x,y\rangle + \langle s,t\rangle\leq 1, \forall (x,s); \|s\|_{K}f\left(\frac{x}{\|s\|_{K}}\right)\leq 1\right\}.
\end{eqnarray*}
Let $z=\frac{x}{\|s\|_{K}}$, then,
\begin{eqnarray*}
    (C^{K}_{m}(f))^{\circ}&=&\left\{(y,t)\in\R^{n}\times\R^{m}; \|s\|_{K}\langle z,y\rangle+\langle s,t\rangle\leq 1, \forall (z,s), f(z)\leq\frac{1}{\|s\|_{K}}\right\}\\
    &=&\left\{(y,t)\in\R^{n}\times\R^{m};\sup_{\left\{s;\|s\|_{K}\leq1/f(z)\right\}} \|s\|_{K}\langle z,y\rangle +\langle s,t\rangle\leq 1, \forall z\right\}\\
    &=&\left\{(y,t)\in\R^{n}\times\R^{m};\frac{\langle z,y\rangle+\|t\|_{K^\circ}}{f(z)}\leq 1, \forall z\right\}\\
    &=&\left\{(y,t)\in\R^{n}\times\R^{m}; \sup_{z\in\R^{n}}\frac{\langle z,y\rangle+\|t\|_{K^{\circ}}}{f(z)}\leq 1\right\}\\
    %&=&\left\{(y,t)\in\R^{n}\times\R^{m}; \|t\|_{K^\circ}\sup_{z\in\R^{n}}\frac{\left\langle z,\frac{y}{\|t\|_{K^\circ}}\right\rangle+1}{f(z)}\leq 1\right\}\\
    &=&\overline{\left\{(y,t)\in\R^{n}\times\R^{m}\setminus\{0\}; \|t\|_{K^\circ}\M f\left(\frac{y}{\|t\|_{K^\circ}}\right)\leq 1\right\}}\\
    &=& C^{K^\circ}_{m}(\M f).
\end{eqnarray*}
Let us compute the volume of the convex body $C_{m}^{K}(f)$.
\begin{eqnarray*}
    \left|C_{m}^{K}(f)\right|&=&\left|\left\{(x,s)\in\R^{n}\times\R^{m}\setminus\{0\};\|s\|_{K}f\left(\frac{x}{\|s\|_{K}}\right)\leq 1\right\}\right|\\
    &=&\int_{\R^{m}}\left|\left\{x\in\R^{n}; \|s\|_{K}f\left(\frac{x}{\|s\|_{K}}\right)\leq 1\right\}\right|_{n}ds\\
   % &=&\int_{\R^{m}}\left|\left\{z\|s\|_{K}\in\R^{n}; f(z)\leq\frac{1}{\|s\|_{K}}\right\}\right|_{n}\quad ds\\
    &=&\int_{\R^{m}}\|s\|_{K}^{n}\left|\left\{z\in\R^{n};f(z)\leq\frac{1}{\|s\|_{K}}\right\}\right|_{n}ds=\int_{\R^{m}}\varphi(\|v\|_K)dv,
\end{eqnarray*}
where $\f(t)=t^{n}\left|\left\{z\in\R^{n};f(z)\leq\frac{1}{t}\right\}\right|_{n}.$
\forget 
We shall use the following lemma.
\begin{lemma} 
Let $m>0$. Let $K$ be a symmetric convex body in $\R^{m}$ and $\varphi:\R\to\R_{+}$ a function such that $\lim_{t\to+\infty}t^{m}\varphi(t)=0$. Then,
\[
\int_{\R^{m}}\varphi(\|s\|_{K})ds=|K|\int_{0}^{+\infty}mt^{m-1}\varphi(t)dt.
\]
\end{lemma}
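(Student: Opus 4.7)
The plan is to reduce the identity to the single geometric observation that, for every $t\ge 0$, the sublevel set $\{s\in\R^{m}:\|s\|_{K}\le t\}=tK$ has Lebesgue measure $t^{m}|K|$. This says precisely that the pushforward of Lebesgue measure on $\R^{m}$ under the positively $1$-homogeneous map $s\mapsto\|s\|_{K}$ is, on $[0,+\infty)$, absolutely continuous with density $mt^{m-1}|K|$, which is exactly the claim up to a change of variable.

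To make this rigorous while asking no regularity of $\varphi$, I would first check the identity on indicators. For $0\le a<b$, setting $\varphi=\1_{[a,b)}$ gives
\[
\int_{\R^{m}}\1_{[a,b)}(\|s\|_{K})\,ds \;=\; |bK\setminus aK| \;=\; (b^{m}-a^{m})|K| \;=\; |K|\int_{0}^{\infty}mt^{m-1}\1_{[a,b)}(t)\,dt.
\]
By linearity this extends to all non-negative simple functions on $[0,+\infty)$, and then by the monotone convergence theorem to every non-negative measurable $\varphi$, both sides being allowed a priori to take the value $+\infty$.

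An alternative route, closer to how the lemma will actually be applied (to $\varphi(t)=t^{n}|\{z:f(z)\le 1/t\}|_{n}$), assumes that $\varphi$ is absolutely continuous. Then $\varphi(r)=-\int_{r}^{\infty}\varphi'(t)\,dt$, and Fubini gives
\[
\int_{\R^{m}}\varphi(\|s\|_{K})\,ds \;=\; -\int_{0}^{\infty}\varphi'(t)\,\bigl|\{s:\|s\|_{K}\le t\}\bigr|\,dt \;=\; -|K|\int_{0}^{\infty}t^{m}\varphi'(t)\,dt,
\]
after which an integration by parts finishes. In this second approach the hypothesis $\lim_{t\to+\infty}t^{m}\varphi(t)=0$ is exactly what is needed to kill the boundary term at infinity; in the first approach it plays no role in the identity itself but ensures finiteness. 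I do not foresee a real obstacle: the lemma is a standard polar-coordinates computation adapted to the Minkowski functional of $K$, and the only subtlety is calibrating the regularity hypothesis on $\varphi$, which the decay assumption handles.
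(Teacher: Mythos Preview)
Your proposal is correct. Your second route---writing $\varphi(r)=-\int_r^\infty\varphi'(t)\,dt$, applying Fubini to get $\int_0^\infty(-\varphi'(t))|\{s:\|s\|_K\le t\}|\,dt=|K|\int_0^\infty t^m(-\varphi'(t))\,dt$, then integrating by parts using $t^m\varphi(t)\to 0$---is exactly the argument the paper gives. Your first route via indicators and monotone convergence is a legitimate alternative that dispenses with any smoothness assumption on $\varphi$; it is slightly more general, while the paper's approach has the advantage of making explicit where the decay hypothesis enters.
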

%\begin{proof}
\forgotten
Using Fubini and a change of variables, one has
\begin{eqnarray*}
\int_{\R^{m}}\f(\|v\|_{K})dv%&=&\int_{\R^{m}}\left[-\f(t)\right]^{+\infty}_{\|s\|_{K}}ds=\int_{\R^{m}}\int_{\|s\|_{K}}^{+\infty}(-\f'(t)) dt ds
=\int_{0}^{+\infty}(-\f'(t))|\{v\in\R^{m};\|v\|_{K}\leq t\}|dt
= |K|\int_{0}^{+\infty} t^{m}(-\f'(t))dt
%=|K|\left(\left[t^{m}(-\f(t))\right]^{+\infty}_{0}+\int_{0}^{+\infty}mt^{m-1}\f(t)dt\right)\\
=m|K|\int_{0}^{+\infty}t^{m-1}\f(t)dt.
\end{eqnarray*}
%\end{proof}
Hence, we get
\begin{eqnarray*}
    |C_{m}^{K}(f)|%&=&\int_{\R^{m}}\|s\|_{K}^{n}\left|\left\{z\in\R^{n};f(z)\leq\frac{1}{\|s\|_{K}}\right\}\right|_{n}\quad ds\\
    &=& m |K|\int_{0}^{+\infty}t^{m-1}t^{n}\left|\left\{z\in\R^{n};f(z)\leq\frac{1}{t}\right\}\right|_{n} dt\\
    &=&m |K|\int_{0}^{+\infty}\frac{1}{u^{m+n+1}}\left|\left\{z\in\R^{n}; f(z)\leq u\right\}\right| du\\
    %&=&m|K|\int_{0}^{+\infty}\frac{1}{u^{m+n+1}}\int_{\R^{n}}\1_{\{z\in\R^{n}; f(z)\leq u\}}dz du\\
    &=&m|K|\int_{\R^{n}}\int_{f(z)}^{+\infty}\frac{du}{u^{m+n+1}}dz\\
    %&=&\frac{m}{m+n}|K|\int_{\R^{n}}\left[\frac{-1}{u^{m+n}}\right]^{+\infty}_{f(z)}dz\\
    &=&\frac{m}{m+n}|K|\int_{\R^{n}}\frac{dz}{f(z)^{m+n}}.
\end{eqnarray*}
Using that $C_{m}^{K}(f)^{\circ}=C_{m}^{K^\circ}(\M f)$, we get
\[
|C_{m}^{K}(f)^{\circ}|=\frac{m}{m+n}|K^{\circ}|\int_{\R^{n}}\frac{dz}{(\M f(z))^{m+n}}.
\]
Thus, we obtain for any $m>0$
\begin{equation}\label{eq:Mf}
\int_{\R^{n}}\frac{1}{f(x)^{m+n}}dx\int_{\R^{n}}\frac{1}{(\M f(y))^{m+n}}dy=\left(\frac{m+n}{m}\right)^{2}\frac{1}{|K||K^{\circ}|}|C_{m}^{K}(f)||C_{m}^{K}(f)^{\circ}|.
\end{equation}

\section{Proof of the inequalities in dimension 1 and 2 for $s>0$.}

In this section, we start by giving a monotonicity result, which implies Conjecture~\ref{mahler-s} in dimension $1$ where $\frac{1}{s}\in\N$. Recall that Conjecture~\ref{mahler-s} in dimension $1$ was proved in \cite[page 17]{FM10} for any $s>0$. We present here a simpler proof in the case where $\frac{1}{s}\in\N$. Then, we present a proof of Theorem \ref{thm:spos}, which is Mahler's conjecture for $s$-concave functions in the case $n=2$ and $\frac{1}{s}\in\N$. 

Let $m=\frac{1}{s}$ and $g:\R^{n}\to\R_{+}$ be an even $s$-concave function, then there exists an even function $f:\R^{n}\to\R$, concave  on its support, such that $g=f^{m}$. Thus, in dimension $n=1$ and for $1/s\in\N$, Conjecture~\ref{mahler-s} is a consequence of the following theorem.

\begin{theorem}\label{dim1}
Let $m\in\N$, $a>0$, $f:\R\to\R_{+}$ be an even concave function on its support $C=[-a,a]$ and
\[
P_{m}(f)=\int_{\R}f(x)^{m}dx\int_{\R}(\L f(y))^{m}dy.
\]
Then the sequence $((m+1)^2P_m(f)-4m)_m$ is non-decreasing. Hence, one has $P_m(f)\geq\frac{4}{m+1}$.
Moreover, if $P_m(f)=\frac{4}{m+1}$ then either $f$ is constant on its support $[-a,a]$ or there is $c>0$ such that $f(x)=c\left(1-\frac{|x|}{a}\right)_+$.
\end{theorem}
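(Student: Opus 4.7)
My plan: by the linear invariance of $P_m$ established in Section~2, I normalize so that $a=f(0)=1$, so that both $f$ and $\L f$ are even concave maps $[-1,1]\to[0,1]$ with value $1$ at the origin and common support $[-1,1]$ (by Lemma~\ref{supp}). Writing $\phi=f^{-1}$ and $\psi=(\L f)^{-1}$ for the convex decreasing profiles on $[0,1]$, a layer-cake computation followed by integration by parts gives, for every $m\ge 0$, $A_m=2M_m$ and $B_m=2N_m$, where
\[
M_m=\int_0^1 t^m\,d\mu(t),\qquad N_m=\int_0^1 s^m\,d\nu(s),
\]
and $d\mu=-\phi'(t)\,dt+\phi(1)\delta_1$, $d\nu=-\psi'(s)\,ds+\psi(1)\delta_1$ are probability measures on $[0,1]$. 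The pointwise polarity $f(x)\L f(y)\le(1-xy)_+$ translates to the bilinear constraint $ts+\phi(t)\psi(s)\le 1$, tight along a polar curve.

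The base value of the sequence is $R_0(f)=(0+1)^2P_0(f)=A_0B_0=4M_0N_0=4$, and the increment $R_m-R_{m-1}\ge 0$ becomes the bilinear moment inequality
\[
(m+1)^2 M_mN_m-m^2 M_{m-1}N_{m-1}\ \ge\ 1.
\]
For $m=1$ this reads $4M_1N_1\ge 1+M_0N_0=2$, equivalent via $A_1=2M_1,\,B_1=2N_1$ to the two-dimensional Mahler inequality $A_1B_1\ge 2$ applied to the symmetric convex body $K(f)=K_1(f)\subset\R^2$ from Section~2, which provides the initial factor of $4$.

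The main obstacle is proving this bilinear moment inequality for all $m\ge 2$. It is strictly sharper than the bound $M_mN_m\ge 1/(m+1)$ that one obtains by applying Saint-Raymond's inequality for unconditional convex bodies directly to the body $K_m(f)\subset\R^{m+1}$ (which only yields the conclusion $P_m(f)\ge 4/(m+1)$, not the monotonicity), so it must be established by a direct argument exploiting the full polarity relation between $\mu$ and $\nu$ rather than just the pointwise constraint. Once the monotonicity is proved, telescoping $R_m\ge R_0=4$ yields $P_m(f)\ge 4/(m+1)$; for the equality case, the chain of equalities forces either $\mu=\delta_1$ (equivalently $\phi\equiv 1$, so after denormalization $f$ is constant on its support $[-a,a]$) or $\nu=\delta_1$ (equivalently $\psi\equiv 1$, so $\L f$ is constant and by polarity $f(x)=c(1-|x|/a)_+$).
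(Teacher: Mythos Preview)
Your reduction is correct and nicely packaged: after the normalization $a=f(0)=1$, the layer-cake computation does give $A_m=2M_m$, $B_m=2N_m$ with $\mu,\nu$ probability measures on $[0,1]$, and the monotonicity of $((m+1)^2P_m(f)-4m)_m$ is indeed equivalent to the bilinear moment inequality
\[
(m+1)^2 M_mN_m-m^2 M_{m-1}N_{m-1}\ \ge\ 1\qquad(m\ge1).
\]
However, the proposal contains a genuine gap precisely at the main step: for $m\ge2$ you state that this inequality ``must be established by a direct argument exploiting the full polarity relation between $\mu$ and $\nu$'', but you never give that argument. Everything that follows (``Once the monotonicity is proved\dots'') is therefore conditional. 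The equality-case claim that the chain of equalities forces $\mu=\delta_1$ or $\nu=\delta_1$ is likewise asserted without justification.

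The paper fills exactly this gap with a short concavity trick that you are missing. Setting $I_m=(m+1)\int_0^a f^m$ and $J_m=(m+1)\int_0^{1/a}(\L f)^m$, one multiplies the tangent-line inequality $f(x)\le f(t)+(x-t)f'(t)$ by $mf(t)^{m-1}$, integrates in $t$ over $[0,a]$, and integrates by parts, obtaining
\[
f(x)\,I_{m-1}\ \le\ I_m-xf(0)^m\qquad(x\in[-a,a]).
\]
Dividing by $f(x)$ and taking the infimum over $x$ yields $I_{m-1}/I_m\le \L f(y_0)$ with $y_0=f(0)^m/I_m$; the symmetric inequality for $\L f$ gives $J_{m-1}/J_m\le f(x_0)$ with $x_0=\L f(0)^m/J_m$. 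The pointwise polarity $f(x_0)\L f(y_0)\le 1-x_0y_0$ (valid since one first checks $I_mJ_m\ge1$, hence $x_0y_0\le1$) then gives
\[
\frac{I_{m-1}J_{m-1}}{I_mJ_m}\ \le\ 1-\frac{1}{I_mJ_m},
\]
i.e.\ $I_mJ_m-I_{m-1}J_{m-1}\ge1$, which is exactly your missing inequality. Note that this argument starts at $m=0$ with $I_0J_0=1$ and is entirely self-contained: it does not invoke the two-dimensional Mahler inequality for the case $m=1$ as your proposal does. The equality case is then read off from equality in the tangent-line inequality, which forces $f$ to be piecewise affine of a very restricted shape.
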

\begin{proof}
First, using Lemma \ref{supp}, we know that $\supp(\L f)=C^{\circ}=[-\frac{1}{a},\frac{1}{a}]$. 
%Since $f$ and $\L f$ are even, it suffices to prove that 
%\[\int_{0}^{a}f(x)^{m}dx\int_{0}^{\frac{1}{a}}(\L f(y))^{m}dy\geq\frac{1}{m+1}.\]
Set 
\[
 I_{m}=(m+1)\int_{0}^{a}f(x)^{m}dx\quad\mbox{and}\quad J_{m}=(m+1)\int_{0}^{\frac{1}{a}}(\L f(y))^{m}dy.
\]
The monotonicity that we want to prove reduces to $I_{m}J_{m}\geq m+1$, for every $m\in\N$. The inequality holds for $m=0$ (there is equality). Let $m\in\N^*$. Being concave on $[-a,a]$, the function $f$ is left differentiable on $(-a,a]$. By concavity, denoting by $f'(t)$ the left derivative of $f$ at $t$, we have for all $t\in(-a,a]$ and $x\in[-a,a]$
 \begin{eqnarray}\label{eq:f-f'-dim1}
 f(x)\leq f(t)+(x-t)f'(t).
 \end{eqnarray}
multiplying both sides of the inequality by $mf(t)^{m-1}$ and integrating on $[0,a]$  gives
 \[
f(x)I_{m-1}= f(x)\int_{0}^{a}mf(t)^{m-1}dt\leq \int_{0}^{a}mf(t)^mdt+\int_{0}^{a}m(x-t)f'(t)f(t)^{m-1}dt.
 \]
 Integration by parts gives that for every $x\in[-a,a]$
 \begin{eqnarray*}
 \int_{0}^{a}m(x-t)f'(t)f(t)^{m-1}dt
 %&=&\left[ (x-t)\frac{f(t)^m}{m}\right]^{a}_{0}+\int_{0}^{a}\frac{f(t)^m}{m}dt\\
 =(x-a)f(a)^{m}-xf(0)^{m}+\int_{0}^{a}f(t)^mdt.
 \end{eqnarray*}
Using that $(x-a)f(a)^{m}\le 0$, we get that for all $x\in[-a,a]$,
\begin{equation}\label{ineq-Im}
f(x)I_{m-1}\leq I_m-xf(0)^{m}.
\end{equation}
Hence, one has for all $x\in[-a,a]$
\[
\frac{I_{m-1}}{I_{m}}\leq\frac{1}{f(x)}\left(1-\frac{xf(0)^{m}}{I_{m}}\right).
\]
By taking the infinimum over all $x\in[-a,a]$, and denoting $y_0=\frac{f(0)^{m}}{I_{m}}$ we get
\[
\frac{I_{m-1}}{I_{m}}\leq\L f\left(y_0\right).
\]
Since $\L f$ is concave, we can apply the result to $\L f$ and, denoting $x_0=\frac{(\L f(0))^{m}}{J_{m}}$, we get
\[
\frac{J_{m-1}}{J_{m}}\leq f\left(x_0\right).
\]
On the other hand, the definition of $\L f$ implies that for all $x\in\supp f$, $y\in\supp\L f$
\[
f(x)\L f(y)\leq (1-xy)_{+}.
\]
Using \eqref{ineq-Im} for $x=0$ gives that $I_m\ge f(0)I_{m-1}$, thus $I_m\ge f(0)^ma$. In the same way, one has $J_m\ge \L f(0)^ma^{-1}$. Multiplying these inequalities and using that $f$ is even and concave, we get $\L f(0)=\max(f)^{-1}=f(0)^{-1}$, we conclude that $I_mJ_m\ge1$, which implies that $x_0y_0\le1$.
Hence, one has
\begin{eqnarray}\label{eq:Im-Jm-x_0}
\frac{I_{m-1}J_{m-1}}{I_{m}J_{m}}\leq f\left(x_0\right)\L f\left(y_0\right)\leq 1-x_0y_0= 1-\frac{1}{I_{m}J_{m}}.
\end{eqnarray}
Finally we get $I_{m-1}J_{m-1}\leq I_{m}J_{m}-1$. Hence the sequence $(I_mJ_m-m)_m$ is non-decreasing. Since $I_{m}J_{m}=\frac{(m+1)^2}{4}P_m(f)$, 
we deduce that the sequence $u_m=(m+1)^2P_m(f)-4m$ is non-decreasing. It follows that $u_m\ge u_0=P_0(f)=4$, hence $P_m(f)\ge\frac{4}{m+1}$, for any integer $m$.

Assume now that there is equality, for some integer $m$: $P_m(f)=\frac{4}{m+1}$. Then, there is equality in \eqref{eq:Im-Jm-x_0}. It follows that there is equality in \eqref{ineq-Im} for $x=x_0$ and also $(x_0-a)f(a)^{m}=0$. Moreover, there is also equality in the corresponding inequality for $\L f$ instead of $f$ and $y_0$ instead of $x_0$, thus $(y_0-\frac{1}{a})\L f(1/a)^m=0$.
In addition, there is equality in \eqref{eq:f-f'-dim1} for $x=x_0$ and for almost any $t\in[-a,a]$: 
\[
f(t)= f(x_0)+(t-x_0)f'(t).
\]
On the other hand, one has $f(t)=f(x_0)+\int_{x_0}^tf'(u)du$. Since the left derivative $f'$ is non-increasing, we deduce that $f'$ is constant on $[0,x_0)$ and on $(x_0,a]$, which means that $f$ is affine on on $[0,x_0)$ and on $(x_0,a]$. In the same way, $\L f$ is affine on $[0,y_0)$ and on $(y_0,1/a]$.

If $x_0=a$, then $f$ is affine on $[0,a]$ and $J_m=\L f(0)^m/a$. But, by concavity, one has for every $y\in[0,1/a]$
\[
\L f(y)\ge\L f(0)(1-ay).
\]
Raising this inequality to the power $m$ and integrating, one gets that $J_m\ge\L f(0)^m/a$. Since there is equality we deduce that $\L f(y)=\L f(0)(1-ay)$,  for every $y\in[0,1/a]$. Thus $f=f(0)\1_{[-a,a]}$. 

If $x_0\neq a$, then $f(a)=0$. Thus there exists $0\le b\le1/a$ and $c>0$ such that, for $t\ge0$,
\[
f(t)=c(1-bt)\1_{[0,x_0]}+c(1-bx_0)\frac{a-x}{a-x_0}\1_{(x_0,a]}.
\]
This implies that, for any $y\ge0$, 
\[
\L f(y)=\frac{1}{c}\left(\1_{[0,b]}(y)+\frac{1-x_0y}{1-bx_0}\1_{(b,\frac{1}{a}]}(y) \right).
\]
Since $x_0\neq a$, one has $\L f(1/a)\neq0$, hence $y_0=1/a$. This gives $I_m=af(0)^m$ and this implies as before that $f(x)=f(0)(1-\frac{x}{a})$, for all $x\in[0,a]$. 
\forget
A simple computation gives 
\[
\int_0^{+\infty}f^m=\frac{c^m}{(m+1)b}\left(1-\left(1-ab\right)^{m+1} \right)\quad\hbox{and}\quad 
\int_0^{+\infty}(\L f)^m=\frac{1}{c^m}\left(ab+\frac{1}{m+1}(1-ab)\right).
\]
Hence, denoting $x=1-ab\in[0,1]$, we obtain that 
\[
P_m(f)=\frac{4(m+1-mx)}{(m+1)^2}\times\frac{1-x^{m+1}}{1-x}.
\]
Since we assumed that $f$ satisfies the equality case in the inequality $P_m(f)\ge4/(m+1)$, it means that $x\in[0,1]$ satisfies the equality case in the inequality 
\[
\frac{1}{1+x+\cdots +x^m}=\frac{1-x}{1-x^{m+1}}\le 1-\frac{m}{m+1}x.
\]
But from the strict convexity of the left hand side and the fact that there is equality for $x=0$ and $x=1$, the equality cases can only occur for $x=0$ or $x=1$. These cases correspond to $b=1/a$ or $b=0$ and this establishes the claimed equality cases.   
\forgotten 

\end{proof}
Let us prove the 2-dimensional case of our inequality for $m$ being an integer. 
\begin{theorem}\label{dim2}
Let $m\in\N$ and $f:\R^{2}\to\R_{+}$ be an even function, concave on its support, such that $0<\int f<+\infty$. Then,
$$
P_{m}(f)\geq \frac{16}{(m+1)(m+2)}.
$$
Moreover, there is equality for $f=\1_{[-1,1]^2}$.
\end{theorem}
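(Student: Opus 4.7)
The plan is to deduce Theorem~\ref{dim2} from Proposition~\ref{prop:Mahler} applied to the convex body $K = K_{L,m}(f) \subset \R^{m+2}$ from Section~2, with $L = [-1,1]^m$. Since $P(K) = P(L)\,P_m(f) = \tfrac{4^m}{m!}\,P_m(f)$ and $\tfrac{4^{m+2}}{(m+2)!} = \tfrac{4^m}{m!} \cdot \tfrac{16}{(m+1)(m+2)}$, the bound $P(K) \geq \tfrac{4^{m+2}}{(m+2)!}$ is equivalent to the desired inequality. I would argue by induction on $m$; the base case $m = 0$ reduces to $|\supp f|\cdot|\supp \L f| = |C|\cdot|C^\circ| \geq 8$, which is Mahler's theorem in $\R^2$ via Lemma~\ref{supp}.

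The first step is an equipartition lemma for $\R^2$: for any two even, non-negative, integrable functions $h_1, h_2$ on $\R^2$, there exist two lines through the origin whose four open cones each carry a quarter of $\int h_i$ for $i = 1,2$. Applied to $h_1 = f^m$ and $h_2 = f^{m-1}$, and using the linear invariance of $P_m$ to transport these lines to the coordinate axes of $\R^2$, I may assume $f$ is $m$-equipartitioned in the sense of the definition given in the introduction.

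With this coordinate choice, I would apply Proposition~\ref{prop:Mahler} to $K$ using the $m+2$ coordinate hyperplanes $\{x_j = 0\}$ ($j=1,2$) and $\{t_i = 0\}$ ($i = 1,\dots,m$) of $\R^{m+2} = \R^2 \times \R^m$. Integrating out $t$ first (using the unconditionality of $L$), the piece in the orthant indexed by $(\e,\delta) \in \{\pm 1\}^2 \times \{\pm 1\}^m$ has volume $\int_{\R^2_\e} f(x)^m\, dx = \tfrac{1}{4}\int f^m$ by the $m$-equipartition, giving an equal-volume partition of $K$ into $2^{m+2}$ pieces. The equipartition of each coordinate section by the remaining $m+1$ hyperplanes is verified identically: for $\{t_i = 0\}$-sections one uses the $(m-1)$-equipartition of $f^{m-1}$, and for $\{x_j = 0\}$-sections one uses the fact that $\bar f := f|_{\{x_j = 0\}}$ is even together with the unconditionality of $L$.

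It remains to check Mahler's conjecture for each section as a convex body in $\R^{m+1}$. The section $K \cap \{x_j = 0\}$ equals $K_{L,m}(\bar f)$; Theorem~\ref{dim1} applied to the one-dimensional even concave function $\bar f$ gives $P_m(\bar f) \geq \tfrac{4}{m+1}$, whence $P(K_{L,m}(\bar f)) = \tfrac{4^m}{m!} P_m(\bar f) \geq \tfrac{4^{m+1}}{(m+1)!}$. The section $K \cap \{t_i = 0\}$ equals $K_{L', m-1}(f)$ with $L' = [-1,1]^{m-1}$, and the required bound $P(K_{L', m-1}(f)) \geq \tfrac{4^{m+1}}{(m+1)!}$ reduces to $P_{m-1}(f) \geq \tfrac{16}{m(m+1)}$, which is the inductive hypothesis. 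Proposition~\ref{prop:Mahler} then yields $P(K) \geq \tfrac{4^{m+2}}{(m+2)!}$ and hence the theorem. I expect the main obstacle to be a rigorous proof of the equipartition lemma, which requires a careful topological argument (ham-sandwich style) on the space of unordered pairs of lines through the origin in $\R^2$; the equality case $f = \1_{[-1,1]^2}$ then follows from the direct computation $\int(\L f)^m = \tfrac{4}{(m+1)(m+2)}$.
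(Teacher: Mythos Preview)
Your proposal is correct and follows essentially the same approach as the paper: construct $K_{B_\infty^m,m}(f)\subset\R^{m+2}$, reduce to an $m$-equipartition of $f$ via a linear map, and verify the hypotheses of Proposition~\ref{prop:Mahler} by induction on $m$, using Theorem~\ref{dim1} for the $\{x_j=0\}$-sections and the inductive hypothesis for the $\{t_i=0\}$-sections. The one point worth noting is that the equipartition lemma you flag as the main obstacle is in fact Lemma~\ref{lem:m-equip} of the paper and has an elementary intermediate-value-theorem proof (parametrize the first line by $u\in S^1$, define $v(u)$ by the quarter-mass condition on $f^m$, check $v(v(u))=-u$, and apply IVT to the signed discrepancy for $f^{m-1}$); no ham-sandwich machinery is needed.
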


As noticed before, this theorem may be reformulated in the following form: for any integer $m>0$ and any even function $g:\R^2\to\R_+$ that is $\frac{1}{m}$-concave, integrable with $\int_{\R^{2}}g>0$ one has 
\begin{eqnarray}\label{eq:m-concave}
\int_{\R^{2}}g\int_{\R^{2}}\L_{\frac{1}{m}}g\geq\frac{16m^2}{(m+1)(m+2)}. 
\end{eqnarray}
The following corollary proves that letting $m$ go to infinity we get a new proof of Mahler conjecture for even log-concave functions, which was our previous main result in \cite{FN}.

\begin{corollary}\label{cor:FN23}
Let $f:\R^{2}\to\R$ be an even log-concave function such that $0<\int f<+\infty$, then 
\begin{eqnarray}\label{eq:FN}
\int_{\R^{2}}f(x)dx\int_{\R^{2}}f^{\circ}(y)dy\geq 16,
\end{eqnarray}
where $f^{\circ}(y)=\inf_{x\in\R^{n}}\frac{e^{-\langle x,y\rangle}}{f(x)}
$ is the polar function of $f$.

\end{corollary}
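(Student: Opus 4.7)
The plan is to deduce the log-concave corollary by passing to the limit $m\to\infty$ in the $1/m$-concave inequality \eqref{eq:m-concave}. After the standard normalization $f(0)=\sup f = 1$ (the maximum is attained at the origin by evenness and log-concavity), I would write $f=e^{-\varphi}$ with $\varphi$ convex, even, and $\varphi(0)=0$. The natural approximant is
\[
g_m(x) = \left(1-\frac{\varphi(x)}{m}\right)_+^{m},\qquad m\in\N,
\]
which is even and $1/m$-concave since $g_m^{1/m}=(1-\varphi/m)_+$ is the positive part of a concave function. The classical fact that $(1-t/m)_+^m$ is nondecreasing in $m$ with limit $e^{-t}$ for every $t\ge 0$ gives $g_m\uparrow f$ pointwise, whence monotone convergence yields $\int g_m\to\int f$.

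Applying inequality \eqref{eq:m-concave} to $g_m$ gives $\int_{\R^2} g_m\int_{\R^2}\L_{1/m}g_m \ge 16m^2/((m+1)(m+2))$. The crucial observation is the pointwise upper bound
\[
\L_{1/m}g_m(y)=\inf_{x}\frac{(1-\langle x,y\rangle/m)_+^m}{g_m(x)}\;\le\;\inf_{x}\frac{e^{-\langle x,y\rangle}}{g_m(x)}=\L_0 g_m(y),
\]
which follows from the elementary inequality $(1-t/m)_+^m\le e^{-t}$, valid for every real $t$. Hence $\int_{\R^2} g_m\int_{\R^2}\L_0 g_m\ge 16m^2/((m+1)(m+2))$, whose right-hand side tends to $16$ as $m\to\infty$.

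It remains to show that $\int \L_0 g_m\to\int f^\circ$. Since $g_m\uparrow f$ pointwise, the quotients $e^{-\langle x,y\rangle}/g_m(x)$ decrease in $m$ to $e^{-\langle x,y\rangle}/f(x)$, and interchanging $\inf_x$ with the monotone limit gives $\L_0 g_m(y)\downarrow f^\circ(y)$ for every $y$. For an integrable dominating function, note that $\{\varphi\le 1\}$ is bounded (as $\int f<\infty$) and contains a ball $B_r$ (as $\varphi(0)=0$); choosing $x=ry/|y|$ then yields $\L_0 g_1(y)\le e^{-r|y|}/\delta$ with $\delta=\min_{|x|=r}g_1(x)>0$, so $\L_0 g_1\in L^1(\R^2)$. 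Dominated convergence gives $\int\L_0 g_m\to\int f^\circ$, and letting $m\to\infty$ in the previous display proves \eqref{eq:FN}. The only delicate point is securing this integrable dominating function for the polar sequence, which is settled by the exponential decay of $\L_0 g_1$ coming from $0\in\inte(\supp g_1)$.
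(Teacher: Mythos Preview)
Your proof is correct and follows the same route as the paper: both use the approximants $g_m=(1-\varphi/m)_+^m$ (the paper writes them as $f_m=(1+\log f/m)_+^m$, which is the same thing), apply \eqref{eq:m-concave}, bound $\L_{1/m}g_m\le g_m^\circ$ via $(1-t/m)_+^m\le e^{-t}$, and let $m\to\infty$. The only difference is in justifying the convergence of the polar integrals: the paper invokes Lemma~3.2 of \cite{AKM} for $\int g_m^\circ\to\int f^\circ$, while you give a self-contained dominated-convergence argument using the explicit bound $\L_0 g_1(y)\le e^{-r|y|}/\delta$, and your use of monotone convergence for $\int g_m\to\int f$ (from $g_m\uparrow f$) is slightly cleaner than the paper's appeal to local uniform convergence.
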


\begin{proof}
Let $f$ be an even log-concave function and consider the function $f_{m}$ defined as follows
\[
f_{m}(x)=\left(1+\frac{\log f(x)}{m}\right)^{m}_{+}.
\]
First, notice that the log-concavity of $f$ implies the $\frac{1}{m}$-concavity of $f_{m}$. Since for all $x$, $e^{-x}\ge 1-x $, we get
\[
\left(1-\frac{1}{m}\langle x,y\rangle\right)^{m}\le e^{-\langle x,y\rangle}.
\]
Thus, $\L_{\frac{1}{m}}f\leq\L_{0}f=f^{\circ}$. In addition, it is easy to see that  $f_{m}\leq f$ for any $m>0$, and since a log-concave function
is continuous on its support one has $\lim_{m\to\infty}f_{m}=f$ locally uniformly on $\R^{2}$. Hence, 
%by the dominated convergence theorem, 
we can conclude that 
$\lim_{m\to\infty}\int_{\R^{2}}f_{m}=\int_{\R^{2}}f$.
%Let us recall that for a log-concave function $f$ we define its
%dual (or polar) as
%\[
%f^{\circ}(y)=\inf_{x\in\R^{n}}\frac{e^{-\langle x,y\rangle}}{f(x)}.
%\]
The fact that $\L_{\frac{1}{m}} f\leq f^{\circ}$ implies that $\L_{\frac{1}{m}}f_{m}\leq f_{m}^{\circ}$. Moreover, one has
%In addition, since $f_{m}\leq f$, then $\L_{m}f\leq \L_{m}f_{m}$. 
\[
\frac{16m^2}{(m+1)(m+2)}\leq \int_{\R^{2}}f_{m}\int_{\R^{2}}\L_{\frac{1}{m}}f_{m}\leq\int_{\R^{2}}f_{m}\int_{\R^{2}}f_{m}^{\circ}.
\]
On the other hand, since the functions $f_{m}$ are log-concave and $\lim_{m\to\infty}f_{m}=f$, then using lemma 3.2 in \cite{AKM}, we know that $\lim_{m\to\infty}f_{m}^{\circ}=f^{\circ}$ locally uniformly on the interior of the support of $f^{\circ}$. Applying the same lemma again on the log-concave function $f_{m}^{\circ}$, we get that $\lim_{m\to\infty}\int_{\R^{2}}f_{m}^{\circ}=\int_{\R^{2}}f^{\circ}$.
Thus, we conclude that
\[
\int_{\R^{2}}f\int_{\R^{2}}f^{\circ}=\lim_{m\to\infty}\int_{\R^{2}}f_{m}\int_{\R^{2}}f^{\circ}_{m}\geq 16. 
\]
\end{proof}
\begin{remark}
Notice that we don't get the equality case in this way. Recall that, in \cite{FN}, it was proved that there is equality in \eqref{eq:FN} if and only if there exists two Hanner polytopes  $K_1\subset F_1$ and $K_2\subset F_2$, where $F_1$ and $F_2$ are two supplementary subspaces in $\R^{2}$, with $0\le\dim(F_i)\le2$, such that for all $(x_1,x_{2})\in F_1\times F_{2}$
\[
f(x_{1},x_{2})=e^{-\|x_{1}\|_{K_{1}}}\1_{K_{2}}(x_{2}).
\]
It would be natural to conjecture that a concave function $f$ on its support satisfies the equality case in \eqref{eq:m-concave} if and only if 
\[
f(x_{1},x_{2})=(1-\|x_{1}\|_{K_{1}})_+\1_{K_{2}}(x_{2}),
\]
with $K_1, K_2, F_1, F_2$ as before. It is easy to see that such functions satisfy the equality case. Nevertheless, clearly, the function $f(x_1,x_2)=(1-|x_1|)_+\1_{B_1^2}(x_1,x_2)$ satisfies also the equality case but cannot be written in this form. 
\end{remark}
For the proof of theorem \ref{dim2}, let us start by proving that every even concave function $f:\R^{2}\to\R_{+}$ can be $m$-equipartitioned by the canonical basis of $\R^{2}$.
\begin{lemma}\label{lem:m-equip}
Let $m>0$ and $f:\R^{2}\to\R_{+}$ be an even concave function such that $0<\int_{\R^{2}}f<+\infty$. Then there exists a linear invertible map $T:\R^{2}\to\R^{2}$ such that the function $f\circ T$ is $m$-equipartitioned. 
\end{lemma}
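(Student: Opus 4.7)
The plan is to reduce the lemma to finding two lines $L_1,L_2$ through the origin whose four open cones simultaneously equipartition both of the measures $f^m\,dx$ and $f^{m-1}\,dx$. Any invertible linear map $T$ sending the coordinate axes onto $L_1,L_2$ will then work, since the quadrants $T(\R^2_\varepsilon)$ run through those four cones as $\varepsilon$ varies in $\{-1,1\}^2$, and the Jacobian $|\det T|^{-1}$ affects each quadrant in the same way. Because $f$ is even, opposite cones always carry the same $f^k$-mass, so for each $k\in\{m-1,m\}$ it suffices to equalize the masses of two adjacent cones. Note also that $f$ concave on its support with $0<\int f<\infty$ forces that support to be a bounded symmetric convex set of positive area, hence $0$ lies in its interior and $f>0$ on some open neighbourhood of the origin.

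Parameterize each unoriented line through the origin by an angle $\theta\in\R/\pi\Z$, and for $\theta_1\neq\theta_2$ let $C_1(\theta_1,\theta_2)$ and $C_2(\theta_1,\theta_2)$ be the two adjacent open cones they delimit. For fixed $\theta_1$, the map $\theta_2\mapsto\int_{C_1(\theta_1,\theta_2)}f^m$ is continuous and strictly increases from $0$ to $\tfrac12\int f^m$ as $\theta_2$ sweeps across $(\theta_1,\theta_1+\pi)$, because any thin angular sliver based at the origin meets the open set $\{f>0\}$. The intermediate value theorem thus produces a unique $\phi(\theta_1)$ with $\int_{C_1(\theta_1,\phi(\theta_1))}f^m=\tfrac14\int f^m$, and the map $\phi:\R/\pi\Z\to\R/\pi\Z$ is continuous; it is also involutive, since $(\theta_1,\phi(\theta_1))$ gives an $f^m$-equipartition if and only if $(\phi(\theta_1),\theta_1+\pi)$ does, and modulo $\pi$ this second pair is $(\phi(\theta_1),\theta_1)$.

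Now define the continuous function
\[
h(\theta)=\int_{C_1(\theta,\phi(\theta))}f^{m-1}-\int_{C_2(\theta,\phi(\theta))}f^{m-1}
\]
on $\R/\pi\Z$. The identity $\phi\circ\phi=\mathrm{id}$ combined with evenness of $f$ yields the antipodal relation $h(\phi(\theta))=-h(\theta)$: applying $\phi$ replaces the ordered pair $(C_1,C_2)$ by $(C_2,-C_1)$, and $-C_1$ has the same $f^{m-1}$-integral as $C_1$ by evenness. Along any continuous path from $\theta_0$ to $\phi(\theta_0)$ in $\R/\pi\Z$, the function $h$ therefore moves from $h(\theta_0)$ to $-h(\theta_0)$, so by the intermediate value theorem $h$ vanishes at some $\theta^*$; the lines at angles $\theta^*$ and $\phi(\theta^*)$ then give the desired equipartition. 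The main technical point I expect is exactly the strict monotonicity that makes $\phi$ a well-defined continuous involution, which ultimately rests on the positivity of $f$ on an open neighbourhood of $0$; once this is in place, the rest is a clean antipodal intermediate value argument.
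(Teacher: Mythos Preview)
Your proposal is correct and follows essentially the same approach as the paper: for each direction you find the unique second line equipartitioning $\int f^m$, then define a continuous ``defect'' function for $\int f^{m-1}$ that changes sign under the resulting involution, and conclude by the intermediate value theorem. The only cosmetic differences are that the paper parameterizes rays by $u\in S^1$ (with the involution reading $v(v(u))=-u$) rather than lines by $\theta\in\R/\pi\Z$, and defines the defect as $g(u)=\int_{C(u,v(u))}f^{m-1}-\tfrac14\int f^{m-1}$ rather than as a difference of two adjacent cones; since the two adjacent cones together carry half the total mass, your $h$ is just $2g$ up to normalization.
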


\begin{proof}
The proof follows the proof of Lemma 4.2 in our previous work \cite{FN} but we reproduce it for completeness.
For any $u\in S^1$, let $C(u)\subset S^1$ be the open half-circle delimited by $u$ and $-u$ containing the vectors $v$ which are after $u$ with respect to the counterclockwise orientation of $S^1$. For $v\in C(u)$, let  $C(u,v)=\R_+u+\R_+v$ be the cone generated by $u$ and $v$ and define 
$$
f_u(v)=\int_{C(u,v)}f(x)^mdx.
$$
The map $f_u$ is continuous and increasing on $C(u)$, $f_u(u)=0$ and, since $f$ is even,  $f_u(v)\longrightarrow\frac{1}{2}\int_{\R^{2}}f^{m}(x)dx$ when $v\longrightarrow -u$. Thus, there exists a unique $v(u)\in C(u)$ such that 
$$
f_u(v(u))=\int_{C(u,v(u))}f^{m}(x)dx=\frac{1}{4}\int_{\R^2}f^{m}(x)dx.
$$
Notice that $v:S^1\to S^1$ is continuous and, since $f$ is even, one has 
$$
\int_{C(v(u),-u)}f^{m}(x)dx=\frac{1}{4}\int_{\R^{2}}f^{m}(x)dx,
$$
thus $v(v(u))=-u$ for any $u\in S^1$.
For $u\in S^1$, let  
$$
g(u)=\int_{C(u,v(u))}f^{m-1}(x)dx-\frac{1}{4}\int_{\R^{2}}f^{m-1}(x)dx.
$$
Then, $g$ is continuous on $S^1$ and, since $f$ is even, 
\begin{eqnarray*}
g(v(u))&=&\int_{C(v(u),-u)}f^{m-1}(x)dx-\frac{1}{4}\int_{\R^{2}}f^{m-1}(x)dx\\
&=&\frac{1}{2}\int_{\R^{2}}f^{m-1}(x)dx-\int_{C(u,v(u))}f^{m-1}(x)dx-\frac{1}{4}\int_{\R^{2}}f^{m-1}(x)dx\\
&=&\frac{1}{4}\int_{\R^{2}}f^{m-1}(x)dx-\int_{C(u,v(u))}f^{m-1}(x)dx\\
&=&-g(u).
\end{eqnarray*}
Hence, by the intermediate value theorem, there exists $u\in S^1$ such that $g(u)=0$, thus 
\[\int_{C(u,v(u))}f^{m-1}=\frac{1}{4}\int_{\R^{2}}f^{m-1}\quad\hbox{and}\quad \int_{C(u,v(u))}f^{m}=\frac{1}{4}\int_{\R^{2}}f^{m}.
\]
    
\end{proof}
%Let $f:\R^{n}\to\R_{+}$ be an even concave function such that $0<\int_{\R^{n}}f(x)dx<+\infty$ and 
Let us proceed to the proof of our main theorem.
\begin{proof}[\bf{Proof of Theorem \ref{dim2}}] Let $f:\R^{2}\to\R_{+}$ be an even concave function such that $0<\int_{\R^{2}}f<+\infty$. Let $L=B_{\infty}^{m}$. For simplicity, the convex body $K_{B_\infty^m,m}(f)$ introduced in section 2 will be denoted in this case by $K_{\infty,m}(f)$ and its dual $K_{B_1^m,m}(\L f)$ will be denoted by $K_{1,m}(\L f)$. Thus, we know that
\[
P(K_{\infty,m}(f))=P(B_{\infty}^{m})P_{m}(f)=\frac{4^{m}}{m!}P_{m}(f).
\]
The proof of the theorem concludes if $K_{\infty,m}$ verifies Mahler's conjecture in dimension $m+2$, i.e. $P(K_{\infty,m})\geq\frac{4^{m+2}}{(m+2)!}$. For that, it is enough to prove that the convex body $K_{\infty,m}$ verifies the hypotheses of Proposition \ref{prop:Mahler}. We proceed by induction on $m\in\N$. Set, for any positive integer $k$ and for any  $\varepsilon\in\{-1,1\}^{k}$, $\R^{k}_{\varepsilon}=\{x\in\R^{k};\varepsilon_{i}x_{i}\geq 0,\forall i\in\{1,...,k\}\}$.
\begin{itemize}
\item Let us start by proving that $K_{\infty,m}(f)$ can be partitioned into $2^{m+2}$ pieces of the same volume. Since $f$ is $m$-equipartitioned, we get
\begin{eqnarray*}
    \left|K_{\infty,m}(f)\cap\R^{m+2}_{+}\right|&=&\left|\{(x,t)\in\R^{2}_{+}\times\R_{+}^{m};\|t\|_{\infty}\leq f(x)\}\right|
    =\int_{\R^{2}_{+}}f(x)^{m}dx|B_{\infty}^{m}\cap\R_{+}^{m}|\\
    &=&\int_{\R^{2}_{+}}f(x)^{m}dx=\frac{1}{2^{2}|B_{\infty}^{m}|}\int_{\R^{2}}f(x)^{m}|B_{\infty}^{m}|dx
    =\frac{1}{2^{m+2}}|K_{\infty,m}(f)|.
\end{eqnarray*}
Using the fact that $\int_{\R^{2}_{+}}f(x)^{m}dx=\int_{\R_{+}\times\R_{-}}f(x)^{m}dx$, we get, 
\[
|K_{\infty,m}(f)\cap\R_{\e}^{m+2}|=\frac{1}{2^{m+2}}|K_{\infty,m}(f)|\quad\forall\e\in\{-1,1\}^{m+2}.
\]
\item Our next step is to prove that $K_{\infty,m}(f)\cap e_{i}^{\perp}$ verifies Mahler's conjecture for all $i\in\{1,...,m+2\}$.\\
For $i=1$ or $i=2$, the same argument holds so let us assume that $i=1$. One has
\[
K_{\infty,m}(f)\cap e_{1}^{\perp}=\{(0,x_{2},t)\in\R^{2}\times\R^{m};\|t\|_{\infty}\leq f(0,x_{2})\}.
\]
Thus,
\[
|K_{\infty,m}(f)\cap e_{1}^{\perp}|=2^{m}\int_{\R}f(0,x_{2})^{m}dx_{2}.
\]
Since, the dual of a section is the projection of the dual, one has
\[
\left(K_{\infty,m}(f)\cap e_{1}^{\perp}\right)^{\circ}=\left(K_{\infty,m}\left(f_{|e_{1}^{\perp}}\right)\right)^{\circ}=K_{1,m}\left(\L\left(f_{|e_{1}^{\perp}}\right)\right).
\]
Thus
\[
\left|\left(K_{\infty,m}(f)\cap e_{1}^{\perp}\right)^{\circ}\right|=\frac{2^{m}}{m!}\int_{\R}\left(\L\left(f_{|e_{1}^{\perp}}\right)(y)\right)^{m}dy.
\]
Our Theorem \ref{dim1} in dimension $1$ implies that
\[
\int_{\R}(f(0,x_{2}))^{m}dx_{2}\int_{\R}(\L(f_{|e_{1}^{\perp}})(y))^{m}dy\geq\frac{4}{m+1}.
\]
Hence,
\[
P(K_{\infty,m}(f)\cap e_{1}^{\perp})=\frac{4^{m}}{m!}\int_{\R}(f(0,x_{2}))^{m}dx_{2}\int_{\R}(\L(f_{|e_{1}^{\perp}})(y))^{m}dy\geq\frac{4^{m+1}}{(m+1)!}.
\]
 which implies that $K_{\infty,m}(f)\cap e_{1}^{\perp}$ verifies Mahler's conjecture in $\R^{m+1}$.\\
 Now, for $i\ge3$, again, by symmetry, we may assume that $i=3$. One has
 \[
 K_{\infty,m}(f)\cap e_{3}^{\perp}=\{(x,t)\in\R^{2}\times(\R^{m}\cap e_{3}^{\perp});\|t\|_{\infty}\leq f(x)\}=K_{\infty,m-1}(f).
 \]
 Hence,
\begin{eqnarray*}
P(K_{\infty,m}(f)\cap e_{3}^{\perp})&=&P(K_{\infty,m-1}(f))=\frac{4^{m-1}}{(m-1)!}\int_{\R^{2}}f(x)^{m-1}dx\int_{\R^{2}}(\L f(y))^{m-1}dy\\
&\geq&\frac{4^{m-1}}{(m-1)!}\frac{16}{m(m+1)}=\frac{4^{m+1}}{(m+1)!}.
\end{eqnarray*}
where the last inequality occurs by the induction hypothesis. 

\item Finally, let us show that $K_{\infty,m}(f)\cap e_{i}^{\perp}$, for any $i\in\{1,...,m+1\}$, can be partitioned into $2^{m+1}$ pieces of the same $(m+1)$-volume. For $i=1$ (and the same follows for $i=2$), since $f$ is even, one has
\begin{eqnarray*} 
\left|K_{\infty,m}(f)\cap e_{1}^{\perp}\cap\R^{m+1}_{+}\right|_{m+1}
&=&\left|\{(0,x_{2},t)\in\R^{2}_{+}\times\R^{m}_{+};\|t\|_{\infty}\leq f(0,x_{2})\}\right|_{m+1}\\
&=&\int_{0}^{+\infty}\left|f(0,x_{2})B_{\infty}^{m}\cap\R^{m}_{+}\right|_{m}dx_{2}\\
&=&\int_{0}^{\infty}f(0,x_{2})^{m}dx_{2}\\
%&=&\frac{1}{2}\int_{\R}f(0,x_{2})^{m}dx_{2}\\
&=&\frac{1}{2^{m+1}}\left|K_{\infty,m}(f)\cap e_{1}^{\perp}\right|_{m+1}.
\end{eqnarray*}
For $i=3$ (and the same follows for $i>3$), 
\[K_{\infty,m}(f)\cap e_{3}^{\perp}\cap \R^{m+1}_{+}=\{(x,t)\in\R^{2}_{+}\times(\R^{m}_{+}\cap e_{1}^{\perp});\|t\|_{\infty}\leq f(x)\}=K_{\infty,m-1}(f)\cap\R^{m+1}_{+}.
\]
Thus,
\[
\left|K_{\infty,m}(f)\cap e_{3}^{\perp}\cap\R^{m+1}_{+}\right|_{m+1}=\int_{\R^{2}_{+}}f(x)^{m-1}dx\times|B^{m-1}_{\infty}\cap\R^{m-1}_{+}|_{m-1}=\int_{\R^{2}_{+}}f(x)^{m-1}dx,
\]
and the result follows from the $m$-equipartition condition. 
\end{itemize}
\end{proof}

\begin{remark}
Notice that if one replaces in the above argument the cube $B_\infty^m$ by any unconditional convex body $L$ in $\R^m$, the same proof shows that $K_{L,m}(f)$ satisfies Mahler's conjecture in dimension $m+2$.
    
\end{remark}

\forget
\begin{conjecture}
Let $K$ be a symmetric convex body in $\R^{n}$. Then,
\[
\int_{K}m|t|^{m-1}dt\int_{K^{\circ}}m|t|^{m-1}dt\geq\int_{B_{\infty}^{n}}m|t|^{m-1}dt\int_{B_{1}^{n}}m|t|^{m-1}dt.
\]
\end{conjecture}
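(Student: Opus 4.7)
The plan is to apply Proposition \ref{prop:Mahler} to the convex body $K_{\infty,m}(f) := K_{B_\infty^m, m}(f) \subset \R^{m+2}$ from section 2, using the coordinate hyperplanes $H_i = e_i^\perp$ for $i = 1, \ldots, m+2$. Since $P(K_{\infty,m}(f)) = P(B_\infty^m) P_m(f) = \frac{4^m}{m!} P_m(f)$, obtaining $P(K_{\infty,m}(f)) \geq \frac{4^{m+2}}{(m+2)!}$ is equivalent to the claimed bound $P_m(f) \geq \frac{16}{(m+1)(m+2)}$. By linear invariance of $P_m$ and Lemma \ref{lem:m-equip}, I may assume $f$ is $m$-equipartitioned with respect to the canonical basis of $\R^2$, and I proceed by induction on $m \in \N$, the case $m = 0$ being immediate.

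The verification splits into three points, matching the three hypotheses of Proposition \ref{prop:Mahler}. First, the equipartition of $K_{\infty,m}(f)$ by the hyperplanes $e_i^\perp$: a Fubini computation using the unconditional structure of $B_\infty^m$ reduces this to the identity $\int_{\R^2_+} f^m = \frac{1}{4} \int_{\R^2} f^m$, which is the first half of the $m$-equipartition of $f$.

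Second, the fact that each section $K_{\infty,m}(f) \cap e_i^\perp$ satisfies Mahler's conjecture in $\R^{m+1}$. For $i \in \{1,2\}$, using that the polar of a section is the projection of the polar, one computes that this section equals $K_{\infty,m}(f_{|e_i^\perp})$ with $f_{|e_i^\perp}$ a one-dimensional even concave function; the one-dimensional Theorem \ref{dim1} gives $P_m(f_{|e_i^\perp}) \geq \frac{4}{m+1}$, hence $P(K_{\infty,m}(f) \cap e_i^\perp) \geq \frac{4^{m+1}}{(m+1)!}$. For $i \geq 3$, by symmetry in the last $m$ coordinates the section equals $K_{\infty,m-1}(f) \subset \R^{m+1}$, and the induction hypothesis $P_{m-1}(f) \geq \frac{16}{m(m+1)}$ supplies the same bound.

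Third, the equipartition of each section by the remaining hyperplanes. For $i \in \{1,2\}$, this reduces to the one-dimensional evenness identity $\int_0^\infty f(0, x_2)^m \, dx_2 = \frac{1}{2} \int_{\R} f(0, x_2)^m \, dx_2$. For $i \geq 3$, the section $K_{\infty,m-1}(f)$ must be equipartitioned by the $e_j^\perp$ with $j \neq i$, and the nontrivial part of this condition reduces to the identity $\int_{\R^2_+} f^{m-1} = \frac{1}{4} \int_{\R^2} f^{m-1}$, which is precisely the second half of the $m$-equipartition of $f$. This matching between the two exponents $m, m-1$ in the definition of $m$-equipartition and the two sizes of volumes being equipartitioned (namely $K_{\infty,m}(f)$ itself and its sections $K_{\infty,m}(f) \cap e_i^\perp = K_{\infty,m-1}(f)$ for $i \geq 3$) is the conceptual heart of the argument, and I expect it to be the main thing to set up carefully; once it is in place, the rest is a routine unwinding of the construction of $K_{\infty,m}(f)$ together with Theorem \ref{dim1} and the inductive hypothesis.
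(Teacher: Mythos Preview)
Your proposal does not address the stated conjecture at all: you have written out the proof of Theorem~\ref{dim2} (the bound $P_m(f)\ge\frac{16}{(m+1)(m+2)}$ for even concave $f:\R^2\to\R_+$), not a proof of the weighted volume-product inequality for an arbitrary symmetric convex body $K\subset\R^n$. The conjecture concerns a single convex body $K$ and the quantities $\int_K m|t|^{m-1}\,dt$ and $\int_{K^\circ} m|t|^{m-1}\,dt$, with $t$ a fixed coordinate; no concave function $f$, no body $K_{\infty,m}(f)$, and no $m$-equipartition of a function appear in its hypotheses. The two statements are related only in the very special situation where $K$ happens to be unconditional and of the form $K_{L,m}(f)$ for some even $f$, which is far from a generic symmetric $K$.

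Concretely, your argument never engages with the key difficulty of the conjecture: a centrally symmetric $K$ need not be symmetric with respect to the hyperplane $\{t=0\}$, so it cannot in general be written as $\{(x,t):|t|\le f(x)\}$ with $f$ even, and the reduction to $\int f^m$ fails. In the paper this conjecture sits inside a \texttt{\textbackslash forget\ldots\textbackslash forgotten} block --- it is abandoned material and carries no proof; the only hint for the planar case $n=2$ in that block is a shadow-system argument \`a la Saint~Raymond (keep the $t$-coordinate fixed and slide in the $x$-direction, so that $\mu(K_s)$ is constant and $\mu(K_s^\circ)^{-1}$ is convex and even in the parameter), which is an entirely different mechanism from the equipartition scheme you describe.
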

Since $K$ is a symmetric convex body, we know that there exists $a$, $b>0$ and two even concave functions $f_{1}$, $f_{2}:[-a,b]\to\R$ such that 
\[
K=\{(x,t)\in\R\times\R;\quad-a\leq x\leq b,\quad-f_{2}(x)\leq t\leq f_{1}(x)\}.
\]
Since $K$ is symmetric then we can consider that $-a\leq x\leq a$ and $-f_{2}(x)=-f_{1}(-x)$. Hence
\[
K=\{(x,t)\in[-a,a]\times\R;\quad -f(-x)\leq t\leq f(x)\}.
\]

\[
I=\int_{K}m|t|^{m-1}dt=\int_{-a}^{a}\int_{-f(-x)}^{f(x)}m|t|^{m-1}dtdx=2\int_{-a}^{a}\int_{f(a)\frac{x}{a}}^{f(x)}m|t|^{m-1}dtdx
\]
Let $b\in[-a,a]$ such that $f(b)=0$. Thus,
\begin{eqnarray*}
 I&=&2\int_{-a}^{0}\left[t^{m}\right]^{f(x)}_{f(a)\frac{x}{a}}dx+2\int_{0}^{b}\left[t^{m}\right]^{f(x)}_{0}dx+2\int_{0}^{b}\left[-(-t)^{m}\right]^{0}_{f(a)\frac{x}{a}}dx+2\int_{b}^{a}\left[-(-t)^{m}\right]^{f(x)}_{f(a)\frac{x}{a}}dx\\
 &=&2\int_{-a}^{0}\left(f(x)^{m}-\left(\frac{f(a)}{a}x\right)^{m}\right)dx+2\int_{0}^{b}f(x)^{m}dx+2\int_{0}^{b}\left(-f(a)\frac{x}{a}\right)^{m}dx\\
 &+&2\int_{b}^{a}\left[-(-f(x))^{m}+\left(-f(a)\frac{x}{a}\right)^{m}\right]dx\\
 &=&2\int_{-a}^{a}\mbox{sign}(f(x))|f(x)|^{m}dx-\left(\frac{f(a)}{a}\right)^{m}\left[\frac{x^{m+1}}{m+1}\right]^{0}_{-a}+2\left(\frac{-f(a)}{a}\right)^{m}\left[\frac{x^{m+1}}{m+1}\right]^{a}_{0}\\
 &=&2\int_{-a}^{a}\mbox{sign}(f(x))|f(x)|^{m}-\frac{|f(a)|^{m}a}{m+1}+\frac{|f(a)|^{m}a}{m+1}\\
 &=&2\int_{-a}^{a}\mbox{sign}(f(x))|f(x)|^{m}dx.
\end{eqnarray*}

\begin{theorem}
Let $K$ be a symmetric convex body in $\R^{2}$. Then,
\[
\int_{K}m|t|^{m-1}dt\int_{K^{\circ}}m|t|^{m-1}dt\geq\frac{16}{m+1}.
\]
\end{theorem}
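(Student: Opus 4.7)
The plan is to deduce Theorem \ref{dim2} from Mahler's conjecture in dimension $m+2$ applied to the associated convex body $K_{\infty,m}(f) := K_{B_\infty^m,m}(f) \subset \R^{2+m}$ introduced in Section 2. Recall that $P(K_{\infty,m}(f)) = P(B_\infty^m) P_m(f) = \frac{4^m}{m!} P_m(f)$, so the target bound $P_m(f) \geq \frac{16}{(m+1)(m+2)}$ is exactly equivalent to $P(K_{\infty,m}(f)) \geq \frac{4^{m+2}}{(m+2)!}$. I will verify this via Proposition \ref{prop:Mahler} applied with the $m+2$ coordinate hyperplanes $H_i = e_i^\perp$ of $\R^{2+m}$, and argue by induction on $m$ (the base case $m=0$ is trivial since $P_0(f) = 4$).

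First, using the linear invariance of $P_m$ together with Lemma \ref{lem:m-equip}, I may assume that $f$ is $m$-equipartitioned, so that both $\int_{\R^2_\varepsilon} f^m = \tfrac14 \int_{\R^2} f^m$ and $\int_{\R^2_\varepsilon} f^{m-1} = \tfrac14 \int_{\R^2} f^{m-1}$ for each sign vector $\varepsilon\in\{-1,1\}^2$. Then I verify the three hypotheses of Proposition \ref{prop:Mahler}:

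\textbf{Equal partition of $K_{\infty,m}(f)$.} Since $f$ is even and $B_\infty^m$ is unconditional, each orthant piece has $(m+2)$-volume $\int_{\R^2_\varepsilon} f^m \cdot |B_\infty^m \cap \R^m_{\varepsilon'}| = \int_{\R^2_\varepsilon} f^m$, which equals $\frac{1}{2^{m+2}} |K_{\infty,m}(f)|$ by $m$-equipartition.

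\textbf{Mahler's conjecture on sections.} For $i=1,2$, the section $K_{\infty,m}(f)\cap e_i^\perp$ is precisely $K_{\infty,m}(f|_{e_i^\perp})$, whose volume product equals $\frac{4^m}{m!}$ times the one-dimensional quantity $P_m(f|_{e_i^\perp})$; by Theorem \ref{dim1} this is at least $\frac{4^{m+1}}{(m+1)!}$, which is the required bound. For $i \geq 3$, the section equals $K_{\infty,m-1}(f)$ itself, and its Mahler product equals $\frac{4^{m-1}}{(m-1)!} P_{m-1}(f)$; by the induction hypothesis at level $m-1$, this is at least $\frac{4^{m-1}}{(m-1)!} \cdot \frac{16}{m(m+1)} = \frac{4^{m+1}}{(m+1)!}$.

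\textbf{Equipartition within sections.} For $i=1,2$, the four orthant pieces of the section have equal $(m+1)$-volume $\int_0^\infty f(0,x_2)^m\,dx_2$ (and its analogues), by evenness of $f$ and unconditionality of $B_\infty^m$. For $i \geq 3$, the $2^{m+1}$ pieces of $K_{\infty,m-1}(f)$ have common volume $\int_{\R^2_\varepsilon} f^{m-1} \cdot |B_\infty^{m-1} \cap \R^{m-1}_{\varepsilon'}|$, and the $m$-equipartition condition (the $f^{m-1}$ equality, which is precisely what is needed here) forces these to be equal. This is exactly the reason the definition of $m$-equipartition demanded both the $f^m$ and $f^{m-1}$ balancing simultaneously.

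The main obstacle is bookkeeping the different roles of the two blocks of coordinates (the $\R^2$ block carrying $f$ and the $\R^m$ block carrying the norm $\|\cdot\|_\infty$): sections along the first two hyperplanes reduce dimension in the $f$-variable and must be handled by the $n=1$ theorem, while sections along the remaining $m$ hyperplanes reduce the exponent $m$ by one and must be handled by induction. Once this dichotomy is organized, both branches feed into Proposition \ref{prop:Mahler} and produce the bound $\frac{4^{m+2}}{(m+2)!}$, and the equality case $f = \1_{[-1,1]^2}$ is immediate by direct computation.
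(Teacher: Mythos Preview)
You have proved the wrong theorem. The statement you were asked to establish concerns a centrally symmetric convex body $K\subset\R^{2}$ and the weighted volume product
\[
\int_{K}m|t|^{m-1}\,dx\,dt \;\int_{K^{\circ}}m|t|^{m-1}\,dy\,ds \;\ge\; \frac{16}{m+1},
\]
where $t$ denotes the second coordinate in $\R^{2}$. Your write-up instead proves Theorem~\ref{dim2}, namely $P_m(f)\ge \frac{16}{(m+1)(m+2)}$ for an even concave $f:\R^{2}\to\R_+$. These are different objects, different dimensions of the underlying function, and different target constants.

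To see how the stated inequality sits relative to the paper: when $K$ happens to be unconditional, one can write $K=\{(x,t):|t|\le f(x)\}$ for an even concave $f:\R\to\R_+$, and then $\int_K m|t|^{m-1}=2\int_\R f^m$; the inequality becomes $4P_m(f)\ge 16/(m+1)$, i.e.\ exactly the one-dimensional Theorem~\ref{dim1}. But a general centrally symmetric $K\subset\R^{2}$ need not be symmetric about the $x$-axis, so the statement is strictly stronger than Theorem~\ref{dim1} and is not implied by Theorem~\ref{dim2} either. The paper's own argument (sketched in the source) proceeds via Saint-Raymond's shadow-system technique: one slides $K$ horizontally at each fixed height $t$; the measure $\mu(K_\lambda)=\int_{K_\lambda}m|t|^{m-1}$ is constant in the shadow parameter $\lambda$, while $\lambda\mapsto \mu(K_\lambda^{\circ})^{-1}$ is even and convex, which reduces the problem to the unconditional case. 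None of this appears in your proposal.
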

\begin{proof}
Let $\mu(K)=\int_{K}m|t|^{m-1}dt$.\\
This is true due to Saint-Raymond.
Il faut introduire les shadow systems\\
t reste fixe et je deplace le x tout au long de la droite horizontale alors la mesure reste la meme ($\mu(K_{t})$ est cte) et $t\to\mu(K_{t}^{\circ})^{-1}$ est paire et convexe.
\end{proof}

\begin{theorem}
Let $K$ be a symmetric convex body in $\R^{3}$ and symmetric with respect to $e_{3}^{\perp}$. Set $\mu(K)=\int_{K}m|t|^{m-1}dtdx$, then
\[
\mu(K)\mu(K^{\circ})\geq\mu(B_{\infty}^{3})\mu(B_{1}^{3}).
\]
\end{theorem}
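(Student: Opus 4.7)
The strategy is to lift the functional inequality to a Mahler-type inequality in higher dimension using the convex body $K := K_{B_\infty^m, m}(f) \subset \R^{2} \times \R^m$ defined in Section~2, and to prove the lifted inequality by invoking Proposition~\ref{prop:Mahler} inductively on $m$. Since Section~2 gives $P(K) = P(B_\infty^m)\,P_m(f) = \frac{4^m}{m!}\,P_m(f)$, the target bound $P_m(f) \geq \frac{16}{(m+1)(m+2)}$ is equivalent to Mahler's inequality $P(K) \geq \frac{4^{m+2}}{(m+2)!}$ in dimension $m+2$. Because $P_m$ is invariant under invertible linear maps, I would first apply Lemma~\ref{lem:m-equip} and replace $f$ by $f \circ T$ so that $f$ becomes $m$-equipartitioned by the canonical basis of $\R^2$.

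The induction step is then to verify the three hypotheses of Proposition~\ref{prop:Mahler} for $K$ using the $m+2$ coordinate hyperplanes $H_i = e_i^\perp$. The equipartition of $K$ into $2^{m+2}$ orthants of equal volume reduces to the $m$-equipartition of $f$ together with the orthant symmetry of $B_\infty^m$. For the sections: if $i \in \{1,2\}$, then $K \cap e_i^\perp = K_{B_\infty^m, m}(f|_{e_i^\perp})$ is an $(m+1)$-dimensional body whose volume product equals $\frac{4^m}{m!}\,P_m(f|_{e_i^\perp})$, so the one-dimensional Theorem~\ref{dim1} yields the required Mahler bound $\frac{4^{m+1}}{(m+1)!}$; if $i \geq 3$, then $K \cap e_i^\perp = K_{B_\infty^{m-1}, m-1}(f)$ and the inductive hypothesis on $P_{m-1}(f)$ yields exactly the same bound. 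The sub-equipartition of each section by the remaining $m+1$ hyperplanes is checked in the same manner.

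The main delicate point is keeping all the equipartition bookkeeping consistent between the ambient body and its coordinate sections across the induction. This is precisely why Lemma~\ref{lem:m-equip} is formulated with \emph{two} simultaneous equipartition conditions, on $\int f^m$ and on $\int f^{m-1}$: the $f^m$ condition controls the global equipartition of $K$ as well as the equipartitions of the sections through $e_1^\perp$ and $e_2^\perp$ at level $m+1$, while the $f^{m-1}$ condition is exactly what the inductive hypothesis needs when applied to $K_{B_\infty^{m-1}, m-1}(f)$, which itself requires an $(m-1)$-equipartition. The base cases lie outside the inductive machinery: $m=0$ reads $P_0(f) = |\supp f|\cdot|\supp \L f| = |C|\,|C^\circ|$ by Lemma~\ref{supp}, hence $\geq 8$ by $2$-dimensional Mahler, matching $16/((0+1)(0+2))$; and $m=1$ is $3$-dimensional Mahler (Iriyeh--Shibata) applied to $K_{[-1,1],1}(f) = \{(x,t)\in\R^2\times\R : |t|\leq f(x)\}$, giving $P(K) \geq 64/6$ and hence $P_1(f) \geq 8/3 = 16/((1+1)(1+2))$.
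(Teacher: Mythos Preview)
Your write-up proves Theorem~\ref{dim2}, namely $P_m(f)\ge\frac{16}{(m+1)(m+2)}$ for even concave $f:\R^2\to\R_+$, and does so by exactly the argument the paper uses there: lift to $K_{\infty,m}(f)\subset\R^{m+2}$, $m$-equipartition via Lemma~\ref{lem:m-equip}, then apply Proposition~\ref{prop:Mahler} by induction on $m$, handling sections $e_1^\perp,e_2^\perp$ with Theorem~\ref{dim1} and sections $e_i^\perp$, $i\ge3$, via the induction hypothesis. Your explicit mention of the base cases $m=0$ (planar Mahler on $\supp f$) and $m=1$ (Iriyeh--Shibata on $K_{[-1,1],1}(f)$) is a welcome addition that the paper's proof leaves implicit.

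However, the \emph{stated} theorem is not Theorem~\ref{dim2}: it concerns an arbitrary centrally symmetric body $K\subset\R^3$ that is also symmetric in $e_3^\perp$, and a weighted product $\mu(K)\mu(K^\circ)$ with $\mu(K)=\int_K m|t|^{m-1}\,dt\,dx$. You never make the bridge. The missing step is: any such $K$ can be written as $K=\{(x,t)\in\R^2\times\R:|t|\le f(x)\}$ for an even concave $f$ (take $f(x)=\sup\{t:(x,t)\in K\}$ on $P_{e_3^\perp}(K)$); then $\mu(K)=2\int_{\R^2}f^m$ and, since $K^\circ=\{|s|\le\L f(y)\}$, also $\mu(K^\circ)=2\int_{\R^2}(\L f)^m$, so that $\mu(K)\mu(K^\circ)=4P_m(f)$ and $\mu(B_\infty^3)\mu(B_1^3)=4P_m(\1_{[-1,1]^2})=\frac{64}{(m+1)(m+2)}$. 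With this translation in place your argument is complete and matches the paper. Note also that this particular statement sits inside a \texttt{\textbackslash forget\ldots\textbackslash forgotten} block in the source, so the paper itself gives no proof of it; the comparison is necessarily against the proof of the equivalent Theorem~\ref{dim2}.
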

\newpage
\forgotten

\section{Proof of the inequalities in dimension 1 and 2 for $-\frac{1}{n}<s<0$.}
Let $-\frac{1}{n}<s=-\frac{1}{m}<0$ and $g:\R^{n}\to\R_{+}$ be an even $s$-concave function. Then, the function $f=g^s:\R^{n}\to(0,+\infty]$ is even and convex. In this section, we give a sharp lower bound of the operator $Q_{m}$ applied to even convex functions $f$ in dimensions $1$ and $2$.
First, we start by proving the following theorem, which is the analogue version of Theorem~\ref{th:mahler-s-F} for convex functions rather than $s$-concave ones, as seen in equation \eqref{eq:conjQmf-F}.
\begin{theorem}\label{th:m-neg-F-d2}
Let $f:\R^{2}\to (0,+\infty)$ be an even convex function such that $f\in\F$. Then, for any integer $m>2$ , one has
\[
\int_{\R^{2}}\frac{1}{(f(x))^{m}}dx\int_{\R^{2}}\frac{1}{(\M f(y))^{m}}dy\geq\frac{16m}{(m-1)(m-2)^2}.
\]
Moreover, there is equality for $f(x)=\max(1,\|x\|_\infty)$.
%with equality if and only if there exists a Hanner polytope $K\subset\R^{n+1}$ such that for every $x\in\R^{n}$, $f(x)=\|(x,1)\|_{K}=\inf\{\lambda>0; (x,1)\in\lambda K\}$.
\end{theorem}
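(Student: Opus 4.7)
The plan is to follow the strategy of the proof of Theorem \ref{dim2}, associating to $f$ a symmetric convex body in $\R^m$ and verifying the hypotheses of Proposition \ref{prop:Mahler}. Set $K:=B_\infty^{m-2}$ and consider the convex body $C:=C_{m-2}^{K}(f)\subset\R^m=\R^2\times\R^{m-2}$ introduced in Section~2. Equation~\eqref{eq:Mf}, applied with $n=2$ and with the parameter $m$ there replaced by $m-2$, gives
\[
Q_m(f)=\left(\frac{m}{m-2}\right)^{2}\frac{(m-2)!}{4^{m-2}}\,|C|\,|C^\circ|,
\]
so, after a routine simplification, the claimed inequality $Q_m(f)\ge 16m/((m-1)(m-2)^2)$ reduces to the Mahler bound $|C|\,|C^\circ|\ge 4^m/m!$ in dimension~$m$.

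I would prove this Mahler bound by induction on the integer $m\ge 3$, verifying that $C$ satisfies the hypotheses of Proposition~\ref{prop:Mahler}. Since $Q_m$ and $C$ are invariant under invertible linear maps of $\R^2$, I would first invoke a variant of Lemma~\ref{lem:m-equip} on the even positive continuous function $1/f$ to arrange that the coordinate quadrants $\R^2_\varepsilon$ equipartition both $\int 1/f^m$ and, in the case $m\ge 4$, $\int 1/f^{m-1}$. The proof of that lemma uses only evenness, positivity, integrability, and continuity, so it adapts directly. In the base case $m=3$ the integral $\int 1/f^2$ may diverge (this happens already for the extremizer $f=\max(1,\|x\|_\infty)$), and I would instead equipartition the $2$-dimensional centrally symmetric convex set $\{\ell\le 1\}$, where $\ell(x):=\lim_{t\to\infty}f(tx)/t$ is the recession function of $f$; the intermediate-value argument of Lemma~\ref{lem:m-equip} adapts with $\1_{\{\ell\le 1\}}$ in place of $f^{m-1}$.

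A Fubini/radial change of variables then gives, for $m\ge 4$ and $\varepsilon\in\{-1,1\}^m$, $\eta\in\{-1,1\}^{m-1}$,
\[
|C\cap\R^m_\varepsilon|=\frac{m-2}{m}\int_{\R^2_{(\varepsilon_1,\varepsilon_2)}}\frac{dz}{f(z)^m},\qquad |C\cap e_i^\perp\cap\R^{m-1}_\eta|_{m-1}=\frac{m-3}{m-1}\int_{\R^2_{(\eta_1,\eta_2)}}\frac{dz}{f(z)^{m-1}}\ \text{for}\ i\ge 3,
\]
together with straightforward $1$-dimensional analogues for the sections at $i\in\{1,2\}$. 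Combined with the equipartitions above, these identities yield the required splittings of $C$ into $2^m$ equal pieces and of each section $C\cap e_i^\perp$ into $2^{m-1}$ equal pieces. The Mahler bound on sections splits by cases. For $i\in\{1,2\}$, $C\cap e_i^\perp=C_{m-2}^{K}(f|_{e_i^\perp})$, and equation~\eqref{eq:Mf} with $n=1$, combined with the one-dimensional case of Conjecture~\ref{mahler-s-F} from \cite{FGSZ}, yields $P(C\cap e_i^\perp)\ge 4^{m-1}/(m-1)!$. For $i\ge 3$ and $m\ge 4$, $C\cap e_i^\perp=C_{m-3}^{B_\infty^{m-3}}(f)$, and the induction hypothesis applied via~\eqref{eq:Mf} gives the same bound; in the base case $m=3$, the section is the $2$-dimensional body $\{\ell\le 1\}$, controlled by Mahler's conjecture in the plane. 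Proposition~\ref{prop:Mahler} now closes the induction.

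The main obstacle is the base case $m=3$: the section $C\cap e_3^\perp$ is not another $C_{m'}^K$-type body but the recession body $\{\ell\le 1\}$, and the integral $\int 1/f^{m-1}$ that is equipartitioned in the inductive step may well diverge. Replacing that equipartition by an equipartition of $\{\ell\le 1\}$, via a minor adaptation of the intermediate-value argument in Lemma~\ref{lem:m-equip}, is the key technical modification; once it is in place, the base case fits into the same framework and the rest of the proof is essentially bookkeeping parallel to that of Theorem~\ref{dim2}.
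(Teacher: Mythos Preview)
Your overall strategy matches the paper's: reduce to the Mahler inequality for $C_{m-2}^{B_\infty^{m-2}}(f)$ in $\R^m$ via equation~\eqref{eq:Mf}, and verify the hypotheses of Proposition~\ref{prop:Mahler} by induction on $m$; for $m\ge 4$ your inductive step is essentially identical to the paper's (same equipartition, same identification of the sections, same appeal to \cite{FGSZ} in dimension~$1$ for $i\in\{1,2\}$ and to the induction hypothesis for $i\ge 3$).

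The only genuine difference is the base case $m=3$. The paper dispatches it in one line by invoking the three-dimensional Mahler inequality of Iriyeh--Shibata~\cite{IS}: $C_1^\infty(f)$ is a centrally symmetric convex body in $\R^3$, hence $P(C_1^\infty(f))\ge 4^3/3!$ directly, with no equipartition needed. You instead push Proposition~\ref{prop:Mahler} all the way down to $m=3$, identifying the section $C\cap e_3^\perp$ with the recession body $\{\ell\le 1\}$ and replacing the possibly divergent integral $\int 1/f^2$ in the equipartition lemma by the area of $\{\ell\le 1\}$. This adaptation is correct: the intermediate-value argument of Lemma~\ref{lem:m-equip} needs only two even finite measures on $\R^2$, and $f\in\F$ forces $\ell(x)>0$ for $x\ne 0$ (if $\ell(x_0)=0$ then $g(t)=f(tx_0)$ is convex with $g'(t)\le g(t)/t\to 0$, so $g'\le 0$ everywhere, contradicting $g(t)\to\infty$), so $\{\ell\le 1\}$ is indeed a symmetric convex body and Mahler's planar inequality applies. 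Your route thus bypasses \cite{IS} entirely, using only Mahler in the plane and the one-dimensional case from \cite{FGSZ}; the paper's route is considerably shorter but imports a much deeper external result.
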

\begin{proof}
Let us recall equation (\ref{eq:Mf}) and apply it in dimension $n$, which gives that, for all $m>n$ and for every symmetric convex body $K\in\R^{m-n}$, one has 
\[
\int_{\R^{n}}\frac{1}{f(x)^{m}}dx\int_{\R^{n}}\frac{1}{(\M f(y))^{m}}dy=\left(\frac{m}{m-n}\right)^{2}\frac{1}{|K||K^{\circ}|}|C_{m-n}^{K}(f)||C_{m-n}^{K}(f)^{\circ}|,
\]
where $C^{K}_{m-n}(f)=\left\{(x,s)\in\R^{n}\times\R^{m-n}; \|s\|_{K}f\left(\frac{x}{\|s\|_{K}}\right)\leq 1\right\}.$
Applying it for $K=B_{\infty}^{m-n}$, we simplify the notation of the associated convex body to 
\[
C_{m-n}^{\infty}(f)=\left\{(x,s)\in\R^{n}\times\R^{m-n};\|s\|_{\infty}f\left(\frac{x}{\|s\|_{\infty}}\right)\leq 1\right\}.
\]
And its dual is denoted by $C_{m-n}^1(\M f)$.
In addition, if $C_{m-n}^{\infty}(f)$ verifies Mahler's inequality in dimension $m$, we get
\begin{eqnarray*}
   \int_{\R^{n}}\frac{1}{f(x)^{m}}dx\int_{\R^{n}}\frac{1}{(\M f(y))^{m}}dy&=& \left(\frac{m}{m-n}\right)^{2}\frac{1}{|B_{\infty}^{m-n}||B_{1}^{m-n}|}|C_{m-n}^{\infty}(f)||C_{m-n}^{\infty}(f)^{\circ}|\\
   &\geq&\left(\frac{m}{m-n}\right)^{2}\frac{(m-n)!}{4^{m-n}}\frac{4^{m}}{m!}\\
  % &=&\frac{m}{m-n}\times\frac{(m-n-1)!}{(m-1)!}4^{n}\\
   &=&\frac{m}{m-n}\times\frac{4^{n}}{(m-n)\cdots(m-1)}.
\end{eqnarray*}
Thus, we get the desired inequality \eqref{eq:conjQmf-F} in dimension $n$. Since the $m$-equipartition condition, which is the key of our proof, is only verified in dimension $2$, it remains to prove that $C_{m-2}^{\infty}(f)$ verifies the hypothesis of Proposition \ref{prop:Mahler} in dimension $m$ for any integer $m\ge3$. 

We prove it by induction on $m$. For $m=3$, $C_{m-2}^{\infty}(f)=C_1^{\infty}(f)$ is a symmetric convex body in $\R^3$ hence it satisfies Mahler's conjecture by \cite{IS}. 

Let $m\ge4$ and assume that Mahler's conjecture is satisfied for $C_{m-3}^{\infty}(f)$ and let us prove it for $C_{m-2}^{\infty}(f)$.
For that, recall that $\R^{2}_{\varepsilon}=\{x\in\R^{2};\varepsilon_{i}x_{i}\geq 0,\forall i\in\{1,2\}\}$, $\forall\varepsilon\in\{-1,1\}^{2}$.
We can assume that the even convex function $f:\R^{2}\to\R_{+}$ is $m$-equipartitioned. In the sense that, $\forall\varepsilon\in\{-1,1\}^{2}$, one has
\[
\int_{\R_{\varepsilon}^{2}}\frac{dx}{f(x)^{m}}=\frac{1}{4}\int_{\R^{2}}\frac{dx}{f(x)^{m}}\quad\mbox{and}\quad\int_{\R_{\varepsilon}^{2}}\frac{dx}{f(x)^{m-1}}=\frac{1}{4}\int_{\R^{2}}\frac{dx}{f(x)^{m-1}}.
\]
We use proposition \ref{prop:Mahler} by proving that the convex body $C_{m-2}^{\infty}(f)$ verifies its conditions to deduce the lower bound of Mahler's volume of $C_{m-2}^{\infty}(f)$.
\begin{itemize}
\item \underline{$C_{m-2}^{\infty}(f)$ can be partitioned into $2^{m}$ pieces of the same volume.}
\[
C_{m-2}^{\infty}(f)\cap\R_{+}^{m}=\left\{(x,s)\in\R_{+}^{2}\times\R_{+}^{m-2};\|s\|_{\infty}f\left(\frac{x}{\|s\|_{\infty}}\right)\leq 1\right\}.
\]
Then,
\begin{eqnarray*}
\left|C_{m-2}^{\infty}(f)\cap\R_{+}^{m}\right|=\frac{m-2}{m}\left|B_{\infty}^{m-2}\cap\R_{+}^{2}\right|\int_{\R^{2}_{+}}\frac{dx}{f(x)^{m}}=\frac{m-2}{4m}\int_{\R^{2}}\frac{dx}{f(x)^{m}}.
\end{eqnarray*}
Thus,
\[
|C_{m-2}^{\infty}(f)|=\frac{m-2}{m}|B_{\infty}^{m-2}|\int_{\R^{2}}\frac{dx}{f(x)^{m}}=\frac{m-2}{m}2^{m-2}\int_{\R^{2}}\frac{dx}{f(x)^{m}}=2^{m}|C_{m-2}^{\infty}(f)\cap\R_{+}^{m}|.
\]
Similarly, since $f$ is even and $m$-equipartitioned, we can easily prove that for all $\varepsilon\in\{-1,1\}^{m}$ 
\[
|C_{m-2}^{\infty}(f)\cap\R_{\varepsilon}^{m}|=\frac{1}{2^{m}}|C_{m-2}^{\infty}(f)|.
\]

\item \underline{$C_{m-2}^{\infty}(f)\cap e_{i}^{\perp}$ verifies Mahler in dimension $m-1$, for $1\le i\le m$.}\\

For $i=1$ (the same holds for $i=2$),
\[C_{m-2}^{\infty}(f)\cap e_{1}^{\perp}=\left\{(0,x_{2},s)\in\R^{2}\times\R^{m-2};\|s\|_{\infty}f\left(\frac{(0,x_{2})}{\|s\|_{\infty}}\right)\leq 1\right\}=\{0\}\times C_{m-2}^{\infty}\left(f_{|e_{1}^{\perp}}\right).
\]
One has,
\[
\left(C_{m-2}^{\infty}(f)\cap e_{1}^{\perp}\right)^{\circ}=\left(\{0\}\times C_{m-2}^{\infty}\left(f_{|e_{1}^{\perp}}\right)\right)^{\circ}=\{0\}\times \left(C_{m-2}^{\infty}\left(f_{|e_{1}^{\perp}}\right)\right)^{\circ}=\{0\}\times C_{m-2}^{1}\left(\M \left(f_{|e_{1}^{\perp}}\right)\right).
\]
Thus, we get the following volumes
\[
\left|C_{m-2}^{\infty}(f)\cap e_{1}^{\perp}\right|=\left|C_{m-2}^{\infty}\left(f_{e_{1}^{\perp}}\right)\right|=\frac{m-2}{m-1}2^{m-2}\int_{\R}\frac{dx_{2}}{f_{|e_{1}^{\perp}}(x_{2})^{m-1}}.
\]
And
\[
\left|\left(C_{m-2}^{\infty}(f)\cap e_{1}^{\perp}\right)^{\circ}\right|=\left|C_{m-2}^{1}\left(\M\left(f_{|e_{1}^{\perp}}\right)\right)\right|=\frac{(m-2)2^{m-2}}{(m-1)(m-2)!}\int_{\R}\frac{dx_{2}}{\left(\M f_{|e_{1}^{\perp}}(x_{2})\right)^{m-1}}.
\]
Applying the result in dimension $1$, proved in \cite{FGSZ}, to $f_{|e_{1}^{\perp}}$ we get
\begin{eqnarray*}
    \left|C_{m-2}^{\infty}(f)\cap e_{1}^{\perp}\right|\left|\left(C_{m-2}^{\infty}(f)\cap e_{1}^{\perp}\right)^{\circ}\right|
    &=&\frac{(m-2)^{2}4^{m-2}}{(m-1)^{2}(m-2)!}\int_{\R}\frac{dx_{2}}{ f_{|e_{1}^{\perp}}(x_{2})^{m-1}}\int_{\R}\frac{dx_{2}}{\left(\M f_{|e_{1}^{\perp}}(x_{2})\right)^{m-1}}\\
    &\geq&\frac{(m-2)^{2}4^{m-2}}{(m-1)^{2}(m-2)!}\frac{m-1}{m-2}\frac{4}{m-2)}    =\frac{4^{m-1}}{(m-1)!}.
\end{eqnarray*}
For $i=3$ and the case $i>3$ follows similarly: 
\begin{eqnarray*}
C_{m-2}^{\infty}(f)\cap e_{3}^{\perp}=\left\{(x,0,s)\in\R^{2}\times\{0\}\times\R^{m-3};\|(0,s)\|_{\infty}f\left(\frac{x}{\|(0,s)\|_{\infty}}\right)\leq 1\right\}=C_{m-3}^{\infty}(f).
\end{eqnarray*}
Thus,
\[
\left|C_{m-2}^{\infty}(f)\cap e_{3}^{\perp}\right|=\left|C_{m-3}^{\infty}(f)\right|=\frac{m-3}{m-1}2^{m-3}\int_{\R^{2}}\frac{dx}{f(x)^{m-1}}.
\]
And
\[
\left|\left(C_{m-2}^{\infty}(f)\cap e_{3}^{\perp}\right)^{\circ}\right|=\left|\left(C_{m-3}^{\infty}(f)\right)^{\circ}\right|=\left|C_{m-3}^{1}(\M f)\right|=\frac{m-3}{m-1}\frac{2^{m-3}}{(m-3)!}\int_{\R^{2}}\frac{dy}{\left(\M f(y)\right)^{m-1}}.
\]
Then, using the hypothesis of induction on $m$, we obtain,
\begin{eqnarray*}
\left|C_{m-2}^{\infty}(f)\cap e_{3}^{\perp}\right|\left|\left(C_{m-2}^{\infty}(f)\cap e_{3}^{\perp}\right)^{\circ}\right|&=&\frac{(m-3)^{2}}{(m-1)^{2}}\frac{4^{m-3}}{(m-3)!}\int_{\R^{2}}\frac{dx}{f(x)^{m-1}}\int_{\R^{2}}\frac{dy}{\left(\M f(y)\right)^{m-1}}\\
&\ge&\frac{4^{m-1}}{(m-1)!}.
\end{eqnarray*}
which concludes the second part.
\item \underline{$C_{m-2}^{\infty}(f)\cap e_{i}^{\perp}$ can be partitioned into $2^{m-1}$ pieces of the same volume, for $1\le i\le m$.}\\

For $i=1$ and $i=2$, one has 
\[
C_{m-2}^{\infty}(f)\cap e_{1}^{\perp}=\{0\}\times C_{m-2}^{\infty}\left(f_{|e_{1}^{\perp}}\right)\quad\hbox{and}\quad 
C_{m-2}^{\infty}(f)\cap e_{2}^{\perp}= C_{m-2}^{\infty}\left(f_{|e_{2}^{\perp}}\right)\times\{0\}.
\]
These convex bodies are unconditional, thus we get the partition.

For $i=3$, by the $m$-equipartition condition of $f$, we obtain
\begin{eqnarray*}
\left|C_{m-2}^{\infty}(f)\cap e_{3}^{\perp}\cap\R_{+}^{m-1}\right|&=&\left|C_{m-3}^{\infty}(f)\cap\R_{+}^{m-1}\right|=\frac{m-3}{m-1}\left|B_{\infty}^{m-3}\cap\R_{+}^{m-1}\right|\int_{\R_{+}^{2}}\frac{dx}{f(x)^{m-1}}\\
&=&\frac{1}{4}\frac{m-3}{m-1}\int_{\R^{2}}\frac{dx}{f(x)^{m-1}}=\frac{1}{2^{m-1}}\left|C_{m-2}^{\infty}(f)\cap e_{3}^{\perp}\right|.
\end{eqnarray*}
 Using the same method, one may prove the partition of the body $C_{m-2}^{\infty}(f)\cap e_{i}^{\perp}$ for $i>3$.
\end{itemize}
Therefore $C_{m-2}^\infty(f)$ satisfies Mahler's conjecture, which concludes the proof of the inequality of Theorem~\ref{th:m-neg-F-d2}. Moreover, it is easy to check that there is equality for $f(x)=\max(1,\|x\|_\infty)$, since, in this case $C^\infty_{m-2}(f)=B_\infty^m$.
\end{proof}

Our next goal in this section is to prove the following theorem in dimension $1$, which establishes Mahler's conjecture in dimension 1 for $s$-concave functions for any $-1<s<0$.
\begin{theorem}\label{th:mahler-sneg}
Let $f:\R\to (0,+\infty]$ be an even convex function such that $\lim_{t\to+\infty}f(tx)=+\infty$, for any $x\neq 0$. Then, for any $m>1$, one has
\[
Q_m(f)=\int_{\R}\frac{dx}{f(x)^m}\int_{\R}\frac{dy}{\left(\M f(y)\right)^{m}}\geq Q_m\left(1+I_{[-1,1]}\right)=\frac{4}{m-1},
\]
with equality if and only if $f=c+I_{[-\alpha,\alpha]}$, for some $c,\alpha>0$.
\end{theorem}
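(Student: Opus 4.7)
The approach is to derive this theorem as a direct consequence of two ingredients available to us: Theorem \ref{comp-bipolar-one} (the one-dimensional bipolar comparison) and the one-dimensional case of Conjecture \ref{mahler-s-F}, which was established in \cite{FGSZ}. Combining them gives Mahler's conjecture in dimension $1$ for $s\in(-1,0)$, as already announced in the remark following the statement of Theorem \ref{comp-bipolar-one} in the introduction; translating back to the convex-function side then immediately yields the bound $Q_m(f)\ge 4/(m-1)$.

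First I would translate the statement into the language of $s$-concave functions by setting $s=-1/m\in(-1,0)$ and $g:=f^{-m}$. Then $g$ is an even $s$-concave function on $\R$ with $0<\int g<\infty$, and the computations of Section 2.2 give $\int g = \int dx/f^m$ and $\int \L_s g = m\int dy/(\M f)^m$, so that $m\,Q_m(f) = \int g\cdot\int\L_s g$. The claimed inequality $Q_m(f)\ge 4/(m-1)$ is therefore equivalent to the one-dimensional Mahler bound $\int g\cdot\int\L_s g\ge 4/(1+s)$.

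I would then apply Theorem \ref{comp-bipolar-one} to $g$ to obtain $\int g\ge(1+s)\int\L_s\L_s g$, and separately apply the one-dimensional case of Conjecture \ref{mahler-s-F} from \cite{FGSZ} to the function $\tilde g := \L_s g$, whose $s$-th power $\M f$ always belongs to $\F$ (as recalled in Section 2.2). Since $\L_s\tilde g = \L_s\L_s g$, this second inequality reads
\[
\int\L_s g\cdot\int\L_s\L_s g \;\ge\; \frac{1}{1+s}\cdot\frac{4}{1+s} \;=\; \frac{4}{(1+s)^2}.
\]
Multiplying the bipolar inequality by $\int\L_s g$ and substituting the above gives
\[
\int g\cdot\int\L_s g \;\ge\; (1+s)\int\L_s\L_s g\cdot\int\L_s g \;\ge\; (1+s)\cdot\frac{4}{(1+s)^2} \;=\; \frac{4}{1+s},
\]
which is exactly the desired bound.

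For the equality case, I would trace equality back through both inputs: equality in Theorem \ref{comp-bipolar-one} forces $g$ to be a positive multiple of $\1_{[-\alpha,\alpha]}$, equivalently $f=c+I_{[-\alpha,\alpha]}$ for some $c,\alpha>0$, and one checks directly that for such an $f$ the $s$-dual $\L_s g$ simultaneously saturates the \cite{FGSZ} inequality (its $s$-th power $\M f = (1+\alpha|y|)/c$ being a one-dimensional $\F$-extremal up to scaling). The main obstacle in this proof strategy is therefore entirely concentrated in Theorem \ref{comp-bipolar-one} itself: this is the genuinely new one-dimensional input, requiring a substantive direct analysis (of $f$ versus its bipolar $\M\M f$), while everything else reduces to a short chain of inequalities.
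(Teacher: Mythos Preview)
Your proposal is correct and follows essentially the same route as the paper. The paper works directly in the convex-function language: it states the bipolar comparison as Theorem~\ref{th:f-mmf-dim1} (the $Q_m$-version of Theorem~\ref{comp-bipolar-one}), multiplies by $\int(\M f)^{-m}$ to get $Q_m(f)\ge\frac{m-1}{m}Q_m(\M f)$, and then applies the \cite{FGSZ} bound $Q_m(\M f)\ge\frac{4m}{(m-1)^2}$ for $\M f\in\F$; you do the same chain after first translating to the $s$-concave side, which is an equivalent formulation. One small point: the equality characterization you invoke for Theorem~\ref{comp-bipolar-one} is only stated as ``with equality for $g=\1_{[-1,1]}$'' in the introduction, but the full ``if and only if'' is indeed proved in Theorem~\ref{th:f-mmf-dim1}, so your equality argument is justified.
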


We first show that Theorem \ref{th:mahler-sneg} follows from the next theorem, which implies Theorem \ref{comp-bipolar-one}, if one applies it to $f=g^s$, for any $g$, $s$-concave, for $-1<s<0$ and $m=-1/s$.
\begin{theorem}\label{th:f-mmf-dim1}
Let $f:\R\to (0,+\infty]$ be an even convex function such that $\lim_{t\to+\infty}f(tx)=+\infty$, for any $x\neq 0$. Then for any $m>1$, 
\begin{equation}\label{eq:f-mmf-dim1}
\int_{\R}\frac{1}{f(x)^m}dx\ge \frac{m-1}{m}\int_{\R}\frac{1}{(\M\M f(x))^m}dx,
\end{equation}
with equality if and only if $f=c+I_{[-\alpha,\alpha]}$, for some $c,\alpha>0$.
\end{theorem}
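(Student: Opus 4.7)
The plan is to give an explicit formula for $\M\M f$ in dimension~$1$ and then reduce the inequality to a one-line monotonicity bound.

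The key structural fact is that $\M\M f$ is the largest element of $\F$ dominated by $f$. From the definition of the supremum one has $\M f(y)\ge(1+xy)/f(x)$ for all $x,y$, hence $f(x)\ge(1+xy)/\M f(y)$; taking the supremum over $y$ gives $f\ge\M\M f$, and $\M\M f\in\F$ by Section~2. Conversely, if $F\in\F$ satisfies $F\le f$, then applying $\M$ twice (it reverses the pointwise order) combined with $\M\M F=F$ on $\F$ gives $F\le\M\M f$. In dimension~$1$, an even convex positive lsc $F$ with $\lim F=\infty$ belongs to $\F$ iff $x\mapsto F(x)/|x|$ is non-increasing on $(0,+\infty)$. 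For our $f$ (even convex, non-decreasing on $[0,+\infty)$, with $f(0)=c>0$ and $\lim f=\infty$), define
\[
a^*:=\inf_{y>0}\frac{f(y)}{y}>0,\qquad x^*:=\inf\Bigl\{x>0:\tfrac{f(x)}{x}=a^*\Bigr\}\in(0,+\infty].
\]
If $x^*=+\infty$ then $f\in\F$, $\M\M f=f$, and the inequality is trivial since $m>1$. Otherwise, the function $F_0(x)=f(x)$ for $|x|\le x^*$ and $F_0(x)=a^*|x|$ for $|x|>x^*$ is convex (the left slope $f'(x^{*-})\le a^*$ because $f/x$ is non-increasing on $(0,x^*]$), even, positive, and $F_0(x)/|x|$ is non-increasing, so $F_0\in\F$; also $F_0\le f$. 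Any other $F\in\F$ with $F\le f$ satisfies $F(x)/|x|\le F(x^*)/x^*\le a^*$ for $|x|\ge x^*$ and $F\le f=F_0$ on $[-x^*,x^*]$, hence $F\le F_0$. Therefore $\M\M f=F_0$.

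With this description and the identity $a^*x^*=f(x^*)$, direct integration gives
\[
\int_0^{+\infty}\frac{dx}{(\M\M f)^m}=\int_0^{x^*}\frac{dx}{f^m}+\frac{x^*\,f(x^*)^{-m}}{m-1},
\]
so that
\[
\int_\R\frac{dx}{f^m}-\frac{m-1}{m}\int_\R\frac{dx}{(\M\M f)^m}=\frac{2}{m}\left(\int_0^{x^*}\frac{dx}{f^m}-x^*\,f(x^*)^{-m}\right)+2\int_{x^*}^{+\infty}\frac{dx}{f^m}.
\]
Both right-hand terms are non-negative: the second trivially, and the first because $f$ is non-decreasing on $[0,+\infty)$, so $f(x)\le f(x^*)$ on $[0,x^*]$ and $\int_0^{x^*}f^{-m}\ge x^*\,f(x^*)^{-m}$. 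This proves the inequality.

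Equality forces both terms to vanish: vanishing of the second forces $f=+\infty$ on $(x^*,+\infty)$, while vanishing of the first (using that $f\le f(x^*)$ and lsc) forces $f\equiv f(x^*)$ on $[0,x^*]$. By evenness, $f=c+I_{[-\alpha,\alpha]}$ with $c=f(x^*)$ and $\alpha=x^*$. The main subtlety is the explicit identification $\M\M f=F_0$, where one must confirm convexity of $F_0$ at $x^*$ in all subcases (interior vs.\ boundary minimizer of $f/x$, bounded vs.\ unbounded $\dom f$, or a plateau of $f/x$ at its minimum); once this is verified everything reduces to the one-sided bound $\int_0^{x^*}f^{-m}\ge x^*\,f(x^*)^{-m}$.
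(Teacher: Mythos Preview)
Your proof is correct and takes a genuinely different, more elementary route than the paper. The paper translates the inequality into a geometric statement about the body $C_1(f)=\{(x,t):|t|f(x/|t|)\le1\}\subset\R^2$: writing $\int f^{-m}=\mu_m(C_1(f))$ for the measure with density $\frac{m}{2}|t|^{m-2}$ and using $C_1(\M\M f)=\conv(C_1(f))$, it reduces \eqref{eq:f-mmf-dim1} to $\mu_m(C)\ge\frac{m-1}{m}\mu_m(\conv C)$ for the bow-tie set $C=K\cup\sigma(K)$, which is then further reduced by a comparison argument to the case where $K$ is a triangle and computed explicitly. Your argument bypasses the planar geometry entirely by identifying $\M\M f$ as the explicit function $F_0$ (the largest minorant of $f$ in $\F$, obtained by grafting the asymptotic line $a^*|x|$ onto $f$ beyond $x^*$); after that identification the whole inequality collapses to the monotonicity bound $\int_0^{x^*}f^{-m}\ge x^*f(x^*)^{-m}$ plus a non-negative tail term, with the equality case read off immediately. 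Your approach is shorter and more transparent in dimension~$1$, and your equality analysis is cleaner; the paper's approach, on the other hand, fits the convex-body framework $C_m^K(f)$ used throughout the article and is the one that suggests how to attack the higher-dimensional Conjecture~\ref{comp-bipolar}. The edge cases you flag (infimum of $f/x$ attained at the boundary of $\dom f$, or a plateau) are genuine but routine: once one notes that $a^*$ is achieved whenever $x^*<+\infty$ (since $f/x\to+\infty$ at both ends of the domain unless $\dom f=\R$ and $f/x$ is monotone, which is exactly the case $f\in\F$), the check $f'_-(x^*)\le a^*$ follows from $g'_-(x^*)\le0$ at the leftmost minimizer of $g=f/x$.
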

Multiplying equation \eqref{eq:f-mmf-dim1} by $\int(\M f)^{-m}$ we deduce that for any even convex $f$ tending to $+\infty$ at infinity, one has 
\begin{equation}\label{eq:Q-m-f-mmf}
Q_m(f)\ge \frac{m-1}{m} Q_m(\M f),
\end{equation}
with equality if and only if $f=c+I_{[-\alpha,\alpha]}$, for some $c,\alpha>0$.
Since $\M f\in\F$, using \cite{FGSZ} we get
\begin{equation}\label{fgsz-dim1}
Q_m(\M f)\ge Q_m\left(1+|x|\right)=\frac{4m}{(m-1)^2}.
\end{equation}
Using equations \eqref{eq:Q-m-f-mmf} and \eqref{fgsz-dim1}, we conclude the proof of Theorem~\ref{th:mahler-sneg}.
\begin{proof}[\bf{Proof of Theorem \ref{th:f-mmf-dim1}}]
Let us define a variant of the convex body $C_{m}^{K}$ introduced in section 2, which is more suitable for $m=1/|s|\notin\N$, this variant was considered in \cite{IW} for $s>0$:
\[
C_1(f)=C^{[-1,1]}_{1}(f)=\left\{(x,t)\in\R\times\R; \pi_1(f)(x,t)\leq 1\right\}=\left\{(x,t)\in\R\times\R; |t|f\left(\frac{x}{|t|}\right)\leq 1\right\}.
\]
The same computations as before show easily that $C_1(f)^\circ=C_1(\M f)$. Moreover, for any $m>1$, one has 
\[
\int_{\R}\frac{1}{f(x)^m}dx=\int_{\R}\int_{f(x)}^{+\infty}\frac{m}{t^{m+1}}dtdx=\int_0^{+\infty}\frac{m}{t^{m+1}}|\{x\in\R; f(x)\le t\}|dt.
\]
Using the change of variables $t=1/u$, we get 
\[
\int_{\R}\frac{1}{f(x)^m}dx=\frac{m}{2}\int_{C_1(f)}|u|^{m-2}dudx=\mu_m(C_1(f)),
\]
where we denote by $\mu_m$ the measure with density $d\mu_m(x,t)=\frac{m}{2}|t|^{m-2}dxdt$ on $\R^{2}$.
\forget
Consider the convex body in $\R^{2}$
\[
C_{1}(f)=\{(x,s)\in\R^{2}; |s|f\left(\frac{x}{|s|}\right)\leq 1\}.
\]
Since $f$ is even, then $C_{1}(f)$ is an unconditional convex body. In addition, one has 
\begin{eqnarray*}
    \int_{C_{1}(f)}|s|^{m-2}dsdx&=&2\int_{0}^{+\infty}s^{m-2}\left|\left\{x\in\R; \, (x,s)\in C_{1}(f)\right\}\right|ds \\
    &=&2\int_{0}^{+\infty}s^{m-2}\left|\{x\in\R; \, sf\left(\frac{x}{s}\right)\leq 1\}\right|ds\\
    &=& 2\int_{0}^{+\infty}s^{m-2}\left|\{sy\in\R; \, sf(y)\leq 1\}\right|ds\\
    &=& 2\int_{0}^{+\infty}s^{m-1}\left|\{y\in\R; \, sf(y)\leq 1\}\right|ds\\
    &=&2\int_{0}^{+\infty}\frac{1}{u^{m+1}}\left|\{y\in\R; \, f(y)\leq u\}\right|du.
\end{eqnarray*}
Now using Fubini, we obtain
\begin{eqnarray*}
    \int_{C_{1}(f)}|s|^{m-2}dsdx&=&2\int_{0}^{+\infty}\frac{1}{u^{m+1}}\int_{\{y\in\R; \, f(y)\leq u\}}dy du\\
    &=&2\int_{\R}\int_{f(y)}^{+\infty}\frac{1}{u^{m+1}}dudy\\
    &=&2\int_{\R}\left[\frac{u^{-m}}{-m}\right]_{f(y)}^{+\infty}dy\\
    &=&\frac{2}{m}\int_{\R}\frac{dy}{(f(y))^{m}}.
\end{eqnarray*}
Thus,

\[
\int_{\R}\frac{dy}{(f(y))^{m}}dy=\frac{m}{2} \int_{C_{1}(f)}|s|^{m-2}dsdx.
\]
\forgotten
Applied to $\M\M f$ instead of $f$, we get 
\[
\int_{\R}\frac{dz}{(\M \M f(z))^{m}}=\frac{m}{2}\int_{C_{1}(\M \M f)}|s|^{m-2}dsdx=\mu_{m}(C_{1}(\M\M f)).
\]
Since $C_{1}(\M \M f)=C_{1}(\M f)^{\circ}=C_{1}(f)^{\circ\circ}=\conv(C_{1}(f))$, we get
\[
\int_{\R}\frac{dz}{(\M \M f(z))^{m}}=\frac{m}{2}\int_{\conv(C_{1}(f))}|s|^{m-2}dsdx=\mu_m(\conv(C_{1}(f))).
\]
The inequality \eqref{eq:f-mmf-dim1} that we want to prove is thus equivalent to
\begin{equation}\label{eq:muC1fconv}
\mu_{m}(C_{1}(f))\geq\frac{m-1}{m}\mu_{m}\left(\conv(C_{1}(f))\right).
\end{equation}
It was proved in \cite[Theorem 2.16]{FGSZ} that $C_{1}(f)$ can be written as $C_{1}(f)=C_{1}(f)_{+}\cup C_{1}(f)_{-}$, where 
\[
C_{1}(f)_{+}=C_1(f)\cap\{s\ge0\} =\overline{\left\{(x,s)\in\R\times(0,+\infty); sf\left(\frac{x}{s}\right)\leq 1\right\}}
\]
is a convex body containing $0$ on its boundary and $C_{1}(f)_{-}$ is its symmetric image with respect to the hyperplane $\{s=0\}$. Thus, our problem can be formulated as follows:\\
for every convex body $K\subset\{(x,s)\in\R^{2}; s\geq 0\}$ such that $0\in\partial K$ and $K$ symmetric with respect to $\R e_{2}$ and if we denote $C=K\cup\sigma(K)$, where $\sigma=\sigma_{{e_{2}}^{\perp}}$ then 
\begin{equation}\label{eq:muC-muconvC}
\mu_{m}(C)\geq\frac{m-1}{m}\mu_{m}(\conv(C)).
\end{equation}
Let $a\in\R$ such that $P_{e_{2}^{\perp}}(K)=[-a,a]$, then there exists $b\geq 0$ such that $(a,b)\in K$. Set
\[
S=K\cap\{s=b\}=[-a,a]\times\{b\},\quad K_{1}=\conv(S,0),\quad  \mbox{and}\quad C_1=K_1\cup\sigma(K_1).
\]
Then, one has $C_1\subset C$ and one easily sees that $\conv(C)\setminus C\subset \conv(C_1)\setminus C_1$, thus 
\begin{equation}\label{eq:mu-C-C_1}
\frac{\mu_m(\conv(C)\setminus C)}{\mu_m(C)}\le \frac{\mu_m(\conv(C_1)\setminus C_1)}{\mu_m(C_1)}.
\end{equation}
\forget
\[
A=K\cap\{s\geq b\},\quad\quad\quad S=K\cap\{s=b\}\quad\mbox{ and }\quad K_{1}=\conv(A,0).
\]
It is easy to see that $K_{1}\subset K$ and $K_{1}=A\cup\conv(0,S)$. Since $\conv(K,\sigma(K))=A\cup\sigma(A)\cup\conv(S,\sigma(S))$, then $\conv(K_{1},\sigma(K_{1}))=\conv(K,\sigma(K))$.\\
Let $L$ be a symmetric convex body in $\R^{2}$ and $F$ be the function defined as follows:
\[
F(L)=\frac{\mu(L\cup\sigma(L))}{\mu(\conv(L,\sigma(L)))}.
\]
From the previous observation, one has $F(K)\geq F(K_{1})$.\\
Set $K_{2}=\conv(0,S)$. Then, $\conv(K_{2},\sigma(K_{2}))=\conv(S,\sigma(S))$. In addition, one has
\[
\mu(K_{1}\cup\sigma(K_{1}))=2\mu(K_{1})=2(\mu(A)+\mu(\conv(0,S)))=2(\mu(A)+\mu(K_{2})).
\]
On the other hand, using again the fact that $\conv(K_{1},\sigma(K_{1}))=A\cup\sigma(A)\cup\conv(S,\sigma(S))$, it implies that
\[
\mu_m(\conv(K_{1},\sigma(K_{1})))=2\mu_m(A)+\mu_m(\conv(K_{2},\sigma(K_{2}))).
\]
Thus,
\[
F(K_{1})=\frac{2\mu(A)+2\mu(K_{2})}{2\mu(A)+\mu(\conv(K_{2},\sigma(K_{2})))}\geq\frac{2\mu(K_{2})}{\mu(\conv(K_{2},\sigma(K_{2})))}=F(K_{2}).
\]
Our final step is to prove that $F(K_{2})\geq\frac{m-1}{m}$.
\forgotten
The triangle $K_1$ with vertices $0$, $(a,b)$ and $(-a,b)$ can be described as
\[
K_{1}=\{(x,s)\in\R^{2};\frac{b}{a}|x|\leq s\leq b\}.
\]
Hence, we get
\[
\mu_m(C_1)=2\mu_{m}(K_{1})=4\int_{0}^{a}\int_{\frac{b}{a}x}^{b}s^{m-2}dsdx=\frac{4}{m}ab^{m-1}.
\]
And since $\conv(C_1)=[-a,a]\times[-b,b]$, we get
\[
\mu_{m}(\conv(C_{1}))=4\int_{0}^{a}\int_{0}^{b}s^{m-2}dsdx=\frac{4}{m-1}ab^{m-1}.
\]
Hence 
\[
\frac{\mu_m(\conv(C_1)\setminus C_1)}{\mu_m(C_1)}=\frac{\frac{1}{m-1}-\frac{1}{m}}{\frac{1}{m}}=\frac{1}{m-1}.
\]
Using \eqref{eq:mu-C-C_1}, we deduce that \eqref{eq:muC-muconvC} holds. Moreover, there is equality in \eqref{eq:mu-C-C_1} if and only if $C=C_1$, i.e. $K=K_1$, which means that $f=\frac{1}{b}+I_{[-\frac{a}{b},\frac{a}{b}]}$. Thus, the equality case follows.
\forget
Then, $F(K_{2})=\frac{m-1}{m}$ and $F(K)\geq\frac{m-1}{m}$.\\
To deduce our inequality, we will use the observations done in the previous proof. In fact, we have
\begin{eqnarray*}
\int_{\R}\frac{1}{f(x)^m}dx\int_{\R}\frac{1}{(\M f(y))^{m}}dy&=&\left(\frac{m}{m-1}\right)^{2}\frac{1}{|B_{\infty}^{m-1}||B_{1}^{m-1}|}|(K\cup\sigma(K))||(K\cup\sigma(K))^{\circ}|\\
&=&\left(\frac{m}{m-1}\right)^{2}\frac{(m-1)!}{4^{m-1}}|(K\cup\sigma(K))||(K\cup\sigma(K))^{\circ}|\\
&=&\left(\frac{m}{m-1}\right)^{2}\frac{(m-1)!}{4^{m-1}}\frac{|K\cup\sigma(K)|}{|\conv(K,\sigma(K))|}|\conv(K,\sigma(K))||(K\cup\sigma(K))^{\circ}|
\end{eqnarray*}
Using Mahler's inequality in dimension $m$, one has 
\[
|\conv(K,\sigma(K))||(K\cup\sigma(K))^{\circ}|\geq\frac{4^{m}}{m!}.
\]
Thus, we get
\begin{eqnarray*}
 \int_{\R}\frac{1}{f(x)^m}dx\int_{\R}\frac{1}{(\M f(y))^{m}}dy&\geq&\left(\frac{m}{m-1}\right)^{2}\frac{(m-1)!}{4^{m-1}}\left(\frac{m-1}{m}\right)\frac{4^{m}}{m!} \\
 &=&\frac{4}{m-1},
\end{eqnarray*}
and the result follows.
\forgotten
\end{proof}
\forget
\section{Open problems}

For even $s$-concave function $g$ on $\R^n$.

Case $n\ge2$ and $1/|s|\notin\N$. 

Case $n\ge2$, $s<0$ and $f\notin\F$: proof of Theorem 4.3

equality cases. 

We simplify the notation and denote:
\[
C_1(f)=C^{[-1,1]}_{1}(f)=\{(x,s)\in\R\times\R; \pi_1(f)(x,s)\leq 1\}=\{(x,s)\in\R\times\R; |s|f\left(\frac{x}{|s|}\right)\leq 1\}.
\]
We know that $C_1(f)^\circ=C_1(\M f)$. Moreover, for any $m>n$, one has 
\[
\int_{\R^n}\frac{1}{f(x)^m}dx=m\int_{\R^n}\int_{f(x)}^{+\infty}\frac{m}{s^{m+1}}dsdx.
\]
Using the change of variables $s=1/t$ and $x=y/t$, we get 
\[
\int_{\R^n}\frac{1}{f(x)^m}dx=\frac{m}{2}\int_{C_1(f)}|t|^{m-n-1}dtdx=\frac{m}{2}\mu_m(C_1(f)),
\]
where we denote by $\mu_m$ the measure on $\R^n\times\R$ with density $|t|^{m-n-1}$, where $t$ denotes the last coordinate.\\
\forgotten

\noindent {\bf Acknowledgments:}  The authors  are grateful to the anonymous referee for a careful reading of the paper and constructive comments and corrections and they also thank Nathaël Gozlan for his remarks and pertinent questions on this work.

\noindent
{\footnotesize\sc Matthieu Fradelizi:}
  {\footnotesize Univ Gustave Eiffel, Univ Paris Est Creteil, CNRS, LAMA UMR8050 F-77447 Marne-la-Vallée, France. \\
  ORCID: 0000-0001-9362-6819}\\[-1.3mm]
  {\footnotesize e-mail: {\tt matthieu.fradelizi@univ-eiffel.fr \tt }}\\

\noindent
{\footnotesize\sc Elie Nakhle:}
  {\footnotesize Univ Paris Est Creteil, Univ Gustave Eiffel, CNRS, LAMA UMR8050, F-94010 Creteil, France. }\\[-1.3mm]
  {\footnotesize e-mail: {\tt elie\_b\_nakhle@hotmail.com \tt }}\\

\end{document}